\newcommand{\nc}{\newcommand}
\nc{\exto}[1]{\stackrel{#1}{\longrightarrow}}
\nc{\dlim}{{\mathop{\lim\limits_{\longrightarrow}\,}}}
\nc{\ilim}{{\mathop{\lim\limits_{\longleftarrow}\,}}}
\nc{\hocolim}{{\mathop{\sf hocolim}\,}}
\nc{\holim}{{\mathop{\sf holim}}}
\nc{\lan}{\big\langle}
\nc{\ran}{\big\rangle}
\nc{\kk}{{\mathsf{k}}}
\nc{\C}{{\mathbb{C}}}
\nc{\HH}{{\mathbf{H}}}
\nc{\DD}{{\mathbb{D}}}
\nc{\LL}{{\mathbb{L}}}
\nc{\PP}{{\mathbb{P}}}
\nc{\QQ}{{\mathbb{Q}}}
\nc{\RR}{{\mathbb{R}}}
\nc{\ZZ}{{\mathbb{Z}}}
\nc{\CA}{{\mathcal{A}}}
\nc{\CB}{{\mathcal{B}}}
\nc{\CC}{{\mathcal{C}}}
\nc{\D}{{\mathcal{D}}}
\nc{\CE}{{\mathcal{E}}}
\nc{\CF}{{\mathcal{F}}}
\nc{\CG}{{\mathcal{G}}}
\nc{\CH}{{\mathcal{H}}}
\nc{\CL}{{\mathcal{L}}}
\nc{\CM}{{\mathcal{M}}}
\nc{\CN}{{\mathcal{N}}}
\nc{\CO}{{\mathcal{O}}}
\nc{\CQ}{{\mathcal{Q}}}
\nc{\CR}{{\mathcal{R}}}
\nc{\CS}{{\mathcal{S}}}
\nc{\CT}{{\mathcal{T}}}
\nc{\CU}{{\mathcal{U}}}
\nc{\CV}{{\mathcal{V}}}
\nc{\CW}{{\mathcal{W}}}
\nc{\CX}{{\mathcal{X}}}
\nc{\CY}{{\mathcal{Y}}}
\nc{\CMo}{{\mathcal{M}^\circ}}
\nc{\Co}{{{C}^\circ}}
\nc{\BY}{{\overline{Y}}}
\nc{\BYD}{{\overline{Y}{}^{|D|}}}
\nc{\OZ}{{\overline{Z}}}
\nc{\bg}{{\bar{g}}}
\nc{\bq}{{\mathbf{q}}}
\nc{\BD}{{\mathbf{D}}}
\nc{\BG}{{\mathbf{G}}}
\nc{\BM}{{\mathbf{M}}}
\nc{\BP}{{\mathbf{P}}}
\nc{\BZ}{{\mathbf{Z}}}
\nc{\BPr}{{\mathsf{P}}}
\nc{\BL}{{\mathbf{L}}}
\nc{\BR}{{\mathbf{R}}}
\nc{\BRO}[1]{{{\mathbf{R}}^{\circ}_{#1}}}
\nc{\BRD}[1]{{{\mathbf{R}}^{|D|}_{#1}}}
\nc{\BRP}[1]{{{\mathbf{R}}^{1}_{#1}}}
\nc{\BRTP}[1]{{{\mathbf{\tilde{R}}}{}^{1}_{#1}}}
\nc{\BS}{{\mathbf{S}}}
\nc{\BMS}{{{\mathbf{M}}^{{s}}}}
\nc{\BMSS}{{{\mathbf{M}}^{{ss}}}}
\nc{\BMZ}{{\mathbf{M}^{\circ}}}
\nc{\BCL}{{\mathbf{L}}}
\nc{\PCC}{{{}^\perp\CC}}
\nc{\Cl}{{\mathsf{Cliff}}}
\nc{\Clev}{{\mathop{\mathsf{Cliff}}^{\circ}}}
\nc{\FA}{{\mathfrak{A}}}
\nc{\FB}{{\mathfrak{B}}}
\nc{\fa}{{\mathfrak{a}}}
\nc{\fb}{{\mathfrak{b}}}
\nc{\fg}{{\mathfrak{g}}}
\nc{\fn}{{\mathfrak{n}}}
\nc{\fp}{{\mathfrak{p}}}
\nc{\FD}{{\mathfrak{D}}}
\nc{\FE}{{\mathfrak{E}}}
\nc{\FL}{{\mathfrak{L}}}
\nc{\FM}{{\mathfrak{M}}}
\nc{\FS}{{\mathsf{S}}}
\nc{\sfc}{{\mathsf{c}}}
\nc{\sfch}{{\mathsf{ch}}}
\nc{\sfh}{{\mathsf{h}}}
\nc{\SK}{{\mathsf{K}}}
\nc{\SM}{{\mathsf{M}}}
\nc{\SO}{{\mathsf{O}}}
\nc{\SQ}{{\mathsf{Q}}}
\nc{\SPV}{{\mathsf{S}^+\mathsf{V}}}
\nc{\SMV}{{\mathsf{S}^-\mathsf{V}}}
\nc{\SPMV}{{\mathsf{S}^\pm\mathsf{V}}}
\nc{\SX}{{S_X}}
\nc{\SY}{{S_Y}}
\nc{\phipsi}{{q}}
\nc{\eps}{\varepsilon}
\nc{\pim}{{\pi_-}}
\nc{\pip}{{\pi_+}}
\nc{\BE}{{\overline{\CE}}}
\nc{\TE}{{\tilde{\CE}}}
\nc{\TQ}{{\tilde{Q}}}
\nc{\TCF}{{\tilde{\CF}}}
\nc{\TCG}{{\tilde{\CG}}}
\nc{\TCL}{{\tilde{\CL}}}
\nc{\TF}{{\tilde{F}}}
\nc{\TW}{{\tilde{W}}}
\nc{\TCC}{{\tilde{\CC}}}
\nc{\TCX}{{\tilde{\CX}}}
\nc{\TCY}{{\tilde{\CY}}}
\nc{\TPhi}{{\tilde{\Phi}}}
\nc{\OPhi}{{\bar{\Phi}}}
\nc{\txi}{{\tilde{\xi}}}
\nc{\tp}{{\tilde{p}}}
\nc{\tq}{{\tilde{q}}}
\nc{\tzeta}{{\tilde{\zeta}}}
\nc{\tpi}{{\tilde{\pi}}}
\nc{\halpha}{{\hat{\alpha}}}
\nc{\HCA}{{\hat{\CA}}}
\nc{\HCB}{{\hat{\CB}}}
\nc{\HCC}{{\hat{\CC}}}
\nc{\HE}{{\widehat{\CE}}}
\nc{\HX}{{\hat{X}}}
\nc{\hxi}{{\hat{\xi}}}
\nc{\UH}{{\mathcal{H}}}
\nc{\TM}{{\widetilde{M}}}
\nc{\TCM}{{\widetilde{\CM}}}
\nc{\TU}{{\widetilde{U}}}
\nc{\TX}{{\widetilde{X}}}
\nc{\TY}{{\widetilde{Y}}}
\nc{\TYO}{{{\widetilde{Y}}^\circ}}
\nc{\barf}{{\bar{f}}}
\nc{\te}{{\tilde{e}}{}}
\nc{\tf}{{\tilde{f}}}
\nc{\tg}{{\tilde{g}}}
\nc{\ti}{{\tilde{\imath}}}
\nc{\tj}{{\tilde{\jmath}}}
\nc{\ty}{{\tilde{y}}}
\nc{\tphi}{{\tilde{\phi}}}
\nc{\urho}{{\underline{\rho}}}
\nc{\LRA}{\Leftrightarrow}
\nc{\RA}{\Rightarrow}
\nc{\lotimes}{\mathbin{\mathop{\otimes}\limits^{\mathbb{L}}}}
\nc{\CEnd}{\mathop{\mathcal{E}\mathit{nd}}\nolimits}
\nc{\CExt}{\mathop{\mathcal{E}\mathit{xt}}\nolimits}
\nc{\CHom}{\mathop{\mathcal{H}\mathit{om}}\nolimits}
\nc{\RH}{\mathop{{\mathsf{R}}\Gamma}\nolimits}
\nc{\RGamma}{\mathop{{\mathsf{R}}\Gamma}\nolimits}
\nc{\RHom}{\mathop{\mathsf{RHom}}\nolimits}
\nc{\RCHom}{\mathop{\mathsf{R}\mathcal{H}\mathit{om}}\nolimits}
\nc{\RG}{\mathop{\mathsf{R\Gamma}}\nolimits}
\nc{\Hom}{\mathop{\mathsf{Hom}}\nolimits}
\nc{\Ext}{\mathop{\mathsf{Ext}}\nolimits}
\nc{\End}{\mathop{\mathsf{End}}\nolimits}
\nc{\Tor}{\mathop{\mathsf{Tor}}\nolimits}
\nc{\Tordim}{\mathop{\mathsf{Tor}\text{\rm-}\mathsf{dim}}\nolimits}
\nc{\Hilb}{\mathop{\mathsf{Hilb}}\nolimits}
\nc{\Spec}{\mathop{\mathsf{Spec}}\nolimits}
\nc{\Pic}{\mathop{\mathsf{Pic}}\nolimits}
\nc{\Tr}{\mathop{\mathsf{Tr}}\nolimits}
\nc{\Cone}{\mathop{\mathsf{Cone}}\nolimits}
\nc{\Fiber}{\mathop{\mathsf{Fiber}}\nolimits}
\nc{\Ker}{\mathop{\mathsf{Ker}}\nolimits}
\nc{\Coker}{\mathop{\mathsf{Coker}}\nolimits}
\nc{\codim}{\mathop{\mathsf{codim}}\nolimits}
\nc{\sing}{{\mathsf{sing}}}
\nc{\supp}{\mathop{\mathsf{supp}}}
\nc{\vol}{\mathop{\mathsf{vol}}\nolimits}
\nc{\ch}{\mathop{\mathsf{ch}}\nolimits}
\nc{\perf}{{\mathsf{perf}}}
\nc{\rank}{\mathop{\mathsf{rank}}}
\nc{\Pf}{{\mathsf{Pf}}}
\nc{\Gr}{{\mathsf{Gr}}}
\nc{\OGr}{{\mathsf{OGr}}}
\nc{\Flag}{{\mathsf{Fl}}}
\nc{\Kosz}{{\mathsf{Kosz}}}
\nc{\LGr}{{\mathsf{LGr}}}
\nc{\GTGr}{{\mathsf{G_2Gr}}}
\nc{\GTF}{{\mathsf{G_2F}}}
\nc{\OF}{{\mathsf{OF}}}
\nc{\Fl}{{\mathsf{Fl}}}
\nc{\Bl}{{\mathsf{Bl}}}
\nc{\GL}{{\mathsf{GL}}}
\nc{\PGL}{{\mathsf{PGL}}}
\nc{\SL}{{\mathsf{SL}}}
\nc{\SP}{{\mathsf{Sp}}}
\nc{\Spin}{{\mathsf{Spin}}}
\nc{\Tot}{{\mathsf{Tot}}}
\nc{\ev}{{\mathsf{ev}}}
\nc{\od}{{\mathsf{odd}}}
\nc{\coev}{{\mathsf{coev}}}
\nc{\id}{{\mathsf{id}}}
\nc{\opp}{{\mathsf{opp}}}
\nc{\PS}{{{\PP^3}}}
\nc{\Qu}{{{Q^3}}}
\nc{\tdim}{\mathop{\Tor\dim}}
\nc{\ecart}{{\fbox{$\scriptstyle\mathsf{EC}$}}}
\nc{\ad}{{\mathop{\mathsf ad}}}
\nc{\sg}{{\mathop{\mathsf sg}}}
\nc{\hf}{{\mathop{\mathsf hf}}}
\nc{\gr}{{\mathop{\mathsf gr}}}
\nc{\qgr}{{\mathop{\mathsf qgr}}}
\nc{\Coh}{{\mathop{\mathsf Coh}}}
\nc{\Ab}{{\mathop{\mathcal{A}\mathit{b}}}}
\nc{\Ccoh}{{\mathop{\mathsf Ccoh}}}
\nc{\Qcoh}{{\mathop{\mathsf Qcoh}}}
\nc{\At}{{\mathop{\mathsf{At}}\nolimits}}
\nc{\tra}{{\mathsf{T}}}
\nc{\fsl}{{\mathfrak{sl}}}
\nc{\fso}{{\mathfrak{so}}}
\nc{\fgl}{{\mathfrak{gl}}}
\nc{\AAV}{{\mathcal{AAV}}}
\nc{\Rep}{{\mathsf{Rep}}}
\nc{\Cubics}{{{\mathcal{S}}_3}}
\nc{\VFT}{{{\mathcal{S}}_{14}}}
\nc{\VFTE}{{{\mathcal{N}}_{\mathrm{reg,sm}}}}
\nc{\MX}{{\CM_X}}
\nc{\MY}{{\CM_Y}}
\nc{\MYE}{{\CM_{Y,\CE}}}
\nc{\Yd}{{Y_d}}
\nc{\Yfive}{{Y_5}}
\nc{\Xg}{{X_{2g-2}}}
\nc{\Xtt}{{X_{22}}}
\nc{\Xst}{{X_{16}}}
\nc{\Xtw}{{X_{12}}}
\nc{\Xe}{{X_{8}}}
\nc{\Xf}{{X_{4}}}
\nc{\git}{{/\!\!/\!{}_\chi}}
\nc{\HOH}{{\mathsf H\mathsf H}}
\nc{\HHE}{{\mathsf H\mathsf E}}
\nc{\You}{{\mathsf{Y}}}
\nc{\Youu}{\mathsf{Y}^\mathrm{u}}
\nc{\Youmu}{\mathsf{Y}^\mathrm{mu}}
\nc{\Youl}{\mathsf{Y}^\mathrm{l}}
\nc{\Youml}{\mathsf{Y}^\mathrm{ml}}
\nc{\CBp}{\CB^\prime}
\nc{\lort}[1]{\vphantom{#1}^\perp{}#1}
\theoremstyle{plain}
\newtheorem{theorem}{Theorem}[section]
\newtheorem{conjecture}[theorem]{Conjecture}
\newtheorem{lemma}[theorem]{Lemma}
\newtheorem{proposition}[theorem]{Proposition}
\newtheorem{corollary}[theorem]{Corollary}
\theoremstyle{definition}
\newtheorem{definition}[theorem]{Definition}
\theoremstyle{remark}
\newtheorem{remark}[theorem]{Remark}
\newtheorem{example}[theorem]{Example}
\title{On minimal Lefschetz decompositions for Grassmannians}
\author{Anton Fonarev}
\address{
	\sloppy\parbox{0.9\textwidth}{
		Algebra Section, Steklov Mathematical Institute,
		8 Gubkina str., Moscow 119991 Russia
		\hfill\\[1pt]
		Laboratory of Algebraic Geometry, SU-HSE,
		7 Vavilova Str., Moscow 117312 Russia
		\hfill
	}\bigskip
}
\email{avfonarev@mi.ras.ru}
\date{}
\dedicatory{Dedicated to the blessed memory of my Grandfathers.}
\thanks{The author was partially supported by AG Laboratory NRU-HSE, RF government grant, ag. 11.G34.31.0023, RFFI grants 11-01-92613-KO-a, 10-01-00678-a, NSh-5139.2012.1 and by the Moebius Contest Foundation for Young Scientists.}
\begin{document}

\begin{abstract}
	We construct two Lefschetz decompositions of the derived category of coherent sheaves on the Grassmannian of $k$-dimensional subspaces in a vector space of
	dimension $n$. Both of them admit a Lefschetz basis consisting of equivariant vector bundles. We prove fullness of the first decomposition and conjecture it
	for the second one. In the case when $n$ and $k$ are coprime these decompositions coincide and are minimal. In general, we conjecture minimality of
	the second decomposition.
\end{abstract}

\maketitle

\section{Introduction} 
\label{sec:introduction}

The derived category of coherent sheaves is one of the most important invariants of an algebraic variety. In order to study derived categories one may look for
additional structure. There is a particular case when a triangulated category (and any derived category is triangulated) can be described explicitly.

\begin{definition}[\cite{BondalRepr}, \cite{GorRudProj}]
	A collection of objects $(E_1,E_2,\ldots,E_n)$ in a $\kk$-linear triangulated category $\CT$ is called \textbf{exceptional} if
	\[
		\RHom(E_i,E_i)=\kk\quad\text{for all } i,\qquad \RHom(E_i,E_j)=0\quad\text{for }i>j.
	\]
	An exceptional collection $(E_1,E_2,\ldots,E_n)$ is called \textbf{full} if $\CT$ is the smallest full triangulated subcategory of $\CT$ containing all $E_i$.
\end{definition}

Exceptional collections can be considered as a kind of basis for the triangulated category: in this situation every object admits a unique filtration with the $i$-th
quotient isomorphic to a direct sum of shifts of the $i$-th object in the collection.

The simplest example of a variety with a full exceptional collection is a projective space. A.\,Beilinson showed in~\cite{Beilinson} that the collection
$\left(\CO_{\PP^n},\CO_{\PP^n}(1),\ldots\CO_{\PP^n}(n)\right)$ is full and exceptional in $\D^b(\PP^n)$.

One can slightly generalize the notion of an exceptional collection and consider  subcategories instead of single objects. For a full triangulated subcategory
$\CA$ of a triangulated category $\CT$ the \textbf{right orthogonal} (resp. \textbf{left orthogonal}) to $\CA$ in $\CT$ is the full triangulated subcategory
$\CA^\perp$ (resp. $\lort{\CA}$) consisting of all the objects $T\in\CT$ such that $\Hom_{\CT}(A,T) = 0$ (resp. $\Hom_{\CT}(T, A)=0$) for all $A\in\CA$.

Given a sequence of full triangulated subcategories $\CA_1,\CA_2,\ldots,\CA_n\subset \CT$ we denote by $\left<\CA_1,\CA_2,\ldots,\CA_n\right>$ the smallest full triangulated
subcategory of $\CT$ containing $\CA_1,\CA_2,\ldots,\CA_n$.

\begin{definition}[\cite{BondalKapranov}]
	A sequence $\CA_1,\CA_2,\ldots,\CA_n$ of full triangulated subcategories in a triangulated category $\CT$ is called a \textbf{semi-orthogonal collection} if
	$\CA_i\subset\lort{\CA_j}$ for all $i>j$. A semi-orthogonal collection $\CA_1,\CA_2,\ldots,\CA_n$ is called a \textbf{semi-orthogonal decomposition} if
	$\CT=\left<\CA_1,\CA_2,\ldots,\CA_n\right>$.
\end{definition}

There is a very interesting class of semi-orthogonal decompositions in the case of derived categories. These are called Lefschetz and are of the form
\[
	\D^b(X)=\left<\CB_0,\CB_1\otimes L,\ldots,\CB_{m-1}\otimes L^{m-1}\right>,
\]
where $\D^b(X)\supset \CB_0\supset\ldots\supset \CB_{m-1}$ are full triangulated subcategories and $L$ is a fixed line bundle.

It turns out that every Lefschetz decomposition is completely determined by its first block $\CB_0$. Therefore, there is a natural partial order on the set of
all Lefschetz decompositions induced by the inclusion order on the set of their first blocks. An important problem is to
construct Lefschetz decompositions that are minimal with respect to this partial order.

Minimal Lefschetz decompositions are interesting due to several reasons. One of them is the following conjecture by A.\,Kuznetsov.

\begin{conjecture}[\cite{KuznResol}]
	Let $X$ be a smooth projective variety. Then minimal Lefschetz decompositions of $\D^b(X)$ correspond to minimal categorical resolutions of singularities of
	the affine cone over~$X$.
\end{conjecture}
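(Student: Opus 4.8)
The final statement is Kuznetsov's conjecture, which is open; what follows is therefore a research program rather than a complete argument. The plan is to make both sides of the proposed correspondence precise and then to construct it explicitly using the passage from Lefschetz decompositions to categorical resolutions of~\cite{KuznResol}. Fix the embedding $X \hookrightarrow \PP(V)$ afforded by $L = \CO_X(1)$ and let $\hat X$ be the affine cone, equivalently work with the projective cone $\overline{C}(X)$ and its resolution $\Tot(L^{-1}) \to \overline{C}(X)$. First I would record the assignment sending a Lefschetz decomposition $\D^b(X) = \langle \CB_0, \CB_1 \otimes L, \ldots, \CB_{m-1}\otimes L^{m-1}\rangle$ to the triangulated category $\D$ glued, in the sense of~\cite{KuznResol}, from $\D^b(\Tot(L^{-1}))$ and the blocks $\CB_i$ placed along the zero section, together with its structure functor $\D \to \D^b(\overline{C}(X))$; checking that this is a well-defined categorical resolution is the first step.

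Second, one must pin down the partial order on categorical resolutions. On the Lefschetz side the order is inclusion of first blocks $\CB_0 \subset \CB_0'$, which is meaningful precisely because a Lefschetz decomposition is determined by $\CB_0$. On the resolution side I would declare $\D' \to \D^b(\overline{C}(X))$ to dominate $\D \to \D^b(\overline{C}(X))$ when there is a fully faithful functor $\D \hookrightarrow \D'$ over $\D^b(\overline{C}(X))$ compatible with the relevant adjoints. The step here is to show that $\CB_0 \subset \CB_0'$ induces such an embedding of the associated resolutions, and conversely that any such embedding forces $\CB_0 \subset \CB_0'$; the rigidity of Lefschetz decompositions enters essentially in the converse.

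Third, and decisively, one must prove surjectivity onto minimal objects: every minimal categorical resolution of $\D^b(\overline{C}(X))$ is of the above form. Starting from an abstract minimal resolution $\D$, one would analyse its restriction to the complement of the vertex, which is forced to be $\D^b$ of $\Tot(L^{-1}) \setminus X$, and reconstruct candidate blocks from the natural $\ZZ$-action by powers of $L$, showing that minimality compels these to assemble into a Lefschetz collection that generates $\D$ over $\D^b(X)$.

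The main obstacle is exactly this last step, and it is the reason the conjecture is still open: there is no classification of categorical resolutions of cone singularities and no universally agreed notion of minimality outside the crepant case, so reconstructing a Lefschetz decomposition from an arbitrary minimal resolution is the crux of the matter. A secondary difficulty is that the construction of~\cite{KuznResol} yields a genuine categorical resolution only under hypotheses on the singularities of the cone, such as rationality or the existence of a suitable geometric resolution; for the Grassmannian cones studied in this paper these hypotheses are available, which is why the conjectural picture is expected to be accessible in that case even though the general statement remains out of reach.
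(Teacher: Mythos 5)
This statement is stated in the paper as a conjecture of Kuznetsov, cited from \cite{KuznResol} purely as motivation for studying minimal Lefschetz decompositions; the paper contains no proof of it and does not attempt one. So there is nothing in the paper to compare your proposal against, and your framing of it as an open problem with a research program is the correct reading of the situation.

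Your assessment of where the difficulty lies is also essentially accurate. The forward direction --- that a Lefschetz decomposition of $\D^b(X)$ with respect to $\CO_X(1)$ produces a categorical resolution of the cone, built from $\D^b$ of the total space of $\CO_X(-1)$ (the blowup of the vertex) with the blocks $\CB_i$ glued along the exceptional divisor --- is already a theorem in \cite{KuznResol}, so your first step is not new ground but a citation. The genuinely open content, as you say, is twofold: (i) there is no general definition of minimality for categorical resolutions under which the correspondence could even be stated precisely outside special cases, and (ii) there is no argument showing that an abstract minimal categorical resolution of the cone must arise from a Lefschetz decomposition. Your second step (matching the partial orders) is also not automatic: showing that an embedding of resolutions over $\D^b$ of the cone forces an inclusion of first blocks requires reconstructing the blocks from the resolution, which is really the same difficulty as (ii) in disguise. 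One small caution: the affine cone over a Grassmannian is not smooth in codimension one in general and its singularities need to be checked against the hypotheses of \cite{KuznResol} before the gluing construction applies; this is routine but should not be waved away. None of this affects the verdict: the statement is a conjecture, the paper treats it as such, and so do you.
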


In the present paper we construct two Lefschetz decompositions of the bounded derived category of coherent sheaves on a Grassmannian $X=\Gr(k, V)$ of
$k$-dimensional subspaces in a vector space $V$ of dimension $n$.

In~\cite{KapranovGr} M.\,Kapranov constructed a full exceptional collection in the derived category of $X$
\[
	\D^b(X)=\left<\Sigma^\lambda\CU\mid\lambda\in\You_{n,k}\right>,
\]
where $\lambda$ runs over the set $\You_{n,k}$ of Young diagrams inscribed in a rectangle of size $k\times(n-k)$ and $\CU$ denotes the tautological subbundle of
rank $k$ in $V\otimes\CO_X$. However, Kapranov's collection gives rise to a highly non-minimal Lefschetz decomposition.

We consider two Lefschetz decompositions of $\D^b(X)$. The first one was independently discovered by C.\,Brav and H.\,Thomas~(\cite{Brav}). However, they
were only able to prove semi-orthogonality of this decomposition, but not fullness, and have never published their result. In the present paper we prove
both. In the case when $n$ and $k$ are coprime $(n,k)=1$ this Lefschetz decomposition is minimal. The first block of this decomposition is
\[
	\CB_0=\left<\Sigma^\lambda\CU^*\mid\lambda\in\Youu_{n,k}\right>,
\]
where $\Youu_{n,k}\subset\You_{n,k}$ denotes the set of those diagrams that do not go below the diagonal going from the lower left to the upper right corner.

It turns out that whenever $n$ and $k$ are not coprime, we can construct a smaller Lefschetz decomposition. However, at the moment we are
only able to prove semi-orthogonality of this decomposition. This second Lefschetz decomposition is conjectured to be full and minimal.

Finally, we should mention that in the case $X=\Gr(2,V)$ a minimal Lefschetz decomposition was constructed in~\cite{KuznExc}. It coincides with both
decompositions described in the present paper.

The paper is organized as follows. In Section~\ref{sec:preliminaries} we recall all the definitions and facts we need about Lefschetz decompositions, the
celebrated Borel--Bott--Weil theorem and Littlewood--Richardson rule and Kapranov's exceptional collections for Grassmannians.
In Section~\ref{sec:combinatorics_of_young_diagrams} we introduce a $\ZZ/n\ZZ$-group action on the set $\You_{n,k}$ and some characteristics related
to it. This action will play a significant role in the construction of our decompositions and in the proof of fullness.
In Section~\ref{sec:lefschetz_decompositions_for_grassmannians} we state our main results and conjectures and prove semi-orthogonality of the two Lefschetz
decompositions.
Finally, in Section~\ref{sec:fullness} we construct a new and highly interesting class of exact complexes and use them to prove fullness of the first decomposition.

\subsection*{Acknowledgements} 
I am extremely grateful to my advisor S.\,M.\,Gusein-Zade for his care, constant attention and mathematical tolerance. This work could not have been done without
A.\,Kuznetsov, who not only proposed this problem to me, but basically taught me his beautiful vision of algebraic geometry. I thank him, A.\,Bondal,
L.\,Manivel and D.\,Orlov for helpful discussions. Finally, I thank F.\,El Zein for inviting me to ICTP in summer 2010, a unique and wonderful place where this work was started.

The last, but not the least, I am thankful to my family for their support and to Caroline for being a great inspiration.



\section{Preliminaries} 
\label{sec:preliminaries}

\subsection{Lefschetz decompositions} 
\label{sub:lefschetz_decompositions}

We will be interested in a special class of semi-orthogonal decompositions of the bounded derived category of coherent sheaves on an algebraic variety. Let $X$ be
an algebraic variety over a field $\kk$ of characteristic zero and $\CO_X(1)$ a line bundle on $X$.

\begin{definition}[\cite{KuznDual}]
	A \textbf{Lefschetz decomposition} of $\D^b(X)$ is a semi-orthogonal decomposition of the form
\[
	\D^b(X)=\left<\CB_0,\CB_1(1),\ldots,\CB_{m-1}(m-1)\right>,\qquad \text{where } 0\subset\CB_{m-1}\subset \ldots \subset \CB_1 \subset \CB_0 \subset \D^b(X).
\]
	The category $\CB_i\subset\D^b(X)$ is called the $(i+1)$-th \textbf{block} of the decomposition.
\end{definition}

\begin{definition}
	Let $(E_1, E_2, \ldots, E_n)$ be an exceptional collection and $o:\{1,\dots,n\}\to\ZZ_{>0}$ be a positive integer-valued function, such that the
	categories
\begin{equation}\label{eq:basis}
	\CB_i=\left<E_j\mid i<o(j)\right>
\end{equation}
	form a Lefchetz decomposition. In this case we say that the collection $(E_1, E_2, \ldots, E_n)$ is a \textbf{Lefschetz basis} of 
	$\D^b(X)$ with the \textbf{support function}~$o$.
\end{definition}

\begin{remark}
	The reader might have an impression that the support function comes as a part of data in the definition of a Lefschetz basis. However, every Lefschetz
	decomposition is completely determined by its first block (see~\cite{KuznResol}) by the following inductive rule:
\[
	\CB_k= \vphantom{\CB_0(-k)}^\perp\CB_0(-k)\cap\CB_{k-1}.
\]
	Thus, one can say that a Lefschetz basis is an exceptional collection $(E_1, E_2, \ldots, E_n)$ in $\D^b(X)$, such that
	$\CB_0=\left<E_1,E_2,\ldots,E_n\right>$ is the first block of a Lefschetz decomposition and every other block $\CB_i$ is generated by a subcollection of
	$(E_1,E_2,\ldots,E_n)$.
\end{remark}
A Lefschetz decomposition is called \textbf{rectangular} if $\CB_{m-1}=\ldots=\CB_1=\CB_0$, and \textbf{minimal}, if it is minimal with
respect to the partial ordering given by the inclusion of the first block.

\begin{remark}
	The notion of a Lefschetz basis is closely related to the notion of a full Lefschetz exceptional collection (see~\cite{KuznExc}). Whenever we have a Lefschetz basis with a
	nonincreasing support function, we get a full Lefschetz exceptional collection, and vice versa.
	
	A reader familiar with mutations of exceptional collections may note that mutations are may defined for Lefschetz bases as well. Moreover, one can always mutate a Lefschetz basis so that it will correspond to a Lefschetz
	exceptional collection. It's enough to mutate objects with smaller support function value to the right within the first block.
\end{remark}

We will need the following simple lemma that gives an easy way to check semi-orthogonality of the blocks generated by a Lefschetz basis.

\begin{lemma}\label{lemma:basis}
	An exceptional collection $(E_1,E_2,\ldots,E_n)$ is a Lefschetz basis with the support function $o(i)$ if and only if
	\begin{enumerate}
		\item the subcategories $\CB_i(i)$ defined by~\eqref{eq:basis} generate $\D^b(X)$, and
		\item $\Ext^\bullet(E_p(k), E_q)=0$ for all $1\leq p,q\leq n$ and $0< k<o(p)$.
	\end{enumerate}
\end{lemma}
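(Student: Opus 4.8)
The plan is to prove the two implications separately, using the characterization of Lefschetz decompositions by their first block recalled in the preceding remark. Throughout, write $\CB_i = \left<E_j \mid i < o(j)\right>$ as in \eqref{eq:basis}, so that $\CB_0 \supseteq \CB_1 \supseteq \cdots$ is automatically a nested chain, and note that $\CB_i(i)$ is generated by the exceptional subcollection $(E_j(i) \mid i < o(j))$. Set $m = \max_i o(i)$, so that $\CB_i = 0$ for $i \geq m$.

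\emph{($\Rightarrow$)} Suppose $(E_1,\dots,E_n)$ is a Lefschetz basis with support function $o$. Then by definition $\D^b(X) = \left<\CB_0, \CB_1(1), \dots, \CB_{m-1}(m-1)\right>$ is a semi-orthogonal decomposition, so in particular the blocks generate, which is (1). For (2), fix $p, q$ and $0 < k < o(p)$. The object $E_p(k)$ lies in the subcategory $\CB_k(k)$, since $o(p) > k$ means $E_p \in \CB_k$. The object $E_q$ lies in $\CB_0 \supseteq \CB_j$ for every $j$; in particular $E_q \in \CB_0$, which is the last block on the right. Since $k \geq 1$, the block $\CB_k(k)$ sits strictly to the \emph{left} of $\CB_0$ in the semi-orthogonal decomposition, and semi-orthogonality gives $\Ext^\bullet(E_p(k), E_q) = \Hom_{\D^b(X)}(\CB_k(k), \CB_0[\bullet]) \subseteq \Hom(\CB_k(k), \CB_0[\bullet]) = 0$. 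This establishes (2).

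\emph{($\Leftarrow$)} Now assume (1) and (2); we must show $\D^b(X) = \left<\CB_0, \CB_1(1), \dots, \CB_{m-1}(m-1)\right>$ is a semi-orthogonal decomposition. By (1) the subcategories generate, so it remains to verify the semi-orthogonality $\CB_a(a) \subseteq {}^\perp\left(\CB_b(b)\right)$ for all $a > b$, i.e. $\Hom^\bullet(\CB_a(a), \CB_b(b)) = 0$. Twisting by $(-b)$, this is equivalent to $\Hom^\bullet(\CB_a(a-b), \CB_b) = 0$; writing $k = a - b \geq 1$, and since $\CB_a \subseteq \CB_k$ (as $a \geq k$ forces $o(j) > a \Rightarrow o(j) > k$... more precisely $\CB_a \subseteq \CB_k$ because the chain is decreasing and $a \ge k$), it suffices to prove
\[
	\Hom^\bullet\big(\CB_k(k), \CB_b\big) = 0 \qquad \text{for all } k \geq 1,\ b \geq 0.
\]
Since $\CB_b \subseteq \CB_0$, it is in fact enough to prove $\Hom^\bullet(\CB_k(k), \CB_0) = 0$ for all $k \geq 1$. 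Both $\CB_k(k)$ and $\CB_0$ are generated by subcollections of the exceptional collections $(E_j(k))$ and $(E_q)$ respectively, and $\Hom^\bullet$ between generators commutes with the triangulated operations (shifts and cones) in each argument, so it suffices to check vanishing on generators: $\Ext^\bullet(E_p(k), E_q) = 0$ whenever $E_p \in \CB_k$, i.e. whenever $o(p) > k$, i.e. whenever $0 < k < o(p)$ — which is exactly hypothesis (2). This completes the proof.

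\emph{Main obstacle.} The only genuinely delicate point is the reduction, in the ($\Leftarrow$) direction, from semi-orthogonality of all the twisted blocks $\CB_a(a)$ down to the single family of conditions on generators in (2); one must be careful that the inclusion $\CB_a \subseteq \CB_k$ for $a \geq k$ really does let one absorb the shift $a - b$ uniformly, and that $\Hom^\bullet$-vanishing is stable under the triangulated closure in both variables. Everything else is bookkeeping with the nested structure of the blocks, which is built into the definition \eqref{eq:basis}.
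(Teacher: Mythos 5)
Your proof is correct and follows essentially the same route as the paper's: reduce semi-orthogonality of the twisted blocks to $\Ext$-vanishing between the generating objects via the identity $\Ext^\bullet(E_p(k),E_q(l))=\Ext^\bullet(E_p(k-l),E_q)$, with condition (1) handling fullness. (Only a cosmetic slip: in the forward direction $\CB_0$ is the \emph{first} block and $\CB_k(k)$ sits to its right, but since the paper's convention is $\CA_i\subset\lort{\CA_j}$ for $i>j$, the vanishing $\Hom(\CB_k(k),\CB_0)=0$ you use is exactly right.)
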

\begin{proof}
	The first condition is just the fullness of the generated decomposition. As for the second one, it is sufficient to note that the blocks $\CB_i$ are generated
	by exceptional subcollections of $(E_1,E_2,\ldots,E_n)$. Thus, it is enough to check semi-orthogonality between the objects of these collections. It remains to note that
\[
	\Ext^\bullet(E_p(k), E_q(l))=\Ext^\bullet(E_p(k-l),E_q).
\]
\end{proof}


\subsection{Borel--Bott--Weil Theorem and Littlewood--Richardson Rule} 
\label{sub:borel_bott_weil_theorem}

The Borel--Bott--Weil theorem is an extremely powerful tool that computes the cohomology of line bundles on the flag variety of a semisimple algebraic group.
It can also be used to compute the cohomology of equivariant vector bundles on Grassmannians. We restrict ourselves to the case of the group $\GL(V)$.

Let $V$ be a vector space of dimension $n$. We identify the weight lattice of the group $\GL(V)$ with $\ZZ^n$, taking the $k$-th fundamental weight $\pi_k$, which
is the highest weight of the representation $\Lambda^kV$, to the vector $(1,\ldots,1,0,\ldots,0)$ (where the first $k$ entries are equal to $1$ and the
other are zero). In this presentation the cone of dominant weights of $\GL(V)$ corresponds to the set of nonincreasing integer sequences
$\alpha=(a_1,a_2,\ldots,a_n)$, $a_1\geq a_2\geq\ldots\geq a_n$. For such $\alpha$ let $\Sigma^\alpha V$ denote the corresponding representation of $\GL(V)$.

\begin{remark}
	In the following we will use Young diagrams, which also represent nonincreasing positive finite integer sequences. From the very beginning we
	should warn the reader that sometimes our Young diagrams will have negative entries. On can think of negative rows as of boxes drawn to the left of some  
	chosen vertical zero axis.
\end{remark}

Similarly, given a rank $n$ vector bundle $E$ on a scheme $S$, one can take the corresponding principal $\GL(n)$-bundle and construct the vector bundle
$\Sigma^\alpha E$ associated with the $\GL(n)$ representation of highest weight $\alpha$.

The Weyl group $\BS_n$ of $\GL(n)$ acts naturally on the weight lattice $\ZZ^n$. Let $\ell:\BS_n\to\ZZ$ denote the standard length function. For every
$\alpha\in\ZZ^k$ there exists an element $\sigma\in\BS_n$ such that $\sigma(\alpha)$ is nonincreasing, which is unique if and only if all the
entries of $\alpha$ are distinct.

Let $X$ be the flag variety of $\GL(V)$, and let $L_\alpha$ denote the line bundle on $X$ corresponding to the weight $\alpha$ (thus, $L_{\pi_k}$ is
just the pullback of $\CO_{\PP(\Lambda^kV)}(1)$ under the natural projection $X\to\PP(\Lambda^kV)$). Denote by
\[
	\rho = (n,n-1,\ldots,1)
\]
the half sum of the positive roots of $\GL(V)$. The corresponding line bundle $L_\rho$ is the square root of the anticanonical line bundle.

The Borel--Bott--Weil theorem computes the cohomology of line bundles $L_\alpha$ on $X$.

\begin{theorem}[\cite{BBW}]\label{thm:bbw}
	Assume that all entries of $\alpha+\rho$ are distinct. Let $\sigma$ be the unique permutation such that $\sigma(\alpha+\rho)$ is strictly decreasing. Then
\[
	H^k(X,L_\alpha)=\begin{cases}
		\Sigma^{\sigma(\alpha+\rho)-\rho}V^* & \text{if } k=l(\sigma), \\
		0 & \text{otherwise}.
	\end{cases}
\]
	If at least two entries of $\alpha+\rho$ coincide then $H^\bullet(X, L_\alpha)=0$.
\end{theorem}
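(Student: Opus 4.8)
The plan is to recover this statement (the Borel--Bott--Weil theorem) from its easy special case of dominant weights, by descending along the tower of $\PP^1$-bundles attached to the simple roots; this is the standard geometric argument for Bott's theorem. I write $\lambda$ for the weight (called $\alpha$ in the statement). For $\mu=(m_1\ge m_2\ge\dots\ge m_n)$ dominant, the Borel--Weil theorem gives $H^0(X,L_\mu)=\Sigma^\mu V^*$ and $H^{>0}(X,L_\mu)=0$; over a field of characteristic zero the vanishing is also a case of Kodaira vanishing, since $\omega_X=L_{-2\rho}$ and $L_{\mu+2\rho}$ is ample whenever $m_i\ge m_{i+1}-1$. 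For $i=1,\dots,n-1$ let $\pi_i\colon X\to X_i$ be the morphism forgetting the $i$-dimensional subspace of a flag; it identifies $X$ with $\PP(\CE_i)$ for a rank-two bundle $\CE_i$ on $X_i$, is Zariski-locally trivial, and has relative dualizing sheaf $L_{-\alpha_i}$ (here $\alpha_i=e_i-e_{i+1}$ is the $i$-th simple root). Writing $a=\lambda+\rho\in\ZZ^n$, the restriction of $L_\lambda$ to a fibre of $\pi_i$ is $\CO_{\PP^1}(a_i-a_{i+1}-1)$.

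The first technical step is to compute $R\pi_{i*}L_\lambda$ fibrewise, via the projection formula on $\PP(\CE_i)$ together with relative Serre duality along $\pi_i$. The outcome should be: if $a_i>a_{i+1}$ then $R^{>0}\pi_{i*}L_\lambda=0$; if $a_i=a_{i+1}$ then $R\pi_{i*}L_\lambda=0$; and if $a_i<a_{i+1}$ then $R^0\pi_{i*}L_\lambda=0$ while $R^1\pi_{i*}L_\lambda\cong\pi_{i*}L_{s_i\cdot\lambda}$, where $s_i\in\BS_n$ is the transposition $(i,i+1)$ and $s_i\cdot\lambda:=s_i(\lambda+\rho)-\rho$, so that $s_i\cdot\lambda+\rho=s_i(a)$ is $a$ with its $i$-th and $(i+1)$-st entries swapped; in this last case $L_{s_i\cdot\lambda}$ restricts to $\CO_{\PP^1}(m)$ with $m=a_{i+1}-a_i-1\ge 0$ on the fibres, so $R^{>0}\pi_{i*}L_{s_i\cdot\lambda}=0$ as well. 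Feeding this into the Leray spectral sequence for $\pi_i$---which degenerates because $R\pi_{i*}L_\lambda$ is concentrated in a single degree---yields the two facts on which everything rests: $H^\bullet(X,L_\lambda)=0$ if $a_i=a_{i+1}$ for some $i$; and $H^k(X,L_\lambda)\cong H^{k-1}(X,L_{s_i\cdot\lambda})$ for every $k$ if $a_i<a_{i+1}$.

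The second step is purely combinatorial. Put $N(a)=\#\{(j,k):j<k,\ a_j<a_k\}$; then $N(a)=0$ precisely when $a$ is nonincreasing, and when the entries of $a=\lambda+\rho$ are distinct one has $N(a)=\ell(\sigma)$ for the unique $\sigma\in\BS_n$ with $\sigma(a)$ strictly decreasing. Starting from $a$, repeatedly choose an index $i$ with $a_i<a_{i+1}$ and replace $\lambda$ by $s_i\cdot\lambda$ (equivalently, swap $a_i$ and $a_{i+1}$): by the second fact this replaces $H^k(X,L_\lambda)$ by $H^{k-1}$ of the new line bundle, and $N$ drops by exactly one, so the process terminates. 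If the entries of $\lambda+\rho$ are not all distinct, the same is true of every weight produced, so the process halts at a nonincreasing sequence containing two equal neighbouring entries, whence, unwinding the isomorphisms, $H^\bullet(X,L_\lambda)=0$. If the entries are all distinct, the process halts after exactly $N(\lambda+\rho)=\ell(\sigma)$ steps at the dominant weight $\sigma\cdot\lambda=\sigma(\lambda+\rho)-\rho$, so $H^k(X,L_\lambda)\cong H^{k-\ell(\sigma)}(X,L_{\sigma\cdot\lambda})$; combining with Borel--Weil gives $H^k(X,L_\lambda)=0$ for $k\ne\ell(\sigma)$ and $H^{\ell(\sigma)}(X,L_\lambda)=\Sigma^{\sigma(\lambda+\rho)-\rho}V^*$, as required.

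I expect the crux to be the $\PP^1$-bundle computation, and within it the identification $R^1\pi_{i*}L_\lambda\cong\pi_{i*}L_{s_i\cdot\lambda}$ on the nose, rather than merely up to an a priori undetermined twist by a line bundle pulled back from $X_i$. Getting this right means tracking the $\GL(V)$-equivariant structure carefully through relative Serre duality and exploiting the rank-two identity $\CE_i^\vee\cong\CE_i\otimes(\det\CE_i)^\vee$; it is very easy to be off by a power of $\det\CE_i$ or of $L_{\alpha_i}$. A reliable way to calibrate every sign is to first do the case $n=2$ by hand, where $X=\PP(V)$, $\pi_1$ is the map to a point, $\omega_{\PP(V)}=\CO(-2)\otimes\det V^*$, and one computes $H^1(\PP^1,L_\lambda)$ and $H^0(\PP^1,L_{s_1\cdot\lambda})$ directly as $\GL(V)$-modules and checks that they coincide. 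The remaining ingredients---degeneration of the Leray spectral sequence, the identity $N(\lambda+\rho)=\ell(\sigma)$, and Borel--Weil together with its higher-cohomology vanishing---are standard.
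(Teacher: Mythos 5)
Your argument is correct and is precisely the standard Demazure-style proof of Borel--Bott--Weil (descent along the $\PP^1$-fibrations $\pi_i$, relative Serre duality giving $R^1\pi_{i*}L_\lambda\cong\pi_{i*}L_{s_i\cdot\lambda}$, and the inversion-counting induction), which is exactly the content of the reference \cite{BBW} that the paper cites; the paper itself offers no proof of this theorem. Your flagged crux --- pinning down $R^1\pi_{i*}L_\lambda$ without a stray twist by a pullback from $X_i$ --- is the right thing to worry about, and your calibration via the $n=2$ case resolves it.
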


Now consider a Grassmannian $\Gr(k,V)$. Let $\CU\subset V\otimes\CO_{\Gr(k,V)}$ denote the tautological subbundle of rank~$k$. We have the following mutually
dual short exact sequences:
\[
0 \to \CU \to V\otimes\CO_{\Gr(k,V)} \to V/\CU \to 0,\qquad
0 \to \CU^\perp \to V^*\otimes\CO_{\Gr(k,V)} \to \CU^* \to 0,
\]
where $\CU^\perp = (V/\CU)^*$.
Note that $\Sigma^{1,1,\ldots,1}\CU^*\simeq\Sigma^{-1,-1,\ldots,-1}\CU^\perp$ is the positive generator of $\Pic\Gr(k,V)$. Let $\pi$ denote the canonical
projection $\pi:X\to\Gr(k,V)$ from the flag variety to the Grassmannian.

\begin{proposition}[\cite{KapranovGr}]
	Let $\beta\in\ZZ^k$ and $\gamma\in\ZZ^{n-k}$ be two nonincreasing integer sequences. Let $\alpha=(\beta,\gamma)\in\ZZ^n$ be their concatenation. Then we
	have $R\pi_*L_\alpha\simeq\Sigma^\beta\CU^*\otimes\Sigma^\gamma\CU^\perp$.
\end{proposition}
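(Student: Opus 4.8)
Proof plan for the Kapranov computation $R\pi_*L_\alpha\simeq\Sigma^\beta\CU^*\otimes\Sigma^\gamma\CU^\perp$.

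The plan is to factor the projection $\pi\colon X\to\Gr(k,V)$ through the relative flag varieties of the tautological bundles. Writing $\CU\subset V\otimes\CO_{\Gr(k,V)}$ for the rank-$k$ tautological subbundle and $V/\CU$ for the rank-$(n-k)$ quotient, the full flag variety $X$ of $\GL(V)$ is identified with the fibre product over $\Gr(k,V)$ of the relative flag variety $\mathsf{Fl}(\CU)$ of complete flags in the fibres of $\CU$ and the relative flag variety $\mathsf{Fl}(V/\CU)$ of complete flags in the fibres of $V/\CU$. Indeed, a complete flag in $V$ containing a fixed $k$-dimensional subspace $U$ is precisely the datum of a complete flag in $U$ together with a complete flag in $V/U$. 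Let $p\colon X\to\mathsf{Fl}(\CU)\times_{\Gr(k,V)}\mathsf{Fl}(V/\CU)$ be this isomorphism and let $q\colon \mathsf{Fl}(\CU)\times_{\Gr(k,V)}\mathsf{Fl}(V/\CU)\to\Gr(k,V)$ be the structure map, so $\pi=q\circ p$.

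First I would record how $L_\alpha$ decomposes under this factorization. Since $\alpha=(\beta,\gamma)$ is the concatenation of a weight $\beta\in\ZZ^k$ (governing the first $k$ coordinates, i.e. the flag in $\CU$) and a weight $\gamma\in\ZZ^{n-k}$ (governing the last $n-k$ coordinates, i.e. the flag in $V/\CU$), the line bundle $L_\alpha$ on $X$ is the external tensor product $L_\beta^{\CU}\boxtimes L_\gamma^{V/\CU}$ of the relative line bundle $L_\beta^{\CU}$ on $\mathsf{Fl}(\CU)$ attached to $\beta$ and the relative line bundle $L_\gamma^{V/\CU}$ on $\mathsf{Fl}(V/\CU)$ attached to $\gamma$. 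Next I would apply the relative Borel--Bott--Weil theorem (Theorem~\ref{thm:bbw} applied fibrewise, which is legitimate because the statement is $\GL$-equivariant and hence globalizes over the base): since $\beta$ is nonincreasing, all higher direct images along $\mathsf{Fl}(\CU)\to\Gr(k,V)$ vanish and $R(\mathrm{pr}_1)_*L_\beta^{\CU}=\Sigma^\beta\CU^*$ in degree zero; likewise, since $\gamma$ is nonincreasing, $R(\mathrm{pr}_2)_*L_\gamma^{V/\CU}=\Sigma^\gamma(V/\CU)^*=\Sigma^\gamma\CU^\perp$ in degree zero, using the identification $\CU^\perp=(V/\CU)^*$ from the dual exact sequence. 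Here one must be mildly careful about the convention relating $\Sigma^\alpha V$ to $V^*$ in Theorem~\ref{thm:bbw}: with $\rho=(n,n-1,\dots,1)$ and $\beta$ already dominant, $\sigma=\mathrm{id}$, so $H^0(L_\beta^{\CU})=\Sigma^\beta\CU^*$ fibrewise, matching the stated formula.

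Finally I would assemble the pieces by the projection formula and the Künneth/base-change formula for the fibre product. Because $X$ is the fibre product over $\Gr(k,V)$ of the two relative flag varieties and the two relative line bundles are pulled back from the respective factors, flat base change gives
\[
R\pi_*L_\alpha \;=\; R q_*\bigl(L_\beta^{\CU}\boxtimes L_\gamma^{V/\CU}\bigr)\;\simeq\; \bigl(R(\mathrm{pr}_1)_*L_\beta^{\CU}\bigr)\otimes_{\CO_{\Gr(k,V)}}\bigl(R(\mathrm{pr}_2)_*L_\gamma^{V/\CU}\bigr)\;\simeq\;\Sigma^\beta\CU^*\otimes\Sigma^\gamma\CU^\perp,
\]
with no higher cohomology since both factors are concentrated in degree zero. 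The main obstacle — really the only place that needs care — is the bookkeeping in the second step: verifying that the splitting of coordinates $\alpha=(\beta,\gamma)$ translates correctly into the external tensor decomposition of $L_\alpha$ over the fibre product, and that the dualization conventions in Borel--Bott--Weil produce $\Sigma^\beta\CU^*$ rather than $\Sigma^\beta\CU$ (and symmetrically $\Sigma^\gamma\CU^\perp$ rather than $\Sigma^\gamma(V/\CU)$). Everything else is a formal consequence of flat base change and the projection formula.
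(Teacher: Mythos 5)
The paper does not prove this proposition; it is quoted from Kapranov's paper \cite{KapranovGr} and used as a black box, so there is no internal argument to compare yours against. Your proof is nevertheless correct and is the standard derivation: the identification $X\simeq\Fl(\CU)\times_{\Gr(k,V)}\Fl(V/\CU)$, the factorization $L_\alpha\simeq L_\beta^{\CU}\boxtimes L_\gamma^{V/\CU}$, the fibrewise Borel--Bott--Weil computation for dominant weights (giving $\Sigma^\beta\CU^*$ and $\Sigma^\gamma\CU^\perp$ in degree zero, consistent with the dualization convention of Theorem~\ref{thm:bbw}), and the Künneth/projection-formula assembly are all sound.
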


\begin{corollary}\label{crl:coh}
	If $\beta\in\ZZ^k$ and $\gamma\in\ZZ^{n-k}$ are two nonincreasing sequences and $\alpha=(\beta,\gamma)\in\ZZ^n$ then
\[
	H^\bullet(\Gr(k,V),\Sigma^\beta\CU^*\otimes\Sigma^\gamma\CU^\perp)\simeq H^\bullet(X,L_\alpha).
\]
\end{corollary}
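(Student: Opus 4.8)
The plan is to deduce this immediately from the preceding Proposition together with the standard factorization of the global sections functor through the pushforward along $\pi$. First I would recall that for the projection $\pi\colon X\to\Gr(k,V)$ and any object $F\in\D^b(X)$ there is a canonical isomorphism $\RGamma(X,F)\simeq\RGamma(\Gr(k,V),R\pi_*F)$: this is the Grothendieck spectral sequence for the composition of derived functors $\Gamma_X=\Gamma_{\Gr(k,V)}\circ\pi_*$, which in the derived category collapses to an isomorphism since $R\pi_*$ is already the total derived functor of $\pi_*$ and $\RGamma$ of it computes the composite.

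Then I would apply this with $F=L_\alpha$ and plug in the isomorphism $R\pi_*L_\alpha\simeq\Sigma^\beta\CU^*\otimes\Sigma^\gamma\CU^\perp$ furnished by the Proposition (which applies precisely because $\beta$ and $\gamma$ are each nonincreasing, so that $\alpha=(\beta,\gamma)$ is a concatenation of dominant weights and its pushforward is a single equivariant bundle placed in degree $0$, not a complex). This gives
\[
\RGamma(X,L_\alpha)\simeq\RGamma\bigl(\Gr(k,V),R\pi_*L_\alpha\bigr)\simeq\RGamma\bigl(\Gr(k,V),\Sigma^\beta\CU^*\otimes\Sigma^\gamma\CU^\perp\bigr),
\]
and passing to cohomology of both sides yields the asserted isomorphism of graded vector spaces $H^\bullet(\Gr(k,V),\Sigma^\beta\CU^*\otimes\Sigma^\gamma\CU^\perp)\simeq H^\bullet(X,L_\alpha)$.

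There is essentially no obstacle here; the corollary is a formal consequence. The one point worth keeping in mind is that the hypothesis on $\beta$ and $\gamma$ is what guarantees, via the Proposition, that $R\pi_*L_\alpha$ is concentrated in a single cohomological degree, so that the left-hand side is genuine cohomology of a sheaf rather than hypercohomology of a complex. Combined with Theorem~\ref{thm:bbw}, Corollary~\ref{crl:coh} is exactly the tool that reduces every cohomology computation for the bundles $\Sigma^\beta\CU^*\otimes\Sigma^\gamma\CU^\perp$ on the Grassmannian to sorting the sequence $\alpha+\rho$ and reading off the length of the sorting permutation.
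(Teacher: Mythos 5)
Your argument is correct and is exactly the intended derivation: the paper treats this as an immediate consequence of the preceding Proposition via $\RGamma(X,L_\alpha)\simeq\RGamma(\Gr(k,V),R\pi_*L_\alpha)$, with the hypothesis on $\beta,\gamma$ ensuring $R\pi_*L_\alpha$ is the single sheaf $\Sigma^\beta\CU^*\otimes\Sigma^\gamma\CU^\perp$ in degree zero. Nothing further is needed.
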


Since every irreducible $\GL(V)$-equivariant vector bundle on $\Gr(k,V)$ is isomorphic to $\Sigma^\beta\CU^*\otimes\Sigma^\gamma\CU^\perp$ for some
nonincreasing $\beta\in\ZZ^k$ and $\gamma\in\ZZ^{n-k}$, a combination of Corollary~\ref{crl:coh} and Theorem~\ref{thm:bbw} allows to compute the
cohomology of any equivariant vector bundle on $\Gr(k,V)$.

In order to compute $\Ext$ groups between equivariant bundles on the Grassmannian one needs to use the Littlewood--Richardson rule~\cite{LR}. In the following we
will need a simple observation that follows directly from the Littlewood--Richardson rule.

\begin{lemma}\label{lm:lrrule}
	Let $\lambda$ and $\mu$ be two Young diagrams with $k$ rows. Then for any irreducible summand
\[
	\Sigma^\alpha\CU^*\subset\Sigma^\lambda\CU^*\otimes\Sigma^\mu\CU^*
\]
	one has
\[
	\lambda_i+\mu_k\leq\alpha_i\leq\lambda_1+\mu_i
\]
	for all $1\leq i\leq k$.
\end{lemma}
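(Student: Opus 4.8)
The plan is to derive the bounds directly from the Littlewood--Richardson rule applied to the tensor product $\Sigma^\lambda\CU^*\otimes\Sigma^\mu\CU^*$. Recall that the irreducible $\GL_k$-summands $\Sigma^\alpha\CU^*$ occurring in $\Sigma^\lambda\CU^*\otimes\Sigma^\mu\CU^*$ are indexed by Young diagrams $\alpha$ obtained from $\lambda$ by adding a skew tableau of shape $\alpha/\lambda$ and content $\mu$ subject to the lattice word (Yamanouchi) condition; equivalently the Littlewood--Richardson coefficient $c^\alpha_{\lambda\mu}$ is nonzero. Since all diagrams here have at most $k$ rows, $\alpha$ has at most $k$ rows as well, and $|\alpha|=|\lambda|+|\mu|$.

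First I would prove the lower bound $\alpha_i\geq\lambda_i+\mu_k$. In any Littlewood--Richardson filling of $\alpha/\lambda$ by the content of $\mu$, the boxes of the $i$-th row of $\alpha$ that lie outside $\lambda$ are filled with entries that are weakly increasing along the row; moreover in a semistandard skew tableau the entries in row $i$ are at least $i$ cannot be used — rather, the key point is the classical fact that $c^\alpha_{\lambda\mu}\neq 0$ forces $\alpha\supseteq\lambda$ and $\alpha\supseteq\mu$, so in particular $\alpha_i\geq\lambda_i$ and $\alpha_i\geq\mu_i\geq\mu_k$. To sharpen $\alpha_i\geq\lambda_i$ to $\alpha_i\geq\lambda_i+\mu_k$ one argues row by row: the entries equal to $k$ in the LR filling must all lie in the last row $k$ (since the columns of the skew tableau are strictly increasing and there are only $k$ rows), and the reverse-row-reading lattice condition then forces, for each symbol $j\le k$, that the $j$'s in the filling of $\alpha/\lambda$ occupy at least $\mu_k$ boxes in each of the first $k$ rows — more carefully, I would instead observe that $c^\alpha_{\lambda\mu}=c^\alpha_{\mu\lambda}$, so $\alpha$ is also obtained by adding a skew tableau of content $\lambda$ to $\mu$, and the box count in row $i$ shows $\alpha_i\geq\mu_i+(\text{number of }1\text{'s that can appear in row }i)$; the cleanest route is to use the embedding into the polynomial representation of $\GL_n$ and the fact that the lowest weight of $\Sigma^\mu$ among $\GL_k$-weights is $\mu_k$ in coordinate $i$, giving $\alpha_i\geq\lambda_i+\mu_k$ by weight considerations.

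For the upper bound $\alpha_i\leq\lambda_1+\mu_i$ I would run the dual argument. The highest entry appearing in row $i$ of the LR skew filling of content $\mu$ is at most $i$ (semistandardness: the entry in row $i$, column $j$ is $\le i$ because the column is strictly increasing and starts no lower than row $1$), so the symbols $1,\dots,i$ are the only ones that contribute boxes to rows $1,\dots,i$ of $\alpha/\lambda$. Hence the total number of boxes of $\alpha/\lambda$ in the first $i$ rows is at most $\mu_1+\cdots+\mu_i$. Combined with $\lambda_1+\cdots+\lambda_i$ boxes of $\lambda$ in those rows, we get $\alpha_1+\cdots+\alpha_i\le(\lambda_1+\cdots+\lambda_i)+(\mu_1+\cdots+\mu_i)$. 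This is not quite the pointwise bound, so I would instead localize: row $i$ of $\alpha$ meets $\lambda$ in at most $\lambda_i\le\lambda_1$ boxes and meets the skew part in boxes filled only with symbols $\geq i$ on one side; the correct elementary statement is that in row $i$ of $\alpha/\lambda$ the entries are $\geq$ something, and a careful accounting of the content $\mu$ used in columns to the right of $\lambda$ gives $\alpha_i-\lambda_1\le\alpha_i-\lambda_i\le(\text{boxes of content }\mu\text{ in row }i)$ while the lattice condition bounds the latter by $\mu_i$. Passing to $\GL_n$-weights again: $\Sigma^\alpha V$ is a summand of $\Sigma^\lambda V\otimes\Sigma^\mu V$, its highest weight is $\alpha$, and $\alpha$ is dominated in the dominance order by $\lambda+\mu$ while each weight of $\Sigma^\mu$ has $i$-th coordinate at most $\mu_i$ after sorting; reading off the $i$-th coordinate of $\alpha=\lambda+(\text{weight of }\Sigma^\mu)$ in a suitable order yields $\alpha_i\le\lambda_1+\mu_i$.

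I expect the main obstacle to be pinning down the pointwise (rather than the partial-sum, i.e. dominance-order) inequalities cleanly: the Littlewood--Richardson rule most naturally delivers majorization-type statements, and extracting the individual-coordinate bounds $\lambda_i+\mu_k\le\alpha_i\le\lambda_1+\mu_i$ requires choosing the right bookkeeping — which symbols of the content can appear in which row of the skew shape. The efficient way to handle this, which I would adopt, is to phrase everything in terms of weights of $\GL_k$-representations: write $\Sigma^\alpha\CU^*\subset\Sigma^\lambda\CU^*\otimes\Sigma^\mu\CU^*$, note that $\alpha$ is a weight of the tensor product, hence $\alpha=\lambda'+\mu'$ where $\lambda'$ is a weight of $\Sigma^\lambda\CU^*$ and $\mu'$ a weight of $\Sigma^\mu\CU^*$; since all weights $\nu$ of $\Sigma^\lambda$ satisfy $\lambda_k\le\nu_{w(i)}\le\lambda_1$ for every $i$ and some permutation $w$ (the weights lie in the convex hull of the $\BS_k$-orbit of $\lambda$), and $\alpha$ itself is dominant, a short convexity argument gives both inequalities. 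This reduces the lemma to the standard fact that the weights of $\Sigma^\lambda$ are bounded coordinatewise by $\lambda_1$ from above and $\lambda_k$ from below, together with dominance of $\alpha$ in $\Sigma^\lambda\otimes\Sigma^\mu$.
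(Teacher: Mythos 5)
The lemma is true, and the paper offers no written proof (it is stated as an immediate consequence of the Littlewood--Richardson rule), so the only question is whether your argument actually closes. It does not: the weight-theoretic argument you finally commit to is too weak to give the asserted bounds. Writing $\alpha=\lambda'+\mu'$ with $\lambda'$ an arbitrary weight of $\Sigma^\lambda\CU^*$ and $\mu'$ an arbitrary weight of $\Sigma^\mu\CU^*$, and using only the coordinatewise bounds $\lambda_k\le\lambda'_i\le\lambda_1$, $\mu_k\le\mu'_i\le\mu_1$, yields $\lambda_k+\mu_k\le\alpha_i\le\lambda_1+\mu_1$ --- strictly weaker than the asymmetric bounds $\lambda_i+\mu_k\le\alpha_i\le\lambda_1+\mu_i$ (already for $k=2$, $\lambda=\mu=(2,0)$ the lemma forces $\alpha_1\ge 2$ while your bound only gives $\alpha_1\ge 0$). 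No ``short convexity argument'' recovers the loss, because the inequality genuinely mixes the index $i$ of one diagram with the extreme entry of the other. The refinement you need, and never state, is that $\alpha-\lambda$ is itself a weight of $\Sigma^\mu\CU^*$ (since $\dim\Hom(\Sigma^\alpha,\Sigma^\lambda\otimes\Sigma^\mu)\le\dim(\Sigma^\mu)_{\alpha-\lambda}$), whence $\mu_k\le\alpha_i-\lambda_i$, and symmetrically $\alpha-\mu$ is a weight of $\Sigma^\lambda\CU^*$, whence $\alpha_i-\mu_i\le\lambda_1$. You come close to this when you write $\alpha=\lambda+(\text{weight of }\Sigma^\mu)$, but you then read off the wrong coordinate bound from it and retreat to the insufficient $\lambda'+\mu'$ formulation.

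Your combinatorial attempts also contain an incorrect justification: for a \emph{skew} semistandard tableau, strict increase along columns does \emph{not} force the entry in row $i$ to be at most $i$ (a column of $\alpha/\lambda$ need not start in row $1$); that bound is a consequence of the lattice-word condition. Granting that fact, the clean local argument for the upper bound is: a box of $\alpha/\lambda$ in row $i$ and column $c>\lambda_1$ has the whole column segment in rows $1,\dots,i$ inside the skew shape, so by column-strictness its entry is $\ge i$, hence exactly $i$; there are only $\mu_i$ entries equal to $i$, so $\alpha_i-\lambda_1\le\mu_i$. The lower bound then follows by dualizing (replace $\alpha,\lambda,\mu$ by $(-\alpha_k,\dots,-\alpha_1)$ etc.) together with the symmetry $c^\alpha_{\lambda\mu}=c^\alpha_{\mu\lambda}$. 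Either the corrected weight argument or this bookkeeping would complete the proof; as written, yours does not.
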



\subsection{Kapranov's exceptional collections} 
\label{sub:kapranov_s_exceptional_collections}

Let $X=\Gr(k,V)$ denote the Grassmannian of subspaces of dimension $k$ in an $n$-dimensional vector space $V$. Let $\CU$ denote the tautological
subbundle of rank $k$ on $X$ and let $\You_{n,k}$ denote the set of Young diagrams inscribed in a rectangle of size $k\times(n-k)$.

\begin{theorem}[\cite{KapranovGr}]
	The collection $\left\{\Sigma^\lambda\CU\mid \lambda\in\You_{n,k}\right\}$ (with any order refining the partial inclusion order~$\preceq$ on~$\You_{n,k}$)
	is a full exceptional collection in $\D^b(X)$. Moreover, this collection is strong, which implies that $\D^b(X)$ is equivalent to the homotopy 
	category of bounded complexes of sheaves consisting of finite direct sums of sheaves $\Sigma^\lambda\CU$ where $\lambda\in\You_{n,k}$.
\end{theorem}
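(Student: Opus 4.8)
The plan is to establish the two assertions of the theorem — fullness and strongness of the collection $\{\Sigma^\lambda\CU\mid\lambda\in\You_{n,k}\}$ — separately, since strongness is the vanishing of higher $\Ext$ groups while fullness is a generation statement. For strongness, the key point is that every $\Sigma^\lambda\CU$ with $\lambda\in\You_{n,k}$ is, up to twist by a line bundle, of the form $\Sigma^\beta\CU^*$ for a nonincreasing $\beta\in\ZZ^k$, so that $\RHom(\Sigma^\lambda\CU,\Sigma^\mu\CU)$ is computed as the cohomology of the bundle $(\Sigma^\lambda\CU)^*\otimes\Sigma^\mu\CU$. First I would decompose this tensor product into irreducibles using the Littlewood--Richardson rule — here $(\Sigma^\lambda\CU)^*\otimes\Sigma^\mu\CU$ is a sum of bundles $\Sigma^\alpha\CU$ with $\alpha\in\ZZ^k$ controlled by Lemma~\ref{lm:lrrule}. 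Then, invoking Corollary~\ref{crl:coh}, the cohomology of each such $\Sigma^\alpha\CU=\Sigma^\alpha\CU^{**}$ on $X$ agrees with the cohomology of the line bundle $L_{(\alpha,0,\dots,0)}$ on the full flag variety, to which Theorem~\ref{thm:bbw} (Borel--Bott--Weil) applies. The combinatorial heart is checking that, because $\lambda,\mu$ are inscribed in the $k\times(n-k)$ rectangle, the entries of the weight $\alpha$ (after concatenating with the zero vector of length $n-k$) plus $\rho$ are squeezed into a narrow enough range that $\sigma$, the sorting permutation, is forced to have length zero whenever there is any cohomology at all — giving concentration in degree $0$ — and that for $\lambda\not\preceq\mu$ the relevant entries collide, killing all cohomology; this yields the exceptional and strong properties simultaneously.

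For fullness, the cleanest route is to transfer Kapranov's theorem on the dual collection. By construction $\Sigma^\lambda\CU$ and $\Sigma^{\lambda'}\CU^*$ (for the appropriate complementary diagram $\lambda'\in\You_{n,k}$) are related by dualizing and twisting by $\det\CU$, an autoequivalence of $\D^b(X)$ up to the involution $\lambda\mapsto\lambda'$ on $\You_{n,k}$. Hence generation by the $\Sigma^\lambda\CU^*$ is equivalent to generation by the $\Sigma^\lambda\CU$. Alternatively, and more self-containedly, one can run the standard Beilinson-type resolution-of-the-diagonal argument: the diagonal $\Delta\subset X\times X$ is cut out by a regular section of the bundle $\CU^\vee\boxtimes(V/\CU)$ of rank $k(n-k)=\dim X$, so the Koszul complex of that section resolves $\CO_\Delta$; pushing forward and applying the Cauchy/Littlewood--Richardson decomposition of the exterior powers $\Lambda^j(\CU^\vee\boxtimes(V/\CU))$ expresses $\CO_\Delta$ as an iterated extension of sheaves of the form $\Sigma^\lambda\CU\boxtimes(\text{something})$ with $\lambda\in\You_{n,k}$, which immediately gives that the $\Sigma^\lambda\CU$ split-generate $\D^b(X)$ via the Fourier--Mukai kernel $\CO_\Delta$.

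The last clause — that strongness implies $\D^b(X)$ is equivalent to the homotopy category of bounded complexes of finite direct sums of the $\Sigma^\lambda\CU$ — is a formal consequence: a full strong exceptional collection $(E_1,\dots,E_N)$ gives a tilting object $E=\bigoplus_i E_i$, and the functor $\RHom(E,-)\colon\D^b(X)\to\D^b(\End(E)\text{-mod})$ is an equivalence; since the collection is exceptional the algebra $\End(E)$ has finite global dimension and the subcategory of projectives corresponds exactly to finite direct sums of the $\Sigma^\lambda\CU$, so bounded complexes of such sheaves, up to homotopy, recover all of $\D^b(X)$. I expect the main obstacle to be the Borel--Bott--Weil bookkeeping in the strongness step: one must carefully track how the concatenated weight $(\alpha,0^{n-k})+\rho$ behaves, verify the range constraints from Lemma~\ref{lm:lrrule} are tight enough to pin down the length of the sorting permutation, and handle the case distinctions (distinct versus coinciding entries of $\alpha+\rho$) uniformly across all pairs $\lambda,\mu\in\You_{n,k}$ — this is where the hypothesis that the diagrams fit in the $k\times(n-k)$ box is used in an essential way, and getting the inequalities exactly right is the delicate part.
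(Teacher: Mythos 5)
The paper does not prove this theorem --- it is quoted from \cite{KapranovGr} --- so the only meaningful comparison is with Kapranov's original argument, and your proposal is essentially that argument: Littlewood--Richardson plus Borel--Bott--Weil for exceptionality and strongness (the entries $n-i+1+\alpha_i$ all lie in $[k-i+1,\,n+n-k]$, so either one of them lands in $\{1,\dots,n-k\}$ and everything vanishes, or they all exceed $n-k$ and the sorting permutation is trivial), and the Koszul resolution of the diagonal by the section of $\CU^{\vee}\boxtimes(V/\CU)$ together with the Cauchy formula for fullness; the tilting argument for the last clause is also standard and correct. Two caveats. First, your ``cleanest route'' to fullness --- transferring from the dual collection --- is circular, since generation by the $\Sigma^\lambda\CU^*$ is exactly as unproven as generation by the $\Sigma^\lambda\CU$; only your second, resolution-of-the-diagonal route is a proof (and note that the Cauchy summands are $\Sigma^\alpha\CU^{\vee}\boxtimes\Sigma^{\alpha^*}(V/\CU)=\Sigma^{\alpha^c}\CU(n-k)\boxtimes\Sigma^{\alpha^*}(V/\CU)$, so a uniform line-bundle twist must be absorbed before one literally sees the set $\{\Sigma^\lambda\CU\}_{\lambda\in\You_{n,k}}$). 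Second, for the $\CU$-based collection the nonzero Homs go from \emph{larger} to \emph{smaller} diagrams (e.g.\ $\Hom(\CU,\CO)=V^*\neq 0$ while $\Hom(\CO,\CU)=0$), so the vanishing you must establish is for $\mu\not\preceq\lambda$, not $\lambda\not\preceq\mu$; carrying out your own BBW computation would reveal this, and the ambiguity is already present in the paper's statement of the ordering, but you should fix the direction when writing the argument out.
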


Furthermore, there is a very useful spectral sequence described in the following theorem.

\begin{theorem}[\cite{KapranovGr}]\label{specseq}
	For every $\CF^\bullet\in\D^b(X)$ there is a generalized Beilinson spectral sequence
\[
E^{pq}_1=\bigoplus_{|\alpha|=-p}\HH^q(\CF^\bullet \otimes \Sigma^{\alpha^*}\CU^\perp)\otimes \Sigma^\alpha\CU \Rightarrow H^{p+q}(\CF^\bullet),
\]
where $\alpha$ runs over $\You_{n,k}$ and $\alpha^*$ denotes the transpose partition.
\end{theorem}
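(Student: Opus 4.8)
The proof follows Beilinson's method, with $\PP^n$ replaced by $X=\Gr(k,V)$ and the Euler sequence replaced by a Koszul resolution of the diagonal. Let $p_1,p_2\colon X\times X\to X$ be the two projections, write $\CU_i=p_i^*\CU$ and $\CU_i^\perp=p_i^*\CU^\perp$, and let $\Delta\subset X\times X$ be the diagonal. The starting point is the Fourier--Mukai identity $\CF^\bullet\cong Rp_{2*}\bigl(p_1^*\CF^\bullet\lotimes\CO_\Delta\bigr)$, valid for every $\CF^\bullet\in\D^b(X)$. The plan is to resolve $\CO_\Delta$ by a Koszul complex, decompose its terms via the Cauchy formula, plug the resolution into this identity, and read off the $E_1$-page of the resulting hypercohomology spectral sequence.

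First I would construct the resolution of the diagonal. The composite $\CU_2\hookrightarrow V\otimes\CO_{X\times X}\twoheadrightarrow p_1^*(V/\CU)$ is a section $s$ of the vector bundle $\CE:=\CU_2^*\otimes p_1^*(V/\CU)$, and $s$ vanishes exactly where $\CU_2\subseteq\CU_1$, i.e.\ (the fibres having the same dimension~$k$) precisely along $\Delta$. Since $\rank\CE=k(n-k)=\codim_{X\times X}\Delta$, the section $s$ is regular, so the Koszul complex
\[
0\to\Lambda^{k(n-k)}\CE^*\to\cdots\to\Lambda^2\CE^*\to\CE^*\to\CO_{X\times X}\to\CO_\Delta\to 0
\]
is exact; here $\CE^*\cong\CU_1^\perp\otimes\CU_2$ via $(V/\CU)^*\cong\CU^\perp$. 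Applying the Cauchy decomposition $\Lambda^p(A\otimes B)=\bigoplus_{|\mu|=p}\Sigma^\mu A\otimes\Sigma^{\mu^*}B$ to $A=\CU_1^\perp$ (rank $n-k$) and $B=\CU_2$ (rank $k$), and re-indexing by $\alpha:=\mu^*\in\You_{n,k}$, gives
\[
\Lambda^p\CE^*\cong\bigoplus_{\alpha\in\You_{n,k},\,|\alpha|=p}\Sigma^{\alpha^*}\CU_1^\perp\boxtimes\Sigma^\alpha\CU_2 .
\]

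Next I would feed this into the Fourier--Mukai formula. Represent $\CF^\bullet$ by a bounded complex of locally free sheaves and let $\Kosz^\bullet$ be the Koszul complex above placed in cohomological degrees $-k(n-k),\dots,0$ (so $\Kosz^p=\Lambda^{-p}\CE^*$ and $\Kosz^0=\CO_{X\times X}$), which is quasi-isomorphic to $\CO_\Delta$. The brutal filtration on $\Kosz^\bullet$ makes $p_1^*\CF^\bullet\otimes\Kosz^\bullet$ a filtered bounded complex of locally free sheaves representing $p_1^*\CF^\bullet\lotimes\CO_\Delta$; applying $Rp_{2*}$ yields a filtered object of $\D^b(X)$ whose associated spectral sequence has
\[
E_1^{p,q}=\HH^q\!\bigl(Rp_{2*}(p_1^*\CF^\bullet\otimes\Lambda^{-p}\CE^*)\bigr),\qquad -k(n-k)\le p\le 0,
\]
converging to $\HH^{p+q}\bigl(Rp_{2*}(p_1^*\CF^\bullet\lotimes\CO_\Delta)\bigr)=H^{p+q}(\CF^\bullet)$. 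To identify $E_1$, combine the Cauchy decomposition above with the projection formula along $p_2$ and the base-change identity $Rp_{2*}\bigl(p_1^*(-)\bigr)\cong\RGamma(X,-)\otimes_\kk\CO_X$ for the Cartesian square $X\times X\to X\to\Spec\kk$, obtaining
\[
Rp_{2*}\bigl(p_1^*\CF^\bullet\otimes\Lambda^{-p}\CE^*\bigr)\cong\bigoplus_{|\alpha|=-p}\RGamma\bigl(X,\CF^\bullet\otimes\Sigma^{\alpha^*}\CU^\perp\bigr)\otimes\Sigma^\alpha\CU .
\]
Taking degree-$q$ cohomology gives exactly $E_1^{p,q}=\bigoplus_{|\alpha|=-p}\HH^q(\CF^\bullet\otimes\Sigma^{\alpha^*}\CU^\perp)\otimes\Sigma^\alpha\CU$, as claimed.

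The only genuinely non-formal ingredients are the regularity of the section $s$ cutting out the diagonal (immediate from the dimension count once one checks in affine charts of the Grassmannian that the zero locus is reduced) and the Cauchy formula in the stated form; everything else --- the projection formula, flat base change, and the existence and convergence of the spectral sequence of a bounded filtered complex --- is standard. This is Kapranov's original argument, so in the write-up one may simply cite~\cite{KapranovGr}; the steps above are recorded for completeness.
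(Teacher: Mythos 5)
Your argument is correct and is essentially Kapranov's original proof from the cited reference \cite{KapranovGr} (resolution of the diagonal by the Koszul complex of the regular section of $\CU_2^*\otimes p_1^*(V/\CU)$, Cauchy decomposition of its exterior powers, and the spectral sequence of the brutally filtered Fourier--Mukai kernel); the paper itself states this theorem without proof, so there is nothing to compare against beyond noting the match. All the identifications (projection formula, flat base change, and the convergence to $\CF^\bullet$ via $p_1^*\CF^\bullet\lotimes\CO_\Delta\cong\Delta_*\CF^\bullet$) check out.
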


We should mention that Kapranov's original collection is based on the tautological bundle $\CU$ while we prefer to work with its dual $\CU^*$.
This does not change much as the duality functor is an anti-auto-equivalence of categories switching between $\CU$ and $\CU^*$. We hope that this will not lead
to any confusion.



\section{Combinatorics of Young diagrams} 
\label{sec:combinatorics_of_young_diagrams}

\subsection{Group actions on diagrams} 
\label{sub:group_actions_on_diagrams}

Let $\You_{n,k}$ denote the set of Young diagrams inscribed in a rectangle of size $k\times(n-k)$. These can be identified with nonincreasing integer sequences $\lambda=(\lambda_1,\lambda_2,\ldots,\lambda_k)$
such that $n-k\geq\lambda_1\geq\lambda_2\geq\ldots\geq\lambda_k\geq 0$. We call this description \textbf{usual}. One can also think of such diagrams as of
integer paths going from the lower left to the upper right corner of the rectangle that go only rightward and upward. Such a path consists of $n-k$
horizontal and $k$ vertical unit segments. Thus, one can think of $\You_{n,k}$ as of the set of binary sequences of length $n$ containing ``1'' exactly $k$ times.

Using the latter description, it is easy to construct a natural action of the group $\ZZ/n\ZZ$ on $\You_{n,k}$. This is just the cyclic action on the binary
sequences, where the generator $g$ of $\ZZ/n\ZZ$ acts by
\[
	g:a_1a_2\ldots a_n\ \mapsto\ a_na_1a_2\ldots a_{n-1},
\]
where $a_i\in\{0,1\}$. In the following we will call this action \textbf{cyclic}. For any $\lambda\in\You_{n,k}$ we denote its image under the action of the
generator $g$ by $\lambda'$ and call it the \textbf{shift} of $\lambda$. One can also describe $\lambda'$ in the usual presentation:
\[
\lambda' = \begin{cases}
	(\lambda_1+1,\lambda_2+1,\ldots,\lambda_k+1), & \text{if } \lambda_1<n-k, \\
	(\lambda_2,\lambda_3,\ldots,\lambda_k,0), & \text{if } \lambda_1=n-k.
\end{cases}
\]
The action of $g^d$ on $\lambda$ will be denoted by $\lambda^{(d)}$.

There is also an action of the group $\ZZ$ on all the diagrams with $k$ rows (and possibly negative entries) defined by
\[
	\lambda(t)=(\lambda_1+t,\lambda_2+t,\ldots,\lambda_k+t).
\]
We will say that $\lambda(t)$ is a \textbf{twist} of $\lambda$ by $t$.

\begin{definition}
	A diagram $\lambda\in\You_{n,k}$ is called \textbf{upper triangular} if it lies above the diagonal of the rectangle going from the upper right to the lower
	left corner. In the usual description it means that
\begin{equation}\label{eq:upper}
	\lambda_i\leq\frac{(n-k)(k-i)}{k}
\end{equation}
	for all $i=1,\ldots,k$. In a similar way one defines \textbf{lower triangular} diagrams. We denote these sets by $\Youu_{n,k}$ and $\Youl_{n,k}$ respectively.
\end{definition}

There is a nice geometric way to describe an orbit of the cyclic action. Take some diagram $\lambda\in\You_{n,k}$ and extend it $n$-periodically in both
directions. This is the same as extending the binary sequence representing~$\lambda$. To get any other diagram from the same orbit one should pick an
integer point on the extended path and draw a rectangle of size $k\times (n-k)$ using this point as the upper right corner. To see the cyclic action one should
move the point along the path in the south west direction. We will call the integer points on the extended path \textbf{vertices}.

The latter description of the orbits allows to prove easily the following lemma.
\begin{lemma}
	Every orbit of the cyclic action on $\You_{n,k}$ contains an upper triangular element.
\end{lemma}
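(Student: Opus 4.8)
The plan is to use the geometric description of the cyclic orbit via the $n$-periodically extended path. Fix $\lambda \in \You_{n,k}$ and extend its binary sequence $n$-periodically in both directions; this gives a bi-infinite lattice path in the plane with slope $k/(n-k)$ "on average", consisting of $n-k$ horizontal and $k$ vertical unit segments per period. Recall that the elements of the orbit of $\lambda$ correspond to the choice of a vertex $P$ on the extended path, which is declared the upper right corner of a $k \times (n-k)$ rectangle; the diagram read off is the region between the path and the rectangle. What I need to show is that \emph{some} choice of vertex $P$ yields a diagram that lies weakly above the anti-diagonal of the rectangle, i.e.\ satisfies the inequality $\lambda_i \le (n-k)(k-i)/k$ for all $i$.

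First I would set up coordinates: place the extended path in $\RR^2$ so that each horizontal step moves by $(1,0)$ and each vertical step by $(0,1)$, and let $\ell$ be the line through the origin of slope $k/(n-k)$ — equivalently, the line $k\cdot x - (n-k)\cdot y = 0$. Over one full period the path returns to $\ell$ (it moves by $(n-k, k)$), so the quantity $\phi(Q) = k\cdot x(Q) - (n-k)\cdot y(Q)$ is $n$-periodic along the vertices of the path; it measures the signed "height" of a vertex $Q$ above the line $\ell$. Now choose $P$ to be a vertex at which $\phi$ attains its \textbf{maximum} over all vertices (this maximum exists because $\phi$ takes only finitely many values, being periodic). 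The claim is that the rectangle with upper-right corner $P$ produces an upper triangular diagram.

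The key step is to translate "$P$ maximizes $\phi$" into the defining inequality~\eqref{eq:upper}. With $P$ as the upper-right corner, the lower-left corner of the rectangle is $P - (n-k, k)$, and $\phi(P-(n-k,k)) = \phi(P)$, so the anti-diagonal of the rectangle is exactly a segment of a line parallel to $\ell$ (the anti-diagonal runs from the upper-right corner to the lower-left corner, and both endpoints have the same value of $\phi$). The value $\lambda_i$ is the horizontal distance from the left edge of the rectangle to the path in the row at height $k-i$ from the bottom; chasing the definition, the point of the path on that row is a vertex $Q_i$ with $y(Q_i) = k-i$ (measured from the lower-left corner) and $x(Q_i) = \lambda_i$. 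The inequality $\lambda_i \le (n-k)(k-i)/k$ is then precisely the statement $k\cdot\lambda_i - (n-k)(k-i) \le 0$, i.e.\ $\phi(Q_i) \le \phi(P)$ after subtracting the common base value — which holds by maximality of $\phi(P)$. The one point requiring a little care is checking that the relevant vertex $Q_i$ on row $k-i$ really is a vertex of the path lying inside (or on the boundary of) the chosen rectangle, and that its coordinates relative to the rectangle are as claimed; this is a routine but slightly fiddly bookkeeping with the two descriptions (binary sequence versus usual sequence) of a diagram, and is the main obstacle to writing the argument cleanly. Everything else is immediate once the maximum-height vertex is singled out.
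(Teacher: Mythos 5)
Your proof is correct and is essentially the paper's argument: the paper selects the lowest of the lines of slope $k/(n-k)$ through the vertices of the extended path and places the upper-right corner on it, which is exactly your choice of a vertex maximizing the linear functional $\phi(Q)=k\,x(Q)-(n-k)\,y(Q)$. The coordinate bookkeeping you flag (that $Q_i=(\lambda_i,k-i)$ relative to the lower-left corner is a vertex of the path inside the rectangle) is routine and checks out, so nothing is missing.
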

\begin{proof}
	Draw all the lines with slope $k/(n-k)$ passing through all the vertices of the extended path. As the path is $n$-periodic, one will get at most $n$
	distinct lines and the path will lie above the lowest of them. Now draw the rectangle putting the upper right corner in any of the vertices lying on the
	lowest line. This will give the desired upper triangular element.
\end{proof}

A vertex on the extended path is called \textbf{u-admissible} (\textbf{l-admissible}) if it is the upper right corner of an upper (lower) triangular diagram.

There is an involution on the set of diagrams $\You_{n,k}$. Given a diagram $\lambda\in\You_{n,k}$, its \textbf{complement} $\lambda^c$ is defined in the
usual presentation by
\[
	\lambda^c=(n-k-\lambda_k, n-k-\lambda_{k-1},\ldots,n-k-\lambda_1).
\]
In the binary representation the sequence $a_1a_2\ldots a_{n-1}a_n$ is just mapped to its reverse $a_n a_{n-1}\ldots a_2a_1$.

One immediately sees that $(\lambda^c)^c=\lambda$. A nice fact is that taking the complement maps upper triangular diagrams to lower triangular and vice versa.
We will not need it in the following, but it is worth mentioning that $(\lambda^\prime)^c=(\lambda^c)^{(n-1)}$.

Given a diagram $\lambda$ with $k$ rows and arbitrary (possibly negative) entries one defines its negative by
\[
	-\lambda = (-\lambda_k,-\lambda_{k-1},\ldots,-\lambda_1).
\]
In these terms the complement can be defined as $\lambda^c=(-\lambda)(n-k)$ and $\Sigma^{\lambda}\CU=\Sigma^{-\lambda}\CU^*$.


\subsection{Orders and further characteristics} 
\label{sub:orders}

In the following we will need two order relations on $\You_{n,k}$. The first one is the partial inclusion order $\preceq$. Let us say that
\[
\lambda\preceq\mu\quad\text{if}\quad\lambda_i\leq\mu_i\text{ for all } i=1,\ldots,k.
\]
As usual, we say that $\lambda\prec\mu$ if $\lambda\preceq\mu$ and $\lambda\neq\mu$. The second one is
the lexicographical order $\leq$ which refines $\preceq$. Let us say that
\[
	\lambda<\mu\quad\text{if}\quad\lambda_i=\mu_i\text{ for } i=1,\ldots,t-1\quad\text{and}\quad \lambda_t<\mu_t\text{ for some }1\leq t\leq k.
\]

\begin{definition}
	An upper triangular diagram $\lambda\in\You_{n,k}$ is called \textbf{minimal} if it is the smallest among all the upper triangular elements in its orbit with
	respect to the lexicographical order. We denote the set of minimal upper triangular diagrams by $\Youmu_{n,k}$. Note that $\Youmu_{n,k}$ is indexing
	the orbits of the cyclic action. Given a diagram $\lambda\in\You_{n,k}$ we denote the length of its orbit by $o(\lambda)$.
\end{definition}

\begin{example}
	Consider the set $\You_{6,3}$. The orbits of the $\ZZ/6\ZZ$ action are the following:{\tiny
\[{\Yvcentermath1
	\begin{array}{lllllll}
		1. & \text{empty} & \yng(1,1,1) & \yng(2,2,2) & \yng(3,3,3) & \yng(3,3)   & \yng(3) \\ \\
		2. & \yng(1)      & \yng(2,1,1) & \yng(3,2,2) & \yng(2,2)   & \yng(3,3,1) & \yng(3,1) \\ \\
		3. & \yng(1,1)    & \yng(2,2,1) & \yng(3,3,2) & \yng(3,2)   & \yng(2)     & \yng(3,1,1) \\ \\
		4. & \yng(2,1)    & \yng(3,2,1) & & & & \\
	\end{array}
}\]}
	The set of upper triangular diagrams is:
\[{\Yvcentermath1
	\Youu_{n,k}=\mbox{\tiny $\left\{\text{empty},\ \yng(1),\ \yng(1,1),\ \yng(2),\  \yng(2,1)\right\}$}.
}\]
	The set of minimal upper triangular diagrams is:
\[{\Yvcentermath1
	\Youmu_{n,k}=\mbox{\tiny $\left\{\text{empty},\ \yng(1),\ \yng(1,1),\  \yng(2,1)\right\}$}.
}\]
	Upper triangular diagrams {\tiny $\yng(1,1)$} and {\tiny $\yng(2)$} lie in the same orbit and the first one is the minimal one.
\end{example}

There is a natural associative noncommutative operation
\[
\oplus:\You_{n,k}\times \You_{m,l}\to \You_{n+m,k+l}
\]
which is the concatenation of the binary sequences representing the diagrams. Namely, for diagrams $a\in\You_{n,k}$ and $b\in\You_{m,l}$ one has
\[
	a\oplus b = a_1 a_2\ldots a_n b_1 b_2\ldots b_m
\]
in the binary presentation $a=a_1 a_2\ldots a_n$, $b=b_1 b_2\ldots b_m$ and
\[
	a\oplus b = (\mu_1+(n-k),\mu_2+(n-k),\ldots,\mu_l+(n-k),\lambda_1,\lambda_2, \ldots,\lambda_k)
\]
in the usual presentation $a=(\lambda_1,\lambda_2,\ldots,\lambda_k)$, $b=(\mu_1,\mu_2,\ldots,\mu_l)$.

\begin{lemma}
	Given $a,a^\prime\in\You_{n,k}$ and $b,b^\prime\in\You_{m,l}$, we have the following:
	\begin{itemize}
		\item $b\oplus a<b^\prime\oplus a^\prime$ if and only if $a<a^\prime$, or $a=a^\prime$ and $b<b^\prime$;
		\item $b\oplus a\preceq b^\prime\oplus a^\prime$ if and only if $a\preceq a^\prime$ and $b\preceq b^\prime$.
	\end{itemize}
\end{lemma}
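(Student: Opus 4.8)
The plan is to reduce both equivalences to a direct inspection of the formula for $\oplus$ in the usual presentation. Write $a=(\lambda_1,\ldots,\lambda_k)$, $a'=(\lambda'_1,\ldots,\lambda'_k)$ in $\You_{n,k}$ and $b=(\mu_1,\ldots,\mu_l)$, $b'=(\mu'_1,\ldots,\mu'_l)$ in $\You_{m,l}$. Since the first argument of $b\oplus a$ lies in $\You_{m,l}$, applying the displayed formula for $\oplus$ (with $n,k$ there replaced by $m,l$) gives
\[
	b\oplus a=(\lambda_1+(m-l),\ldots,\lambda_k+(m-l),\ \mu_1,\ldots,\mu_l),
\]
and similarly for $b'\oplus a'$. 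So in $b\oplus a$ the first $k$ coordinates are the entries of $a$ shifted up by the constant $m-l$, and the last $l$ coordinates are exactly the entries of $b$.

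For the second bullet, $b\oplus a\preceq b'\oplus a'$ asserts the coordinatewise inequality in all $k+l$ positions. In the first $k$ positions this says $\lambda_i+(m-l)\le\lambda'_i+(m-l)$, i.e. $\lambda_i\le\lambda'_i$ for $1\le i\le k$, which is precisely $a\preceq a'$; in the last $l$ positions it says $\mu_j\le\mu'_j$ for $1\le j\le l$, which is $b\preceq b'$. Hence the coordinatewise inequality holds if and only if both $a\preceq a'$ and $b\preceq b'$, which is the claim.

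For the first bullet, observe that translating two integers by the same constant $m-l$ changes neither whether they are equal nor the direction of a strict inequality; consequently the lexicographic comparison of the first $k$ coordinates of $b\oplus a$ and $b'\oplus a'$ agrees verbatim with the lexicographic comparison of $a$ and $a'$. Thus if $a\ne a'$, the leading discrepancy between $b\oplus a$ and $b'\oplus a'$ already occurs among the first $k$ coordinates, so $b\oplus a<b'\oplus a'$ holds exactly when $a<a'$; while if $a=a'$ the first $k$ coordinates coincide, the comparison passes to the last $l$ coordinates, and there $b\oplus a<b'\oplus a'$ holds exactly when $b<b'$. Combining the two cases yields $b\oplus a<b'\oplus a'$ if and only if $a<a'$, or $a=a'$ and $b<b'$.

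There is no serious obstacle here; it is a one-line unwinding of definitions once the usual-presentation formula has been written out. The only point that needs care — and the reason the stated criterion reads ``$a<a'$, or $a=a'$ and $b<b'$'' rather than with the roles of $a$ and $b$ exchanged — is that in $b\oplus a$ it is the \emph{second} argument $a$ whose (shifted) entries occupy the top $k$ rows, hence the leading coordinates of the tuple, so it is $a$ that controls the lexicographic comparison, with $b$ acting only as a tie‑breaker.
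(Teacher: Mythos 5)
Your proof is correct: it is exactly the direct coordinate-by-coordinate verification that the paper dismisses with the single word ``Obvious,'' and your identification of which argument of $\oplus$ occupies the leading coordinates (and hence drives the lexicographic comparison) is right. Nothing further is needed.
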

\begin{proof}
	Obvious.
\end{proof}

\newpage
Finally, we will need several characteristics of different types of diagrams.
\begin{itemize}
	\item Given a diagram $\lambda\in\You_{n,k}$, its \textbf{slope} is defined by $s(\lambda)=k/(n-k)$. One immediately checks that the sum of two upper (resp.
	lower) triangular diagrams of equal slope is again upper (resp.~lower) triangular.
	\item Given a diagram $\lambda\in\Youu_{n,k}$, let $r(\lambda)$ denote the length of the path going from the upper right corner of the rectangle to the
	rightmost vertex of the path that lies on the diagonal and is distinct from the initial one. For example, if $\lambda$ is strictly upper triangular,
	$r(\lambda)=n$.
	\item Given a diagram $\lambda\in\Youl_{n,k}$, let $l(\lambda)$ denote the length of the path going from the lower left corner of the rectangle to the
	leftmost vertex of the path that lies on the diagonal and is distinct from the initial one. In particular, note that $\lambda^c\in\Youu_{n,k}$ and
	$l(\lambda)=r(\lambda^c)$.
	\item Given a diagram $\lambda\in\You_{n,k}$, let $d(\lambda)$ denote the smallest $d\geq 0$, such that $\lambda^{(d)}$ is lower triangular. This is the
	same as the length of the path going from the upper right corner of the rectangle to the rightmost l-admissible vertex of the diagram. In particular, if
	$\lambda\in\Youl_{n,k}$ then $d(\lambda)=0$.
	\item Given a diagram $\lambda\in\You_{n,k}$, let $e(\lambda)$ denote the length of the path going from the lower left corner to the leftmost
	l-admissible vertex that is distinct from the initial one. In particular, $e(\lambda)>0$ and whenever $\lambda\in\Youl_{n,k}$, we have
	$e(\lambda)=l(\lambda)$.
\end{itemize}



\section{Lefschetz decompositions for Grassmannians} 
\label{sec:lefschetz_decompositions_for_grassmannians}

From this moment $X=\Gr(k,V)$ will denote the Grassmannian of $k$-dimensional subspaces of an $n$-dimensional vector space $V$. Let $\CU\subset V\otimes\CO_X$
denote the tautological subbundle of rank $k$. We have the following (mutually dual) short exact sequences of vector bundles on $X$:
\[
0 \to \CU \to V\otimes\CO_X \to V/\CU \to 0,\qquad
0 \to \CU^\perp \to V^*\otimes\CO_X \to \CU^* \to 0,
\]
where $\CU^\perp=(V/\CU)^*$. Recall that $\Sigma^{1,\ldots,1}\CU^* \simeq \Sigma^{-1,\ldots,-1}\CU^\perp \simeq \CO_X(1)$ is the positive generator of
$\Pic X$.

\subsection{Statement of the main result} 
\label{sub:statement_of_the_main_result}

We introduce two collections of subcategories in $\D^b(X)$. The first one is defined by
\[
	\CA_i=\left<\Sigma^\lambda\CU^* \mid \lambda\in\Youmu_{n,k},\ i<o(\lambda)\right>,\text{ for } i=0,\ldots,n-1.
\]
The second one is the following:
\[
	\CB_i=\left<\Sigma^\lambda\CU^* \mid \lambda\in\Youu_{n,k},\ i<r(\lambda)\right>,\text{ for } i=0,\ldots,n-1.
\]
One immediately notes that $\CA_0\supset\CA_1\supset\ldots\supset\CA_{n-1}$ and $\CB_0\supset\CB_1\supset\ldots\supset\CB_{n-1}$.

We expect that both of these collections give a Lefschetz decomposition of $\D^b(X)$. However, at the moment we can prove fullness only for $\CB_i$.

The following two theorems are the main results of this paper.

\begin{theorem}\label{theorem:brav}
	The categories $\CB_i$ form a Lefschetz decomposition of $\D^b(X)$. The exceptional collection $\left(\Sigma^\lambda\CU^*\mid \lambda\in\Youu_{n,k}\right)$
	is a Lefschetz basis of this decomposition with the support function $r(\lambda)$.
\end{theorem}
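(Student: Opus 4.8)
The plan is to verify the two conditions of Lemma~\ref{lemma:basis} for the exceptional collection $\left(\Sigma^\lambda\CU^*\mid\lambda\in\Youu_{n,k}\right)$ with support function $r$. First I would check that this is genuinely an exceptional collection in some order refining the lexicographical order $\leq$ on $\Youu_{n,k}$; since $\Youu_{n,k}\subset\You_{n,k}$ and Kapranov's collection $\{\Sigma^\lambda\CU\mid\lambda\in\You_{n,k}\}$ is exceptional (after dualizing, $\{\Sigma^\lambda\CU^*\}$ with the reversed order is exceptional), this reduces to bookkeeping: one must confirm that the lexicographical order on diagrams is compatible with the order making Kapranov's collection exceptional, using Corollary~\ref{crl:coh} and Theorem~\ref{thm:bbw} to see that $\Ext^\bullet(\Sigma^\lambda\CU^*,\Sigma^\mu\CU^*)=0$ when $\mu<\lambda$. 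The more substantive part of condition~(2) is to show $\Ext^\bullet(\Sigma^\lambda\CU^*(k),\Sigma^\mu\CU^*)=0$ for all $\lambda,\mu\in\Youu_{n,k}$ and $0<k<r(\lambda)$. Here I would write $\Sigma^\lambda\CU^*(k)\otimes(\Sigma^\mu\CU^*)^*=\Sigma^\lambda\CU^*\otimes\Sigma^{-\mu}\CU\otimes\CO_X(k)$, decompose into irreducibles, and apply Borel--Bott--Weil via Corollary~\ref{crl:coh}: vanishing will follow once I show that every irreducible summand $\Sigma^\alpha\CU^*\otimes\Sigma^\gamma\CU^\perp$ occurring has $\alpha+\rho'$ (on the $\CU^*$ side) possessing a repeated entry, or that the concatenated weight $(\alpha,\gamma)+\rho$ has a repetition. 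The bound in Lemma~\ref{lm:lrrule}, combined with the upper-triangularity inequality~\eqref{eq:upper} for both $\lambda$ and $\mu$ and the defining inequality $k<r(\lambda)$ (which controls how far the twist $\CO_X(k)$ can push the diagonal), should force exactly such a coincidence; this is the computational heart of the semi-orthogonality argument, and I would organize it by tracking, along the extended periodic path, the position of the relevant vertex relative to the diagonal.

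The condition~(1) — that the $\CB_i(i)$ generate $\D^b(X)$, i.e. fullness — is the genuinely hard part, and I expect it to require the machinery promised in Section~\ref{sec:fullness}: a new class of exact complexes of equivariant bundles. The strategy I would pursue is to show, by descending induction on the lexicographic order, that every Kapranov generator $\Sigma^\nu\CU$ (equivalently $\Sigma^{-\nu}\CU^*$) for $\nu\in\You_{n,k}$ lies in $\left<\CB_0,\CB_1(1),\ldots,\CB_{n-1}(n-1)\right>$. Since the category $\CB_0$ is spanned by the upper-triangular diagrams, and every $\You_{n,k}$-orbit of the cyclic $\ZZ/n\ZZ$-action contains an upper-triangular element (the lemma proved in Section~\ref{sec:combinatorics_of_young_diagrams}), the idea is that twisting by $\CO_X(i)$ moves along the orbit: a diagram $\lambda^{(i)}$ is (up to the appropriate twist) $\Sigma^\lambda\CU^*(i)$, so the union of the twisted blocks $\CB_i(i)$ should cover all orbits. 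The role of $r(\lambda)$ is precisely to say how many consecutive twists of $\Sigma^\lambda\CU^*$ stay "available" before the shifted diagram ceases to be upper triangular, and the exact complexes are what let one express a non-upper-triangular $\Sigma^\nu\CU$ through upper-triangular ones and their twists — this is where the combinatorial characteristics $r,d,e$ and the geometry of the extended path enter decisively.

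The main obstacle is thus the construction and exactness of these resolutions in Section~\ref{sec:fullness}: one needs, for each $\nu\in\You_{n,k}$ that is not already accounted for, an exact complex whose terms are (twists of) the bundles $\Sigma^\lambda\CU^*$ with $\lambda$ upper triangular and the twist within the allowed range $<r(\lambda)$, so that $\Sigma^\nu\CU$ is forced into the subcategory generated by the earlier terms by the standard "if all but one term of an exact complex lie in a triangulated subcategory, so does the last" argument. Verifying exactness of such complexes typically amounts to a Borel--Bott--Weil computation on the flag variety — pushing a Koszul-type or tautological complex down via $\pi$ and checking that the higher direct images vanish — so the technical core is again an application of Theorem~\ref{thm:bbw}, but now assembled into a global acyclicity statement rather than a pointwise vanishing. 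Once these complexes are in hand, fullness follows formally, and together with the semi-orthogonality established above, Lemma~\ref{lemma:basis} yields the theorem; the identification of the blocks $\CB_i$ with $\vphantom{\CB_0(-i)}^\perp\CB_0(-i)\cap\CB_{i-1}$, hence the "Lefschetz decomposition" conclusion, is then automatic from the remark following the definition of a Lefschetz basis.
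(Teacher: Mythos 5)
Your semi-orthogonality plan is essentially the paper's own argument: apply Lemma~\ref{lemma:basis}, decompose $\Sigma^\lambda\CU\otimes\Sigma^\mu\CU^*(-t)$ by Littlewood--Richardson, and use Borel--Bott--Weil together with the upper-triangularity bounds~\eqref{eq:upper} to force $t=\frac{n(k-j)}{k}$ and $\lambda_{k-j}=\frac{(n-k)j}{k}$, i.e.\ $t\geq r(\lambda)$. That part is fine as a plan, modulo carrying out the inequality chase.

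The fullness half, however, has a genuine gap: you correctly guess the \emph{shape} of the argument (exact complexes expressing arbitrary $\Sigma^\nu\CU^*$ through allowed twists of generators, then the standard ``all but one term'' closure argument), but you defer precisely the two steps that constitute the proof. First, the exact complexes are not a generic Koszul or tautological complex pushed down from the flag variety; they are the \emph{staircase complexes} of Proposition~\ref{les}, namely for $\lambda_1=n-k$ an exact sequence
\[
0\to\Sigma^{\lambda^\prime}\CU^*(-1)\to \Lambda^{\nu_{n-k}}V^*\otimes\Sigma^{\mu_{n-k}}\CU^*\to \cdots\to \Lambda^{\nu_1}V^*\otimes\Sigma^{\mu_1}\CU^*\to\Sigma^\lambda\CU^*\to 0,
\]
obtained by running Kapranov's generalized Beilinson spectral sequence (Theorem~\ref{specseq}) on the single sheaf $\Sigma^{\lambda^\prime}\CU(1)$ and checking via Borel--Bott--Weil that exactly $n-k+1$ terms survive with total degrees $-(n-k),\dots,0$, so the spectral sequence degenerates to a long exact sequence. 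Identifying this input is the main construction of Section~\ref{sec:fullness} and cannot be waved through as ``typically a BBW computation.'' Second, once you iterate these resolutions you must prove (a) that the recursion terminates and (b) that every bundle produced is an \emph{allowed} twist of a generator. The paper handles this by first dualizing (Lemma~\ref{lemma:b_prime}) to the blocks $\CBp_i$ indexed by $\Youl_{n,k}$ with support function $l$, and then tracking two combinatorial invariants along the expansion: $d(\mu)$ strictly decreases at each step (termination after $d(\lambda)$ rounds), and the inequality $t\leq e(\mu)-e(\lambda)$ together with $e(\lambda)>0$ gives $t<l(\mu)$ (the twist stays in range). Your proposed descending induction on the lexicographic order is not the induction that works here; without the $d$- and $e$-bookkeeping the ``formal'' conclusion you invoke does not follow, because nothing prevents the resolution process from producing a generator with a twist outside its support.
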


\begin{example}\label{ex:gr36brav}
	Consider the case $X=\Gr(3,6)$. We get a Lefschetz basis
\[
	(\CO_X, \CU^*, \Lambda^2\CU^*, S^2\CU^*, \Sigma^{(2,1,0)}\CU^*)
\]
	with the following values of the support function:
\[
	(6, 6, 4, 2, 2).
\]
	In other words, there is a full Lefschetz exceptional collection
\[
	\left(
	\begin{array}{rrrrrr}
		\Sigma^{(2,1,0)}\CU^* & \Sigma^{(2,1,0)}\CU^*(1) & & & & \\
		S^2\CU^* & S^2\CU^*(1) & & & & \\
		\Lambda^2\CU^* & \Lambda^2\CU^*(1) & \Lambda^2\CU^*(2) & \Lambda^2\CU^*(3) & & \\
		\CU^* & \CU^*(1) & \CU^*(2) & \CU^*(3) & \CU^*(4) & \CU^*(5) \\
		\CO_X & \CO_X(1) & \CO_X(2) & \CO_X(3) & \CO_X(4) & \CO_X(5) \\
	\end{array}
	\right).
\]
	Here objects standing in the same column generate blocks of the corresponding decomposition.
\end{example}

\begin{theorem}\label{theorem:me}
	The categories $\CA_i(i)$ are semi-orthogonal. In other words, there is a Lefschetz decomposition
\[
	\left<\CA_0,\CA_1(1),\ldots,\CA_{n-1}(n-1)\right>=\CA\subset\D^b(X)
\]
	of some full triangulated subcategory $\CA\subset\D^b(X)$ with a Lefschetz basis given by the exceptional collection $\left(\Sigma^\lambda\CU^*\mid \lambda\in\Youmu_{n,k}\right)$
	and the support function $o(\lambda)$.
\end{theorem}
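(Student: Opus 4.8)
By Lemma~\ref{lemma:basis}, to prove that $\left(\Sigma^\lambda\CU^*\mid\lambda\in\Youmu_{n,k}\right)$ is a Lefschetz basis with support function $o(\lambda)$ we must (after discarding the fullness condition, which is not claimed here) verify the single vanishing
\[
\Ext^\bullet\!\left(\Sigma^\lambda\CU^*(t),\,\Sigma^\mu\CU^*\right)=0
\]
for all $\lambda,\mu\in\Youmu_{n,k}$ and all $0<t<o(\lambda)$. The plan is to reduce this to a cohomology computation on $X$ and then run Borel--Bott--Weil. First I would use
$\Ext^\bullet(\Sigma^\lambda\CU^*(t),\Sigma^\mu\CU^*)=H^\bullet\!\left(X,\Sigma^\mu\CU^*\otimes\Sigma^\lambda\CU\otimes\CO_X(-t)\right)$ and the identity $\Sigma^\lambda\CU\simeq\Sigma^{-\lambda}\CU^*$ to rewrite the bundle as $\Sigma^\mu\CU^*\otimes\Sigma^{-\lambda}\CU^*\otimes\CO_X(-t)$; after twisting by $t$ this is a sum of bundles $\Sigma^{\nu}\CU^*$ with $\nu$ ranging over the Littlewood--Richardson summands of $\Sigma^\mu\CU^*\otimes\Sigma^{-\lambda(t)}\CU^*$, where I absorb the twist into $-\lambda$ to get the diagram $-\lambda(t)=(-\lambda_k-t,\dots,-\lambda_1-t)$.

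Next I would invoke Lemma~\ref{lm:lrrule} to control the entries of each summand $\nu$: one gets $\mu_i-\lambda_1-t\le\nu_i\le\mu_1-\lambda_{k+1-i}-t$ (reindexing the second diagram appropriately), so every $\nu$ satisfies a two-sided bound depending only on $\mu$, $\lambda$ and $t$. By Corollary~\ref{crl:coh} the cohomology of $\Sigma^\nu\CU^*$ equals $H^\bullet(X,L_{(\nu,0,\dots,0)})$ on the full flag variety, computed by Theorem~\ref{thm:bbw}: it vanishes unless all entries of $(\nu,0,\dots,0)+\rho=(\nu_1+n,\dots,\nu_k+n-k+1,\,n-k,n-k-1,\dots,1)$ are distinct. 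So it suffices to show that for $0<t<o(\lambda)$, \emph{every} Littlewood--Richardson summand $\nu$ produces a repeated entry in $\nu+\rho$, i.e. some $\nu_i+(n-i+1)$ lands in $\{1,2,\dots,n-k\}$ or coincides with another $\nu_j+(n-j+1)$. This is where the hypothesis that $\lambda$ and $\mu$ are \emph{minimal upper triangular} and that $t<o(\lambda)$ must be used decisively, and I expect it to be the main obstacle.

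The key geometric input is the orbit picture from Section~\ref{sec:combinatorics_of_young_diagrams}: $o(\lambda)$ is the length of the cyclic orbit of $\lambda$, equivalently the minimal period of the $n$-periodically extended lattice path. The twist by $t$ with $0<t<o(\lambda)$ corresponds to moving the upper-right corner along the extended path by $t$ steps \emph{without} completing a full period, so the resulting (possibly out-of-rectangle) path is a genuinely different shift $\lambda^{(t)}$, and crucially it is \emph{not} upper triangular when $\lambda$ is the minimal representative of its orbit — or rather, combining upper-triangularity of $\lambda$ and $\mu$ with the slope bookkeeping $s(\lambda)=s(\mu)=k/(n-k)$, the shift forces the diagonal-crossing behaviour recorded by the characteristic functions $r,d,e$ of Section~\ref{sub:orders}. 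I would translate the Borel--Bott--Weil non-vanishing condition "all entries of $\nu+\rho$ distinct for some LR-summand $\nu$'' into the statement "the path of $-\lambda(t)$ and the path of $\mu$ fit together into a path of $\You_{n,k}$ (after the shift) with the upper-right corner at a vertex that is the corner of an upper-triangular diagram'', and then show that minimality of $\lambda$ together with $t<o(\lambda)$ makes this impossible. Concretely: a nonzero $\Ext$ would produce, via the LR bounds and BBW, an upper-triangular diagram in the orbit of $\lambda$ strictly smaller (lexicographically) than $\lambda$, or one obtained by a shift strictly between $0$ and $o(\lambda)$, contradicting $\lambda\in\Youmu_{n,k}$.

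Finally I would assemble these pieces: for fixed $\lambda,\mu\in\Youmu_{n,k}$ and $0<t<o(\lambda)$, take an arbitrary LR-summand $\nu\subset\Sigma^\mu\CU^*\otimes\Sigma^{-\lambda(t)}\CU^*$, apply the entry bounds of Lemma~\ref{lm:lrrule}, and check by the pigeonhole/diagonal argument above that $\nu+\rho$ has a coincidence, so $H^\bullet(X,\Sigma^\nu\CU^*)=0$ by Theorem~\ref{thm:bbw} and Corollary~\ref{crl:coh}; summing over $\nu$ gives $\Ext^\bullet(\Sigma^\lambda\CU^*(t),\Sigma^\mu\CU^*)=0$. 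By Lemma~\ref{lemma:basis}(2) this is exactly semi-orthogonality of the blocks $\CA_i(i)$, so they form a Lefschetz decomposition of the triangulated subcategory $\CA=\left<\CA_0,\dots,\CA_{n-1}(n-1)\right>$ they generate, with the claimed Lefschetz basis and support function. The main obstacle, as indicated, is the combinatorial core: proving that \emph{no} Littlewood--Richardson summand survives BBW, which requires a careful simultaneous analysis of the two minimal upper-triangular paths under a sub-orbit-length shift — I would expect to organize this using the vertex/u-admissibility language and the additivity of the slope, possibly splitting into the two cases in the formula for $\lambda'$ (whether or not $\lambda_1=n-k$) and inducting on $t$.
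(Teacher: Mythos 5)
Your setup is exactly the paper's: reduce via Lemma~\ref{lemma:basis} to the vanishing of $\Ext^\bullet(\Sigma^\lambda\CU^*(t),\Sigma^\mu\CU^*)$ for $\lambda,\mu\in\Youmu_{n,k}$ and $0<t<o(\lambda)$, decompose $\Sigma^{-\lambda}\CU^*\otimes\Sigma^\mu\CU^*(-t)$ by Littlewood--Richardson, bound the summands by Lemma~\ref{lm:lrrule}, and kill each one by Borel--Bott--Weil. But the decisive step --- the one that actually uses minimality of $\lambda,\mu$ and the hypothesis $t<o(\lambda)$ rather than $t<r(\lambda)$ --- is only announced as ``the main obstacle'' and never carried out. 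What you are missing is, first, the bridge from BBW to combinatorics: running the inequalities as in Proposition~\ref{prop:exc_b} shows that a surviving summand forces $t=n(k-j)/k\in\ZZ$, $\lambda_{k-j}=(n-k)j/k$ \emph{and} $\mu_j=(k-j)(n-k)/k$, so that \emph{both} diagrams touch the diagonal and split as concatenations $\lambda=b'\oplus a'$, $\mu=b\oplus a$ with matching slopes. Your heuristic that a nonzero $\Ext$ ``would produce an upper-triangular diagram in the orbit of $\lambda$ strictly smaller than $\lambda$'' is not what happens: the contradiction in the paper is an interplay between the two decompositions of $\lambda$ and $\mu$ simultaneously, not a statement about the orbit of $\lambda$ alone.

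Second, to compare the pieces one needs more than the entrywise bounds of Lemma~\ref{lm:lrrule}: the paper's Lemma~\ref{nonzlm} (if an LR summand of $\Sigma^\lambda\CU\otimes\Sigma^\mu\CU^*$ has all entries nonnegative then $\lambda\preceq\mu$) is applied to the split pieces to obtain the dominance relations $a\succeq b'$ and $a'\succeq b$; your proposal never derives anything of this kind. Finally, the actual contradiction is a nontrivial case analysis on these relations: the strict case gives a lexicographic cycle $\lambda<a'\oplus b'<b\oplus a\leq a\oplus b<\lambda$; the boundary cases ($a'=b$ or $a=b'$) require extracting a periodic block, i.e.\ showing $b=m^p$ and $\mu=m^{p+q}$ for a primitive $m$ and then iteratively peeling off copies of $m^p$ until minimality of $\lambda$ is violated; and the case $a=b'$, $a'=b$ forces $\lambda$ and $\mu$ into the same orbit, contradicting $t<o(\lambda)$. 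None of this is present in your write-up, and it cannot be replaced by the ``vertex/u-admissibility'' picture you gesture at without essentially reconstructing the same argument. So the proposal is a correct plan with the same skeleton as the paper, but the proof of the theorem's actual content is absent.
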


Theorem~\ref{theorem:me} is followed by two important conjectures.

\begin{conjecture}\label{conj:full}
	The categories $\CA_i(i)$ generate $\D^b(X)$. In other words, the categories $\CA_i$ form a Lefschetz decomposition of $\D^b(X)$.
\end{conjecture}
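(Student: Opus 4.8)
\emph{Strategy and reduction.} The plan is to reduce the conjecture to showing that $\CA$ contains a known set of generators of $\D^b(X)$, and then to produce those generators inside $\CA$ by splicing exact complexes, in the spirit of the proof of Theorem~\ref{theorem:brav}. By Theorem~\ref{theorem:brav} the subcategories $\CB_i(i)$ already generate $\D^b(X)$; concretely, $\D^b(X)$ is generated by the bundles $\Sigma^\lambda\CU^*(i)$ with $\lambda\in\Youu_{n,k}$ and $0\le i<r(\lambda)$, and equivalently (by the dual form of Kapranov's theorem) by the bundles $\Sigma^\mu\CU^*$ with $\mu\in\You_{n,k}$. Since $\CA$ is a full triangulated subcategory of $\D^b(X)$, it suffices to prove that each of these bundles lies in $\CA$. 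Many of them do so by construction: if $\lambda\in\Youmu_{n,k}$ and $i<o(\lambda)$, then $\Sigma^\lambda\CU^*(i)$ is one of the chosen generators of $\CA_i$. So the whole content sits in the remaining bundles --- those indexed by upper triangular diagrams that are not minimal in their cyclic orbit, together with their admissible twists.

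\emph{Induction via exact complexes.} Fix $\mu\in\You_{n,k}$ and let $\nu\in\Youmu_{n,k}$ be the minimal upper triangular element in the cyclic orbit of $\mu$, so that $\mu=\nu^{(d)}$ for a unique $0\le d<o(\nu)$. In the usual presentation the generator $g$ of the cyclic action raises the first coordinate by one (equivalently, acts on the associated bundle as $\otimes\,\CO_X(1)$) as long as that coordinate is $<n-k$, and only wraps around when it equals $n-k$. Hence, if $d\le n-k-\nu_1$, then $\mu=\nu(d)$ literally and $\Sigma^\mu\CU^*=\Sigma^\nu\CU^*(d)$ is a generator of $\CA$, with nothing to prove. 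If a wrap-around has occurred, the natural move is a Koszul-type exact complex that trades $\Sigma^\mu\CU^*$ for the bundle attached to the diagram one step earlier along the orbit (up to a twist by $\CO_X(1)$), the intermediate terms being bundles of the form $\Sigma^{(\cdots)}\CU^*\otimes\Lambda^\bullet V$ obtained from the Schur-functor versions of the tautological sequences $0\to\CU\to V\otimes\CO_X\to V/\CU\to0$ together with the new exact complexes constructed in Section~\ref{sec:fullness}. Iterating this walks $\mu$ back along its orbit to $\nu$, a generator of $\CA$; provided every intermediate term is either a generator of $\CA$ or a bundle that is strictly smaller in a suitable well-ordering of $\You_{n,k}$, splicing the complexes inside the triangulated category gives $\Sigma^\mu\CU^*\in\CA$, which completes the induction and the proof.

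\emph{Main obstacle.} The difficulty is purely combinatorial, and it is exactly what separates this conjecture from Theorem~\ref{theorem:brav}. In the Brav--Thomas decomposition the blocks $\CB_i$ carry the generous support function $r(\lambda)$, which is large enough that every intermediate term appearing in the exact complexes of Section~\ref{sec:fullness} automatically lands in the block where it is needed. The minimal decomposition $\CA_i$ has strictly smaller blocks --- already for $\Gr(3,6)$ the block $\CA_0$ is generated by four bundles against five for $\CB_0$, and the higher blocks $\CA_i$ and $\CB_i$ are not nested --- so one has to verify, for every exact complex used in the induction, that each of its terms can be placed in the appropriate block $\CA_j$, i.e.\ that the slopes, the twists, and the characteristics $r$, $d$, $e$ of the diagrams involved interact so that the tighter constraints of the minimal blocks are never violated; and when a naive choice of complex violates them, one needs a more economical exact complex (or a mutation) returning the offending term to $\CA$. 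A further subtlety is that $\CA$ is not a priori stable under $\otimes\,\CO_X(1)$, so membership statements cannot simply be twisted along the orbit. Producing such a refined family of exact complexes, proving their exactness, and checking that the resulting induction actually closes, is the point at which the argument presently stops --- whence the conjectural status.
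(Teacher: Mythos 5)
The statement you are proving is stated as a conjecture in the paper, and the paper does not prove it in general: it establishes only the case $(n,k)=1$ (where $\CA_i=\CB_i$ and everything follows from Theorem~\ref{theorem:brav}) and the single case $X=\Gr(3,6)$, by an explicit computation. Your text is likewise not a proof but a strategy outline --- you say so yourself in the last paragraph --- so there is a genuine gap, namely the entire combinatorial core of the argument. For the record, the strategy you describe is the correct one and is exactly what the paper carries out in the cases it can handle: dualize as in Lemma~\ref{lemma:b_prime} to replace $\CA_i(i)$ by blocks indexed by \emph{minimal lower triangular} diagrams $\Youml_{n,k}$ with twists $-t$, $0\le t<o(\lambda)$; reduce fullness to showing that each Kapranov generator $\Sigma^\mu\CU^*$ admits a resolution by such bundles; and build the resolutions by iterating the staircase complexes~\eqref{eq:staircase} of Proposition~\ref{les}, the induction running over the quantity $d(\cdot)$ of Section~\ref{sub:orders} (the number of shifts needed to reach a lower triangular diagram), which strictly decreases for every term of the staircase complex.

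The precise point where this breaks --- and where your outline stops --- is the control of the accumulated twists. In Proposition~\ref{prop:bp_full} the bound $t\le e(\mu)-e(\lambda)<l(\mu)$ guarantees that every term of the iterated expansion lands in an allowed block of the $\CB$-decomposition; the analogous requirement for the $\CA$-decomposition is $t<o(\mu)$, and this already fails in $\Gr(3,6)$: expanding $\Sigma^{(3,3,1)}\CU^*$ produces the offending terms $\Sigma^{(3,2,1)}\CU^*(-2)$ and $\Sigma^{(3,2,1)}\CU^*(-3)$ with $o((3,2,1))=2$, which the paper then disposes of by one further, ad hoc staircase complex. No general mechanism for repairing such offending terms is known, and that is exactly why the statement is a conjecture. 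Your observation that $\CA$ is not a priori closed under $\otimes\,\CO_X(1)$ is correct and is another way of saying the same thing. One small caution about your reduction: when no wrap-around occurs the shift acts on diagrams as the twist by $\CO_X(1)$, so $\Sigma^{\nu^{(d)}}\CU^*\simeq\Sigma^\nu\CU^*(d)$ only up to that twist; this is harmless for generating $\D^b(X)$ from Kapranov's collection, but it means the nontrivial work is concentrated entirely in the wrap-around steps, i.e.\ in the staircase complexes, as you correctly identify.
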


\begin{example}
	In the case $X=\Gr(3,6)$ this would give a Lefschetz exceptional collection
\[\left(
	\begin{array}{rrrrrr}
		\Sigma^{(2,1,0)}\CU^* & \Sigma^{(2,1,0)}\CU^*(1) & & & & \\
		\Lambda^2\CU^* & \Lambda^2\CU^*(1) & \Lambda^2\CU^*(2) & \Lambda^2\CU^*(3) & \Lambda^2\CU^*(4) & \Lambda^2\CU^*(5) \\
		\CU^* & \CU^*(1) & \CU^*(2) & \CU^*(3) & \CU^*(4) & \CU^*(5) \\
		\CO_X & \CO_X(1) & \CO_X(2) & \CO_X(3) & \CO_X(4) & \CO_X(5) \\
	\end{array}\right).
\]
	On can compare this collection to the one from Example~\ref{ex:gr36brav} and see that the objects $S^2\CU^*$ and $S^2\CU^*(1)$ were replaced by
	$\Lambda^2\CU^*(4)$ and $\Lambda^2\CU^*(5)$, making first block of the collection smaller.
\end{example}

As we mentioned before, an important question is to construct not arbitrary, but minimal Lefschetz decompositions. We state the following.

\begin{conjecture}\label{conj:min}
	The categories $\CA_i$ form a minimal Lefschetz decomposition of $\D^b(X)$.
\end{conjecture}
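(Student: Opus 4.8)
The plan is to show that the first block $\CA_0 = \langle \Sigma^\lambda\CU^* \mid \lambda\in\Youmu_{n,k}\rangle$ cannot be shrunk, i.e.\ that no Lefschetz decomposition of $\D^b(X)$ has a strictly smaller first block. The natural strategy is numerical: pass to the Grothendieck group $K_0(X)$ (or, equivalently, to cohomology / the Euler pairing) and count. Any Lefschetz decomposition $\D^b(X) = \langle \CB'_0, \CB'_1(1), \ldots, \CB'_{m-1}(m-1)\rangle$ gives a decomposition of $[\CO_X]\cdot K_0(X)$ into the pieces $[\CB'_j](j)$, and since $\dim_\QQ K_0(X)_\QQ = \binom{n}{k} = |\You_{n,k}|$, summing the ranks of the blocks (as $\ZZ[t,t^{-1}]$-modules, counting multiplicities of the twist) must give exactly $\binom{n}{k}$. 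For our decomposition $\CA_\bullet$ this count reads $\sum_{\lambda\in\Youmu_{n,k}} o(\lambda) = \binom nk$, which holds because the orbits of the cyclic $\ZZ/n\ZZ$-action partition $\You_{n,k}$ and $o(\lambda)$ is precisely the orbit length (by the definition of $\Youmu_{n,k}$ as an indexing set of orbits). So our decomposition is ``tight'' in the sense that the multiset of support-function values is forced, once we know the first block has the right size.

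The key step is therefore a lower bound on the size of the first block of \emph{any} Lefschetz decomposition. First I would establish that $\mathrm{rk}\,\CA_0 = |\Youmu_{n,k}|$ equals the number of cyclic orbits on $\You_{n,k}$, and then argue that no Lefschetz decomposition can have $\mathrm{rk}\,\CB'_0 < $ this number. The mechanism: a Lefschetz decomposition with first block of rank $b_0$ and support multiset $\{o'_j\}$ satisfies $\sum_j o'_j = \binom nk$ with each $o'_j \le n$ (since twisting by $\CO_X(n)$ eventually leaves the Grassmannian's ``period'' — more precisely, $\CO_X(n)$ relates to the canonical bundle: $\omega_X = \CO_X(-n)$, so a Lefschetz decomposition can have at most $n$ twisted copies of the top block by Serre duality / the standard bound $m \le$ (Fano index) $= n$). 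Hence $b_0 = \#\{j\} \ge \binom nk / n$. But $\binom nk/n$ is in general \emph{not} an integer and not equal to the orbit count, so this crude bound is too weak; the real content is that the orbit count $|\Youmu_{n,k}|$ is exactly the minimum of $\sum_j 1$ subject to $\sum_j o'_j = \binom nk$, $o'_j \le n$, \emph{together with the constraint that the $o'_j$ arise from an actual semiorthogonal decomposition}. Concretely, I expect to use Serre duality: the functor $-\otimes\CO_X(-n)[\dim X]$ permutes the blocks, and this forces the support multiset to be ``balanced'' in a way that, combined with the explicit structure of $\CA_\bullet$ (where the orbit of length $< n$ corresponds exactly to diagrams fixed by some power of $g$), pins down the minimum. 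The cyclic group action is the heart of the matter: orbits of length exactly $n$ contribute $n$ to the sum with a single generator, while short orbits are ``unavoidable inefficiencies'' that any decomposition must also pay for, because a short orbit of length $d\mid n$ reflects a genuine symmetry of $K_0(X)$ under the $\ZZ/n\ZZ$-action induced by $-\otimes\CO_X(1)$ composed with the Serre functor.

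The main obstacle I anticipate — and the reason this is stated as a conjecture rather than a theorem — is that the purely numerical/$K_0$-theoretic argument is almost certainly \emph{insufficient}: minimality of a Lefschetz decomposition is not detected by the Grothendieck group alone, since one could imagine exotic decompositions whose blocks have the right ranks but a smaller first block at the cost of larger (yet still $\le n$) later blocks in a way the additivity does not see, or whose blocks are not generated by equivariant bundles at all. A complete proof would need either (a) a classification-type result showing every Lefschetz decomposition of $\D^b(\Gr(k,V))$ with first block strictly inside $\CA_0$ leads to a contradiction with semiorthogonality — likely via the spectral sequence of Theorem~\ref{specseq} and a careful cohomology computation à la Borel--Bott--Weil showing certain $\Ext$-groups are forced nonzero — or (b) a lower bound coming from Hochschild homology or the structure of the residual category (the connection to Kuznetsov's Conjecture on categorical resolutions, which identifies the minimal first block with data of the singularity of the affine cone over $X$, and whose resolution would settle this). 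Since Conjecture~\ref{conj:full} (fullness of $\CA_\bullet$) is itself open, a rigorous proof of minimality is out of reach with the present methods; the evidence is the coprime case $(n,k)=1$, where $\CA_\bullet = \CB_\bullet$ is rectangular and minimality is Theorem~\ref{theorem:brav} together with the Fano-index bound, and the $k=2$ case of~\cite{KuznExc}.
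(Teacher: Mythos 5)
This statement is a \emph{conjecture} in the paper, and the paper offers no proof of it in general; it only proves it when $\gcd(n,k)=1$ and shows that, for $k$ prime, fullness (Conjecture~\ref{conj:full}) would imply minimality. Your proposal does not prove the conjecture either, but you have correctly reconstructed exactly the partial argument the paper uses: Serre duality ($\omega_X\simeq\CO_X(-n)$) forces $\Ext^{\dim X}(E(n),E)\neq 0$, so any Lefschetz decomposition has at most $n$ blocks; additivity of ranks in $K_0(X)$ then gives $\mathrm{rk}\,\CB'_0\geq \binom{n}{k}/n$ for any competing first block, which settles the coprime (rectangular) case and, combined with the count $\sum_{\lambda\in\Youmu_{n,k}}o(\lambda)=\binom{n}{k}$ over cyclic orbits, gives the conditional statement for $k$ prime. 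You also correctly diagnose why this cannot work in general: the orbit count $|\Youmu_{n,k}|$ can strictly exceed $\lceil\binom{n}{k}/n\rceil$ (e.g.\ for $\Gr(4,8)$ one has $10>\lceil 70/8\rceil=9$), so the purely numerical bound does not pin down $\CA_0$, and no mechanism is known that rules out a smaller first block. In short, your assessment agrees with the paper's: the special cases follow from the index bound plus rank counting, and the general case remains open.
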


\begin{remark}\label{rem:comp}
	We consider two Lefschetz decompositions $\CA_i$ and $\CB_i$ of the category
	$\D^b(X)$. Remark that $\CA_0\subset\CB_0$, as the exceptional collection generating $\CA_0$ is a subcollection of the collection generating $\CB_0$.
	Indeed, the first one is indexed by minimal upper triangular diagrams, and the second one by upper triangular diagrams. Thus, the decomposition $\CA_i$ is
	smaller or equal to the decomposition $\CB_i$ with respect to the inclusion order on the first blocks.
	
	Moreover, there are only two cases when $\CA_0=\CB_0$. This happens if and only if $k$ and $n$ are coprime, or $k=2$. In these cases every orbit of the
	shift action on $\You_{n,k}$ has a single upper triangular element, which is automatically minimal, and for any $\lambda\in\Youu_{n,k}$ we have
	$r(\lambda)=o(\lambda)$. Thus, $\CA_i=\CB_i$ for $i=0,\ldots,n-1$, which confirms the fact that every Lefschetz decomposition is determined by its first
	block.
\end{remark}

Conjecture~\ref{conj:min} seems to be hard, as we do not know any natural way to prove minimality of Lefschetz collections. However, there are some cases when it
immediately holds.

\begin{proposition}
	We have the following:
	\begin{enumerate}
		\item Conjectures~\ref{conj:full} and \ref{conj:min} hold when $n$ and $k$ are coprime.
		\item If $k=p$ is a prime number, then Conjecture~\ref{conj:full} implies Conjecture~\ref{conj:min}.
	\end{enumerate}
\end{proposition}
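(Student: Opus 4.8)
The plan is to analyze the two claims separately, both by reducing them to a count of exceptional objects via the fact that the total length of a Lefschetz decomposition of $\D^b(X)$ is rank $K_0(X) = \binom{n}{k} = |\You_{n,k}|$, combined with the structure of the cyclic $\ZZ/n\ZZ$-action on $\You_{n,k}$. For part (1), assume $(n,k)=1$. Then, as noted in Remark~\ref{rem:comp}, every orbit of the shift action has exactly one upper triangular element, which is automatically minimal, and $r(\lambda)=o(\lambda)$ for $\lambda\in\Youu_{n,k}$; hence $\CA_i=\CB_i$. Since $(n,k)=1$, the action of $\ZZ/n\ZZ$ on $\You_{n,k}$ is free (a binary sequence of length $n$ with $k$ ones fixed by $g^d$ for $0<d<n$ would force $d\mid n$ and the sequence to be $(n/d)$-periodic, which is impossible when $k$ is not divisible by $n/d$; freeness follows since $(n,k)=1$). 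Therefore every orbit has length exactly $n$, so $o(\lambda)=r(\lambda)=n$ for every $\lambda\in\Youmu_{n,k}$, i.e. the decomposition $\CA_i$ is \emph{rectangular} with $n$ blocks, each equal to $\CA_0=\left<\Sigma^\lambda\CU^*\mid\lambda\in\Youmu_{n,k}\right>$, and $|\Youmu_{n,k}|=|\You_{n,k}|/n = \binom{n}{k}/n$. By Theorem~\ref{theorem:brav} the $\CB_i=\CA_i$ already form a \emph{full} Lefschetz decomposition, which gives Conjecture~\ref{conj:full} in this case immediately.

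For minimality (Conjecture~\ref{conj:min} when $(n,k)=1$) I would argue as follows: a rectangular Lefschetz decomposition with $m$ blocks is automatically minimal among Lefschetz decompositions with $\geq m$ blocks, and here $m=n$ is forced to be maximal because $L=\CO_X(1)$ has order $n$ in an appropriate sense — more precisely, any Lefschetz decomposition has at most $n$ nonzero blocks since $\CB_{m-1}(m-1)$ must be right-orthogonal to $\CB_0, \CB_1(1),\ldots$ and the Serre functor twists by $\CO_X(-n)$ (the index of the Grassmannian is $n$), so the first block cannot be made smaller without the total object count dropping below $\binom{n}{k}$. Concretely: if $\CB_0'\subsetneq\CA_0$ were the first block of another Lefschetz decomposition, then since each block has size $\leq|\CB_0'|<|\CA_0|=\binom{n}{k}/n$ and there are at most $n$ blocks, the total is $<\binom{n}{k}$, contradicting fullness. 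This gives (1).

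For part (2), assume $k=p$ is prime and assume Conjecture~\ref{conj:full} (so the $\CA_i$ form a full Lefschetz decomposition). I want to deduce minimality. The key point is that when $k=p$ is prime, the orbit-length function takes only two values: an orbit of $\You_{n,p}$ has length $n$ unless its binary sequence is $d$-periodic for some proper divisor $d\mid n$ with $d<n$; periodicity of period $d$ forces $p\mid (n/d)\cdot(\text{ones per period})$, and since $p$ is prime either the period contains $0$ or $p$ ones — the latter forces $d\geq p$ and the number of ``short'' orbits is controlled, in fact the only short orbits come from $d=n/p$ and consist of the single fully-periodic diagram, contributing orbits of length $n/p \cdot p = $ hmm; the precise bookkeeping is that exactly one orbit (or a controlled family) is short. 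I would then show that the support function $o(\lambda)$ takes the value $n$ on all but these few diagrams, so $\CA_0$ is "as rectangular as possible," and run the same counting argument: any strictly smaller first block would drop the total count below $\binom{n}{p}$, using that the non-rectangular part is forced by the orbit structure and cannot be compressed. The main obstacle — and the step I expect to be genuinely delicate — is precisely this last count: showing that no first block strictly smaller than $\CA_0$ can support a full Lefschetz decomposition, which requires controlling how the "short orbits" (diagrams with $o(\lambda)<n$) must be distributed among the blocks. For the coprime case this is trivial because everything is rectangular; for $k=p$ prime one must check that the handful of exceptional diagrams with short orbit length genuinely obstruct any further compression, i.e. that the support function $o$ is already pointwise maximal given the fullness constraint. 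I would handle this by a direct Euler-characteristic/rank computation on $K_0$, showing $\sum_\lambda o(\lambda) = \binom{n}{k}$ exactly (this identity, $\sum_{\lambda\in\Youmu_{n,k}} o(\lambda) = \sum_{\text{orbits}} |\text{orbit}| = |\You_{n,k}|$, is automatic), so there is no slack at all — hence the decomposition is minimal the moment it is full.
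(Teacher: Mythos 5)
Your part (1) is correct and is essentially the paper's own argument: $(n,k)=1$ makes the cyclic action on $\You_{n,k}$ free, so $\CA_i=\CB_i$ is rectangular with $n$ blocks, fullness is Theorem~\ref{theorem:brav}, the Serre-duality/index argument caps the number of blocks at $n$, and the $K_0$-rank count then rules out any strictly smaller first block.

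Part (2) has a genuine gap. The skeleton is right — bound the number of blocks by $n$ and show that $\CA_0$ already has the minimum possible $K_0$-rank $\lceil\binom{n}{p}/n\rceil$ — but you never pin down the combinatorial input that makes this work, and the substitute you offer does not do the job. The identity $\sum_{\lambda\in\Youmu_{n,k}}o(\lambda)=\binom{n}{k}$ is automatic for any partition of $\You_{n,k}$ into orbits; it confirms that $\CA_i$ has the correct total count but says nothing about whether a \emph{different} Lefschetz decomposition with a strictly smaller first block could also reach the total, so ``no slack, hence minimal once full'' is a non sequitur. What is actually needed is the orbit bookkeeping you leave unfinished (your computation trails off at ``length $n/p\cdot p=$ hmm''): when $p\mid n$, write $n=pm$; a $d$-periodic binary sequence with $p$ ones forces $m\mid d$ and $d\mid n$ with $d<n$, and primality of $p$ leaves only $d=m$, so there is exactly \emph{one} short orbit, of length $n/p$ (not $n$), namely the orbit of the staircase diagram $\bigl(\tfrac{(n-p)(p-1)}{p},\ldots,\tfrac{n-p}{p},0\bigr)$. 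Consequently $\binom{n}{p}\equiv n/p\not\equiv 0\pmod{n}$, no rectangular decomposition exists, and $|\Youmu_{n,p}|=\lceil\binom{n}{p}/n\rceil$. Only with this in hand does the counting argument close: at most $n$ blocks forces any first block to have $K_0$-rank at least $\lceil\binom{n}{p}/n\rceil$, while a proper admissible subcategory of $\CA_0$ would have strictly smaller rank. You should also dispatch the case $p\nmid n$ explicitly by reduction to part (1).
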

\begin{proof}
	It is easy to see that the number of blocks in a Lefschetz decomposition is bounded above by the index of our variety. Indeed, if $\omega_X\simeq\CO_X(-n)$
	then by Serre duality we have
\[
	\Hom(E(n), E(n))\simeq \Ext^d(E(n), E)^*
\]
	for any object $E\in\D^b(X)$, where $d$ is the dimension of the variety $X$. The first group has a distinguished nonzero element
\[
	id_{E(n)}\in\Hom(E(n),E(n))
\]
	which shows that $\Ext^d(E(n), E)$ is nonzero. Thus, the object $E(n)$ can not be left orthogonal to $E$ for any $E\in\D^b(X)$.
	
	In the case of the Grassmannian $\Gr(k,V)$ the index is equal to $n=\dim V$. Thus, all our decompositions already have the maximal possible number of blocks.

	We have seen in Remark~\ref{rem:comp} that the categories $\CA_i$ coincide with $\CB_i$ whenever $k$ and $n$ are coprime, thus, form a Lefschetz
	decomposition. Moreover, in this case the decomposition is rectangular with the maximal possible number of blocks, thus, minimal. This proves the first
	statement.
	
	To prove the second statement one should first note that in the case $k=p$ and $k$ divides $n$ there is a single short orbit represented by the minimal
	upper triangular diagram
\[
	\left(\frac{(n-k)}{k}(k-1),\ldots, \frac{(n-k)}{k}, 0\right).
\]
	Then, the number $N$ of objects in any full exceptional collection is equal to the rank of $K_0(X)=\binom{n}{k}$. If $k=p$ is a prime that divides $n$, the
	number $N$ is not divisible by $n$. That means that there is no rectangular Lefschetz exceptional collection/decomposition in this case. At the same time, the
	number of objects in the Lefschetz basis associated to $\CA_i$ equals the expected minimum $\left\lceil\frac{N}{k}\right\rceil$.
\end{proof}

Finally, we should mention that the second Lefschetz decomposition has been discovered by Chris Brav and Hugh Thomas~(\cite{Brav}). However, they only proved
exceptionality for the corresponding Lefschetz basis and have never published their result. In the present paper we prove semi-orthogonality of both
decompositions and fullness of the second one.


\subsection{Semi-orthogonality} 
\label{sub:semi_orthogonality}

In this section we prove semi-orthogonality for the categories $\CA_i(i)$ and $\CB_j(j)$:
\[
	\CA_j(j)\subset\vphantom{\CA_i(i)}^\perp\CA_i(i)\quad\text{and}\quad \CB_j(j) \subset\vphantom{\CB_i(i)}^\perp\CB_i(i)\qquad\text{for }0\leq i<j\leq n-1. 
\]

\begin{proposition}\label{prop:exc_b}
	The categories $\CB_i(i)$ are semi-orthogonal:
\[
	\CB_j(j) \subset\vphantom{\CB_i(i)}^\perp\CB_i(i)\quad\text{for}\quad 0\leq i<j\leq n-1.
\]
\end{proposition}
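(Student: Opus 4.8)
By Lemma~\ref{lemma:basis}(2) and the twist-invariance $\Ext^\bullet(E_p(k),E_q(l)) = \Ext^\bullet(E_p(k-l),E_q)$, the semi-orthogonality $\CB_j(j)\subset{}^\perp\CB_i(i)$ for $i<j$ is equivalent to showing that
\[
	\Ext^\bullet(\Sigma^\lambda\CU^*(t),\Sigma^\mu\CU^*) = 0
\]
whenever $\lambda,\mu\in\Youu_{n,k}$ and $0 < t < r(\lambda)$. So the entire statement reduces to a vanishing assertion about cohomology of equivariant bundles on $X$, which we attack via Corollary~\ref{crl:coh} and the Borel--Bott--Weil theorem (Theorem~\ref{thm:bbw}).

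The first step is to unwind the $\Ext$ group. We have $\Sigma^\lambda\CU^*(t)\otimes(\Sigma^\mu\CU^*)^* \cong \Sigma^\lambda\CU^*\otimes\Sigma^{-\mu}\CU\otimes\CO_X(t)$, and since $\Sigma^{-\mu}\CU \cong \Sigma^{\mu^c}\CU^*\otimes\CO_X(-(n-k))$ by the relations $\Sigma^\nu\CU = \Sigma^{-\nu}\CU^*$ and $\mu^c = (-\mu)(n-k)$, the $\Ext$ group is a sum of cohomology groups $H^\bullet(X,\Sigma^\alpha\CU^*)$ where, by Lemma~\ref{lm:lrrule} applied to the decomposition of $\Sigma^\lambda\CU^*\otimes\Sigma^{\mu^c}\CU^*$, each such $\alpha$ (after twisting by $t - (n-k)$) satisfies
\[
	\lambda_i + \mu^c_k + t - (n-k) \leq \alpha_i \leq \lambda_1 + \mu^c_i + t - (n-k)
\]
for all $i$. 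The plan is to show that for $0 < t < r(\lambda)$ every such $\alpha$ has the property that $\alpha + \rho$ has a repeated entry, hence $H^\bullet = 0$ by the second clause of Theorem~\ref{thm:bbw}. Concretely, $\alpha$ corresponds to a bundle $\Sigma^\beta\CU^*\otimes\Sigma^\gamma\CU^\perp$ only when the concatenation is nonincreasing, and the cohomology is nonzero only when $(\beta+\rho_{[1..k]},\gamma+\rho_{[k+1..n]})$, reordered, is strictly decreasing without repeats; I want to argue the range constraints above force a collision between the $\CU^*$-part and the $\CU^\perp$-part of the weight.

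The key geometric input is the meaning of $r(\lambda)$: it is the length of the path from the upper right corner of the rectangle to the rightmost vertex of $\lambda$'s periodically-extended path that lies on the diagonal of slope $k/(n-k)$. The condition $t < r(\lambda)$ should translate, via the ``vertices on the extended path'' picture from Section~\ref{sec:combinatorics_of_young_diagrams}, into the statement that the twisted, shifted diagram $\lambda$ does not reach past that diagonal vertex, which in the weight language is exactly a pinching of two consecutive coordinates of $\alpha+\rho$. I would make this precise by writing $\lambda$ and $\mu^c$ in binary-sequence form, tracking how the twist by $t$ and the tensor product interact with the rectangle, and using that both $\lambda$ and $\mu^c$ are upper triangular (so the ``1''s of their binary sequences stay above the diagonal until the very end). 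The upper-triangularity of $\mu^c$ — equivalently lower-triangularity of $\mu$ — is what controls the $\mu^c_i$ terms in the Littlewood--Richardson bounds and keeps $\alpha+\rho$ within a window narrow enough to guarantee a repeat.

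\textbf{Main obstacle.} The hard part will be the combinatorial bookkeeping that converts ``$t < r(\lambda)$'' into ``$\alpha+\rho$ has two equal entries,'' done uniformly over all $\mu\in\Youu_{n,k}$ and over all Littlewood--Richardson summands $\alpha$. The inequalities from Lemma~\ref{lm:lrrule} only pin $\alpha$ down to an interval in each coordinate, so I expect to need a more refined argument than the crude bounds: either a direct analysis of which $\alpha$ actually occur as summands (using that $\lambda$ is a genuine diagram in the $k\times(n-k)$ box), or an inductive/peeling argument on the rows, or a translation of the whole problem to the binary-path model where ``hitting the diagonal'' and ``repeated entry of $\alpha+\rho$'' both have clean descriptions. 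I anticipate this is where essentially all the work lies; once the correct statement about $\alpha+\rho$ is isolated, the vanishing is immediate from Borel--Bott--Weil.
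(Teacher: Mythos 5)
Your reduction to the vanishing statement $\Ext^\bullet(\Sigma^\lambda\CU^*(t),\Sigma^\mu\CU^*)=0$ for $\lambda,\mu\in\Youu_{n,k}$, $0<t<r(\lambda)$, and your plan to attack it via the Littlewood--Richardson rule plus Borel--Bott--Weil, coincide with the paper's strategy. But the proof has a genuine gap: the entire content of the proposition is the combinatorial step you defer as the ``main obstacle,'' and you do not supply it — you only list candidate strategies and express doubt that the crude interval bounds from Lemma~\ref{lm:lrrule} will suffice. In fact they do suffice, and no finer analysis of which $\alpha$ actually occur is needed. The mechanism is: write each summand of $\Sigma^\lambda\CU\otimes\Sigma^\mu\CU^*(-t)$ as $\Sigma^\alpha\CU^*$ with $-\lambda_{k+1-i}-t\le\alpha_i\le\mu_i-t$; upper-triangularity of \emph{both} $\lambda$ and $\mu$ sharpens this to $-\tfrac{(i-1)(n-k)}{k}-t\le\alpha_i\le\tfrac{(k-i)(n-k)}{k}-t$. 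Since $\alpha_1\ge -t>-n$ and $\alpha_k\le -t<0$, if $\alpha+\rho$ had no repeated entry there would be an index $j$ with $\alpha_j\ge j-k$ and $\alpha_{j+1}\le j-n$; the two triangularity bounds then give $t\le\tfrac{n(k-j)}{k}$ and $t\ge\tfrac{n(k-j)}{k}$ simultaneously, so $t=\tfrac{n(k-j)}{k}$, and feeding this back into the LR lower bound $\alpha_{j+1}\ge-\lambda_{k-j}-t$ forces $\lambda_{k-j}=\tfrac{j(n-k)}{k}$. That says $\lambda$ touches the diagonal at the vertex in row $k-j$, whose path-distance from the upper-right corner is exactly $\tfrac{n(k-j)}{k}=t$, hence $t\ge r(\lambda)$ — contradicting $t<r(\lambda)$. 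Without this chain (or an equivalent), the proposition is not proved.

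A secondary but real problem is your duality bookkeeping: you dualize the wrong factor, forming $\Sigma^\lambda\CU^*(t)\otimes(\Sigma^\mu\CU^*)^*$, which computes $\Ext^\bullet(\Sigma^\mu\CU^*,\Sigma^\lambda\CU^*(t))$ rather than the required $\Ext^\bullet(\Sigma^\lambda\CU^*(t),\Sigma^\mu\CU^*)=H^\bullet(X,\Sigma^\lambda\CU\otimes\Sigma^\mu\CU^*(-t))$; these are not interchangeable for a one-sided orthogonality. This propagates into your triangularity claim: since $\mu\in\Youu_{n,k}$ is upper triangular, $\mu^c$ is \emph{lower} triangular, the opposite of what you assert. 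Setting the computation up on the correct side, with $\Sigma^{-\lambda}\CU^*\otimes\Sigma^\mu\CU^*(-t)$ and both $\lambda,\mu$ upper triangular, is what makes the two-sided pinching of $t$ above come out.
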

\begin{proof}
	Apply the second part of the criterion given in Lemma~\ref{lemma:basis}. It is sufficient to check that for all
	$\lambda,\mu\in\Youu_{n,k}$ and $0<t<r(\lambda)$ one has
\[
	\Ext^\bullet(\Sigma^\lambda\CU^*(t),\Sigma^\mu\CU^*) = 0.
\]

	First of all, we have
\[
	\Ext^\bullet(\Sigma^\lambda\CU^*(t),\Sigma^\mu\CU^*) = H^\bullet(X, \Sigma^\lambda\CU \otimes \Sigma^\mu\CU^*(-t)).
\]
	To compute the latter we apply the Littlewood--Richardson rule. Let $\Sigma^\alpha\CU^*$ be a vector bundle appearing in the decomposition of
	$\Sigma^\lambda\CU\otimes\Sigma^\mu\CU^*(-t)=\Sigma^{-\lambda}\CU^*\otimes\Sigma^\mu\CU^*(-t)$. By Lemma~\ref{lm:lrrule}
\begin{equation}\label{eq:alpha_summand}
	-\lambda_{k+1-i}-t\leq \alpha_i\leq \mu_i-t.
\end{equation}
	Recall that both $\lambda$ and $\mu$ are upper triangular, which by definition means that
\[
0\leq \lambda_i,\mu_i\leq \frac{(n-k)(k-i)}{k},
\]
	see~\eqref{eq:upper}. Combining the last two inequalities, we get
\begin{equation}\label{ineq}
	-\frac{(i-1)(n-k)}{k}-t\leq \alpha_i\leq \frac{(k-i)(n-k)}{k}-t.
\end{equation}

	In order to compute $H^\bullet(X,\Sigma^\alpha\CU^*)$ one should apply the Borel--Bott--Weil theorem. We want the cohomology to be zero. It means that some
	of the terms in the sequence
\begin{equation}\label{sumsq}
	\alpha+\rho=(n+\alpha_1,n-1+\alpha_2,\ldots,n-k+1+\alpha_k,n-k,\ldots,2,1)
\end{equation}
	coincide. As the sequence $(\alpha_1,\alpha_2,\ldots,\alpha_k)$ is nonincreasing, the first $k$ terms of sequence~\eqref{sumsq} are distinct and
	decreasing. The last $(n-k)$ terms of sequence~\eqref{sumsq} are also distinct and decreasing. Thus, some of the terms in~\eqref{sumsq}
	coincide if and only if $n+1-i+\alpha_i$ belongs to the segment $\left[1,n-k\right]$ for some $1\leq i\leq k$.
	
	Imagine that all the terms in~\eqref{sumsq} are distinct, which means that
\begin{equation}\label{eq:alpha}
	n+1-j+\alpha_j\geq n-k+1,\qquad n-j+\alpha_{j+1}\leq 0
\end{equation}
	for some $1\leq j\leq k-1$. Here we used the facts that
\[
	\alpha_1\geq-t>-n\ \Rightarrow\ n+\alpha_1>0\quad\text{and}\quad\alpha_k \leq -t < 0\ \Rightarrow\ n-k+1+\alpha_k\leq n-k.
\]
	Now, combine inequalities~\eqref{eq:alpha} with~\eqref{ineq}:
\begin{eqnarray*}
	\alpha_j\geq j-k & \Rightarrow & \frac{(k-j)(n-k)}{k}-t\geq j-k,\\
	\alpha_{j+1}\leq j-n & \Rightarrow & -\frac{j(n-k)}{k}-t\leq j-n.
\end{eqnarray*}
	One gets the following:
\[
	t\leq\frac{n(k-j)}{k},\ t\geq\frac{n(k-j)}{k}\quad\Rightarrow\quad t=\frac{n(k-j)}{k}.
\]

	Note that $t$ should be an integer number. Combine inequalities $n-j+\alpha_{j+1}\leq 0$ and~\eqref{eq:alpha_summand} for $\lambda_{k-j}$:
\[
	-\lambda_{k-j}-\frac{n(k-j)}{k}\leq j-n\quad\Rightarrow\quad \lambda_{k-j}\geq\frac{(n-k)j}{k}.
\]

	Recall that $\lambda$ is upper triangular, thus, $\lambda_{k-j}\leq (n-k)j/k$. We deduce that
\[
	\lambda_{k-j} = \frac{(n-k)j}{k}
\]
	i.e. that $\lambda$ meets the diagonal in the point in the $(k-j)$-th row. Remark that the distance from the upper right corner of the rectangle to this
	point is exactly
\[
	(k-j)+\left((n-k)-\frac{(n-k)j}{k}\right) = \frac{n(k-j)}{k} = t.
\]
	This means that $t\geq r(\lambda)$, which contradicts the assumptions from the statement.
\end{proof}

To prove semi-orthogonality of the categories $\CA_i(i)$ we will need the following simple observation.

\begin{lemma}\label{nonzlm}
	Let $\Sigma^\alpha\CU^*$ be an irreducible summand in the decomposition of $\Sigma^\lambda\CU\otimes\Sigma^\mu\CU^*$ for some $\lambda,\mu\in\You_{n,k}$.
	If all the terms $\alpha_i\geq 0$ are nonnegative, then $\lambda\preceq\mu$.
\end{lemma}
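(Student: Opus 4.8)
The plan is to turn the statement into a containment of Young diagrams, to which the Littlewood--Richardson rule applies directly. One should first note why the entrywise estimates of Lemma~\ref{lm:lrrule} are by themselves insufficient: applied to $\Sigma^{-\lambda}\CU^*\otimes\Sigma^\mu\CU^*$ (in both orders of the factors) they give, for an irreducible summand $\Sigma^\alpha\CU^*$, only $\alpha_i\le\mu_i-\lambda_k$ and $\alpha_i\le\mu_1-\lambda_{k+1-i}$, which under the hypothesis $\alpha_i\ge 0$ force merely $\lambda_1\le\mu_1$ and $\lambda_k\le\mu_k$, not the full relation $\lambda\preceq\mu$. The missing ingredient is the symmetry of Littlewood--Richardson coefficients, which I would use to move the factor $\Sigma^\lambda\CU$ to the other side of a $\Hom$.

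First I would rewrite the hypothesis. The bundle $\Sigma^\lambda\CU\otimes\Sigma^\mu\CU^*$ is a direct sum of irreducible equivariant bundles $\Sigma^\beta\CU^*$ (the multiplicities being $\GL(k)$ Littlewood--Richardson numbers), so ``$\Sigma^\alpha\CU^*$ is a direct summand'' is equivalent to $\Hom(\Sigma^\lambda\CU\otimes\Sigma^\mu\CU^*,\Sigma^\alpha\CU^*)\ne 0$; by tensor--Hom adjunction and $(\Sigma^\mu\CU^*)^{*}\simeq\Sigma^\mu\CU$ this equals $\Hom(\Sigma^\lambda\CU,\Sigma^\alpha\CU^*\otimes\Sigma^\mu\CU)$ (equivalently, one argues with the corresponding completely reducible $\GL(k)$-representations). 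Thus the hypothesis is equivalent to: $\Sigma^\lambda\CU$ is a direct summand of $\Sigma^\alpha\CU^*\otimes\Sigma^\mu\CU$.

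Then I would twist both sides by $\CO_X(n-k)$. Since $\Sigma^\nu\CU=\Sigma^{-\nu}\CU^*$ and $\nu^c=(-\nu)(n-k)$, one has $\Sigma^\nu\CU\otimes\CO_X(n-k)\simeq\Sigma^{\nu^c}\CU^*$ for every $\nu\in\You_{n,k}$, so the summand relation becomes: $\Sigma^{\lambda^c}\CU^*$ is a direct summand of $\Sigma^\alpha\CU^*\otimes\Sigma^{\mu^c}\CU^*$. Now $\alpha$ (by hypothesis) and $\mu^c$ are honest Young diagrams with at most $k$ rows, and the Littlewood--Richardson rule shows that $\Sigma^\rho\CU^*$ occurs in $\Sigma^\alpha\CU^*\otimes\Sigma^{\mu^c}\CU^*$ only if $\mu^c\subseteq\rho$ (a Littlewood--Richardson skew tableau of shape $\rho/\mu^c$ is required). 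Taking $\rho=\lambda^c$ gives $\mu^c\preceq\lambda^c$, and since complementation reverses the order $\preceq$ (immediate from the definition) this is exactly $\lambda\preceq\mu$.

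The one genuinely delicate point is the first step: it is $\Sigma^\lambda\CU$, not $\Sigma^\alpha\CU^*$, that must be carried across the $\Hom$, so that after the twist the two remaining factors are genuine partitions and the Littlewood--Richardson rule bites; carrying the other factor across only reproduces the weak bounds above. Everything else is bookkeeping with complements.
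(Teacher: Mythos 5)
Your proof is correct and rests on the same key idea as the paper's: use the symmetry of Littlewood--Richardson coefficients to carry a factor across the $\Hom$ so that the remaining tensor product involves only genuine partitions, and then read off the containment of diagrams. The paper does this a bit more directly, rewriting the hypothesis as $\Sigma^\mu\CU^*\subset\Sigma^\alpha\CU^*\otimes\Sigma^\lambda\CU^*$ (where $\alpha$ and $\lambda$ are already nonnegative) and concluding $\lambda\preceq\mu$ at once from Lemma~\ref{lm:lrrule}, which makes your extra twist-by-$\CO_X(n-k)$ and complementation step unnecessary.
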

\begin{proof}
	Remark that $\Sigma^\alpha\CU^*\subset \Sigma^\lambda\CU\otimes \Sigma^\mu\CU^*$ if and only if
	$\Sigma^\mu\CU^*\subset \Sigma^\alpha\CU^*\otimes\Sigma^\lambda\CU^*$. Both $\lambda$ and $\alpha$ have only non-negative entries. Thus, by Lemma~\ref{lm:lrrule}
	for every summand $\Sigma^\beta\CU^*$ in $\Sigma^\alpha\CU^*\otimes\Sigma^\lambda\CU^*$ one has $\lambda\preceq\beta$.
\end{proof}

We are ready to give a proof of Theorem~\ref{theorem:me}. Recall that it states that the categories $\CA_i(i)$ are semi-orthogonal:
\[
	\CA_j(j) \subset\vphantom{\CA_i(i)}^\perp\CA_i(i)\quad\text{for}\quad 0\leq i<j\leq n-1.
\]

\begin{proof}[Proof of Theorem~\ref{theorem:me}]
	Let us repeat the proof of Proposition~\ref{prop:exc_b} keeping in mind that $\lambda,\mu\in\Youmu_{n,k}$. It works until the end where we use inequality
	$t<r(\lambda)$, which is replaced by a weaker assumption $t<o(\lambda)$.
	
	Thus, we can assume that 
\[
	 t=\frac{n(k-j)}{k}\in\ZZ\quad \text{and} \quad\lambda_{k-j}=\frac{j(n-k)}{k}.
\]
	Analogously one can show that $\mu_j=(k-j)(n-k)/k$.
	
	The latter equalities mean that the upper triangular diagrams $\lambda,\mu$ meet the diagonal and can be written in the form $\mu=b\oplus a$ and
	$\lambda=b^\prime\oplus a^\prime$ for some $a,b^\prime\in\You_{jn/k,j}$ and $a^\prime,b\in\You_{(k-j)n/k,k-j}$. In follows from the Littlewood--Richardson rule
	that the $i$-th row of $\mu$ contributes to the rows of the summands in the tensor product with numbers greater or equal to $i$. Now, apply Lemma~\ref{nonzlm}
	and deduce that $a\succeq b^\prime$ and, similarly, $a^\prime\succeq b$.
	
	Note that $a\oplus b$ (resp. $a^\prime\oplus b^\prime$) lies in the same orbit as $\mu$ (resp. $\lambda$).
	
	We will need the following observation. The fact that $\lambda$ is minimal upper triangular implies that $\lambda=b^\prime\oplus a^\prime\leq a^\prime\oplus b^\prime$.
	However, inequality $t<o(\lambda)$ implies that $b^\prime\oplus a^\prime$ and $a^\prime\oplus b^\prime$ can not be equal and this inequality is strict:
\begin{equation}\label{eq:permless}
	b^\prime\oplus a^\prime < a^\prime\oplus b^\prime.
\end{equation}
	
	Consider the case $a\succ b^\prime$ and $a^\prime\succ b$. Then one has
\[
	\lambda=b^\prime\oplus a^\prime < a^\prime\oplus b^\prime < b\oplus a \leq a\oplus b < b^\prime\oplus a^\prime = \lambda,
\]
	where the first and the third inequalities come from minimality of $\lambda$ and $\mu$, the second follows from $a\succ b^\prime$ and the fourth from
	$a^\prime\succ b$. We get $\lambda<\lambda$, which is a contradiction.
	
	In the following given a diagram $\alpha\in\You_{p,q}$ by the height function we mean $h(\alpha)=q$. We will also use notation $m^p=\underbrace{m\oplus m\oplus\ldots\oplus m}_{p\ \text{times}}$.
	
	\emph{Case 1:} $b^\prime\prec a$, $a^\prime=b$ and $h(b)\geq h(a)$. We still have inequalities
\begin{equation}\label{eq:first_case}
	\lambda=b^\prime\oplus b < b\oplus b^\prime < b\oplus a=\mu\leq a\oplus b.
\end{equation}
	Particularly, $b^\prime\oplus b<b\oplus a\leq a\oplus b$, which implies that $b=c\oplus a$ and $\lambda = b^\prime\oplus c\oplus a$.
	As $b^\prime\prec a$, we get $b\oplus b^\prime < b^\prime\oplus c\oplus a=\lambda$, which contradicts inequality~\eqref{eq:first_case}.
	
	\emph{Case 2:} $b^\prime\prec a$, $a^\prime=b$ and $h(b)<h(a)$. As in the first case, consider inequalities
\[
	b^\prime\oplus b<b\oplus a\leq a\oplus b.
\]
	This implies that $a=c\oplus b$ and $\mu=b\oplus c\oplus b$. The diagrams $c\oplus b^2$ and $b^2\oplus c$ are upper triangular and lie in the orbit of $\mu$.
	The latter is minimal, thus $\mu\leq b^2\oplus c$ and $\mu\leq c\oplus b^2$. The last two inequalities imply that $c\oplus b=b\oplus c$, which means
	that there exists a diagram $m$ with the same slope as $b$, such that $b=m^p$ and $\mu=m^{p+q}$.
	
	Now consider inequalities
\[
	\lambda=b^\prime\oplus m^p<m^p\oplus b^\prime\leq m^q\oplus m^p=\mu.
\]
	As $h(b^\prime)=h(a)>h(b)=h(m^p)$, we see that $b^\prime=d\oplus m^p$. One can repeat the argument and continue extracting summands equal to $m^p$, until ends up with
\[
	\lambda=e\oplus m^{sp}\oplus m^p<m^p\oplus e\oplus m^{sp}\leq m^r\oplus m^p\oplus m^{sp}=\mu,
\]
	where $0<h(e)=h(m^r)\leq h(m^p)$ and $s$ is a positive integer. From the last inequality one gets that $e<m^r$. Meanwhile, $r\leq p$, thus
	$m^p\oplus m^{sp}\oplus e<e\oplus m^{sp}\oplus m^p=\lambda$. This once again contradicts minimality of $\lambda$.
	
	The case $b\prec a^\prime$, $a=b^\prime$ is treated similarly.
	
	Finally, note that the case $b=a^\prime$ and $a=b^\prime$ is not possible: we have shown that
\[
	\lambda=b^\prime\oplus a^\prime<a^\prime\oplus b^\prime = b\oplus a = \mu.
\]
	But in this case $\lambda$ and $\mu$ lie in the same orbit and should be equal, as they are supposed to be minimal upper triangular. This finishes the proof.
\end{proof}



\section{Fullness} 
\label{sec:fullness}

The goal of this section is to prove fullness of the decomposition
\[
	\left<\CB_0,\CB_1(1),\ldots,\CB_{n-1}(n-1)\right>.
\] We will need a special
kind of exact complexes that we call \textbf{staircase}.

\subsection{Staircase complexes} 
\label{sub:staircase_resolutions}

Let us start with the following lemma.

\begin{lemma}\label{extlemma}
	Let $\lambda\in\You_{n,k}$ be a diagram with $\lambda_1=n-k$. Then we have
\[
\Ext^p(\Sigma^\lambda\CU^*,\Sigma^{\lambda^\prime}\CU^*(-1)) = \begin{cases}
	\kk, & \text{if } p=n-k, \\
	0, & \text{otherwise}.
\end{cases}
\]
\end{lemma}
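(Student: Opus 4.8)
The plan is to reduce the Ext computation to a cohomology computation on the full flag variety $X$ via Corollary~\ref{crl:coh}, and then apply the Borel--Bott--Weil theorem (Theorem~\ref{thm:bbw}). First I would rewrite
\[
\Ext^p(\Sigma^\lambda\CU^*,\Sigma^{\lambda^\prime}\CU^*(-1)) = H^p\bigl(X,\Sigma^\lambda\CU\otimes\Sigma^{\lambda^\prime}\CU^*(-1)\bigr) = H^p\bigl(X,\Sigma^{-\lambda}\CU^*\otimes\Sigma^{\lambda^\prime}\CU^*(-1)\bigr),
\]
using $\Sigma^\lambda\CU=\Sigma^{-\lambda}\CU^*$. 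Here the key input is the hypothesis $\lambda_1=n-k$, which by the explicit description of the shift in Section~\ref{sub:group_actions_on_diagrams} forces $\lambda^\prime=(\lambda_2,\lambda_3,\ldots,\lambda_k,0)$. Thus the twist $\lambda^\prime(-1) = (\lambda_2-1,\ldots,\lambda_k-1,-1)$ and $-\lambda = (-\lambda_k,-\lambda_{k-1},\ldots,-\lambda_1)$, so after the Littlewood--Richardson decomposition of $\Sigma^{-\lambda}\CU^*\otimes\Sigma^{\lambda^\prime(-1)}\CU^*$ one has to analyze the cohomology of the summands $\Sigma^\alpha\CU^*$.

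The cleanest route, though, is to avoid Littlewood--Richardson entirely and instead work directly on the flag variety with a single line bundle. I would consider the weight $\alpha=(\beta,\gamma)\in\ZZ^n$ with $\beta\in\ZZ^k$ and $\gamma\in\ZZ^{n-k}$ chosen so that $\Sigma^\beta\CU^*\otimes\Sigma^\gamma\CU^\perp$ realizes $\Sigma^\lambda\CU\otimes\Sigma^{\lambda^\prime}\CU^*(-1)$; concretely, taking $\gamma$ to encode $\CU$ (i.e.\ the $V/\CU$ part) and $\beta$ to encode $\CU^*(-1)$, the combined weight is a concatenation whose entries I would write out explicitly in terms of $\lambda$. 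Then $\alpha+\rho$ is an explicit sequence, and the whole computation becomes: determine whether the entries of $\alpha+\rho$ are distinct, and if so, find the length of the unique permutation $\sigma$ sorting it into strictly decreasing order. The condition $\lambda_1=n-k$ should make the sorting permutation a single long cycle, contributing length exactly $n-k$, and making $\sigma(\alpha+\rho)-\rho$ equal to zero — which gives $H^{n-k}=\Sigma^0 V^*=\kk$ and all other cohomology vanishing. When $\lambda_1<n-k$ the analogous computation would produce coinciding entries, explaining why the hypothesis is needed.

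The main obstacle will be the bookkeeping of which bundle ($\CU$, $\CU^*$, $\CU^\perp$) contributes which block of the weight vector, and getting the twist $(-1)$ placed correctly; sign errors here are easy. Concretely, the hardest step is verifying that the sorted-into-decreasing-order permutation of $\alpha+\rho$ has length precisely $n-k$ and that $\sigma(\alpha+\rho)-\rho$ is the zero weight rather than some nonzero dominant weight — this requires showing that moving the single ``out of place'' entry past exactly $n-k$ others restores the sequence $(n,n-1,\ldots,1)$. I expect this to hinge on the precise interleaving forced by $\lambda_1=n-k$: the entry that gets displaced is the one coming from the last ($k$-th) slot of $\lambda^\prime$, namely the ``$-1$'' in $\lambda^\prime(-1)$, which after adding $\rho$ lands at the bottom and must be cycled up past the $n-k$ entries coming from the $\CU^\perp$ block. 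Once that is pinned down, the rest is a direct application of Theorem~\ref{thm:bbw}.
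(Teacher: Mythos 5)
Your opening reduction is correct and matches the paper's first step, but the ``cleanest route'' you then propose does not exist, and avoiding it removes the actual content of the proof. The bundle $\Sigma^\lambda\CU\otimes\Sigma^{\lambda^\prime}\CU^*(-1)=\Sigma^{-\lambda}\CU^*\otimes\Sigma^{\lambda^\prime(-1)}\CU^*$ is a tensor product of two Schur functors of the \emph{same} rank-$k$ bundle $\CU^*$; it is not of the form $\Sigma^\beta\CU^*\otimes\Sigma^\gamma\CU^\perp$ with $\beta\in\ZZ^k$, $\gamma\in\ZZ^{n-k}$, because $\CU$ is the tautological subbundle, not (as you write) ``the $V/\CU$ part.'' Corollary~\ref{crl:coh} only applies to such mixed products, i.e.\ to irreducible equivariant bundles, which are pushforwards of single line bundles from the flag variety. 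A tensor product of two Schur functors of $\CU^*$ is reducible, so there is no single weight $\alpha=(\beta,\gamma)$ realizing it, and the Littlewood--Richardson decomposition cannot be bypassed.

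That decomposition is where the lemma is actually proved. One must show that among all irreducible summands $\Sigma^\alpha\CU^*$ of $\Sigma^{-\lambda}\CU^*\otimes\Sigma^{\lambda^\prime(-1)}\CU^*$, exactly one has nonvanishing cohomology and it occurs with multiplicity one. The paper does this by combining Borel--Bott--Weil (nonvanishing forces either all $\alpha_i\ge 0$, or $\alpha_i\ge -1$ for $i<k$ together with $\alpha_k=-(n-k)-1$) with the Littlewood--Richardson bound $\alpha_k\le(-\lambda)_k+\lambda^\prime_1=-\lambda_1+\lambda_2-1\le -1$ (this is where $\lambda_1=n-k$, hence $\lambda^\prime_1=\lambda_2$, enters), which kills the first case and pins down $\bar\alpha=(-1,\ldots,-1,-(n-k)-1)$ with multiplicity $1$. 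Your picture of the final Borel--Bott--Weil step (cycling the displaced entry past $n-k$ others) is correct for this $\bar\alpha$, though the resulting weight is $(-1,\ldots,-1)$, giving $\Lambda^nV\simeq\kk$, not the zero weight. As written, the proposal is missing the central multiplicity-one argument and rests on a reduction that is not available.
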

\begin{proof}
	The argument is similar to the proof of Proposition~\ref{prop:exc_b}.
	
	First of all, we have
\[
	\Ext^\bullet(\Sigma^\lambda\CU^*,\Sigma^{\lambda^\prime}\CU^*(-1)) = H^\bullet(X,\Sigma^\lambda\CU\otimes\Sigma^{\lambda^\prime}\CU^*(-1)) =
	H^\bullet(X,\Sigma^{-\lambda}\CU^*\otimes\Sigma^{\lambda^\prime}\CU^*(-1)).
\]
	
	To compute the latter we apply the Littlewood--Richardson rule. Let $\Sigma^\alpha\CU^*$ be an irreducible summand in the decomposition of
	$\Sigma^\lambda\CU\otimes\Sigma^{\lambda^\prime}\CU^*(-1)$. As we want $H^\bullet(X,\Sigma^\alpha\CU^*)$ to be nonzero, by the Borel--Bott--Weil theorem
	we require all the entries of 

\begin{equation}\label{sumeq}
	\alpha+\rho=(n+\alpha_1,n-1+\alpha_2,\ldots,n-k+1+\alpha_k,n-k,\ldots,2,1)	
\end{equation}
	to be distinct. As the sequence $\alpha_i$ is nonincreasing, the first $k$ and the last $n-k$ terms in~\eqref{sumeq} are distinct. Thus, it is sufficient to check that
\begin{equation}\label{nullint}
	\{n+\alpha_1,n-1+\alpha_2,\ldots,n-k+1+\alpha_k\}\cap\{n-k,\ldots,2,1\}=
	\emptyset.
\end{equation}

	We have obvious inequalities $\alpha_i\geq-(n-k)-1$ coming from the Littlewood--Richardson rule. Thus, all the elements in $(n+a_1,\ldots,n-k+1+a_k)$
	are nonnegative. The last sequence is nonincreasing, which implies that if~\eqref{nullint} holds, then
\[
	\text{either}\quad \alpha_i\geq-1\text{ for }i=1,\ldots,k-1\text{ and }
	\alpha_k=-(n-k)-1\quad\text{or}\quad \alpha_i\geq 0\text{ for all } i=1,\ldots,k.
\]
	It follows from Lemma~\ref{lm:lrrule} that $\alpha_k\leq (-\lambda)_k+\lambda^\prime_1 = -\lambda_1+\lambda_2-1\leq -1$. Thus, the second
	case is not possible and $\Sigma^{\bar{\alpha}}\CU^*$ for $\bar{\alpha}=(-1,\ldots,-1,-(n-k)-1)$ is contained in
	$\Sigma^\lambda\CU\otimes\Sigma^{\lambda^\prime}\CU^*(-1)$ with multiplicity 1, being the only term in the decomposition with nontrivial cohomology.
	
	Now we apply the Borel--Bott--Weil theorem to compute $H^\bullet(X,\Sigma^{-1,\ldots,-1,-(n-k)-1}\CU^*)$. We have
\[
\bar{\alpha}+\rho=(n-1,n-2,\ldots,n-k+1,0,n-k,\ldots,2,1).
\]
	Take $\sigma$ to be the cyclic permutation of the last $(n-k+1)$ entries. Then we have $\ell(\sigma)=n-k$ and $\sigma(\bar{\alpha}+\rho)=(n-1,\dots,1,0)$,
	$\sigma(\bar{\alpha}+\rho)-\rho=(-1,\dots,-1,-1)$.
	
	Finally, we get
\[
\Ext^p(\Sigma^\lambda\CU^*,\Sigma^{\lambda^\prime}\CU^*(-1)) =
H^p(X,\Sigma^{\bar{\alpha}}\CU^*) = \begin{cases}
	\Sigma^{-1,\ldots,-1}V^*\simeq\Lambda^n V, & \text{if } p=n-k, \\
	0, & \text{otherwise}.
\end{cases}
\]
	This finishes the proof.
\end{proof}

\begin{remark}
	If one takes $\lambda\in\You_{n,k}$ with $\lambda_1<n-k$, then $\Sigma^{\lambda^\prime}\CU^*(-1)\simeq\Sigma^{\lambda}\CU^*$. This means that
\[
	\Ext^p(\Sigma^\lambda\CU^*,\Sigma^{\lambda^\prime}\CU^*(-1)) = 
	\Ext^p(\Sigma^\lambda\CU^*,\Sigma^\lambda\CU^*) = \begin{cases}
		\kk, & \text{if } p=0, \\
		0, & \text{otherwise},
	\end{cases}
\]
	the bundle $\Sigma^\lambda\CU^*$ being exceptional. Combining with Lemma~\ref{extlemma} one can state that there is always a unique canonical
	extension between the bundles $\Sigma^\lambda\CU^*$ and $\Sigma^{\lambda^\prime}\CU^*(-1)$, either in degree $0$ or $n-k$.

	Moreover, the composition
\[
	\Sigma^\lambda\CU^*\to\Sigma^{\lambda^\prime}\CU^*(-1)[d_1]\to \Sigma^{\lambda^{\prime\prime}}\CU^*(-2)[d_2]\to\ldots\to \Sigma^{\lambda^{(n)}}\CU^*(-n)[d_n]=\Sigma^\lambda\CU^*(-n)[k(n-k)]
\]
	is nontrivial in $\Ext^{k(n-k)}(\Sigma^\lambda\CU^*,\Sigma^\lambda\CU^*(-n)) \simeq \Hom(\Sigma^\lambda\CU^*, \Sigma^\lambda\CU^*)^*=\kk$.
\end{remark}

It turns out that the canonical extension from Lemma~\ref{extlemma} can be realized explicitly.

\begin{proposition}\label{les}
	Let $\lambda\in\You_{n,k}$ be a diagram with $\lambda_1=n-k$. Then there is a long exact sequence
\begin{equation}\label{eq:staircase}
	0\to\Sigma^{\lambda^\prime}\CU^*(-1)\to \Lambda^{\nu_{n-k}}V^*\otimes\Sigma^{\mu_{n-k}}\CU^*\to \ldots\to \Lambda^{\nu_2}V^*\otimes\Sigma^{\mu_2}\CU^*\to \Lambda^{\nu_1} V^*\otimes\Sigma^{\mu_1}\CU^*\to \Sigma^\lambda\CU^*\to 0
\end{equation}
for some $0\leq\nu_i< n$ and $\mu_i\in\You_{n,k}$ that will be described later.
\end{proposition}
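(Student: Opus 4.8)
The plan is to build the complex \eqref{eq:staircase} by resolving $\Sigma^\lambda\CU^*$ via Kapranov's strong exceptional collection $\{\Sigma^\alpha\CU\mid\alpha\in\You_{n,k}\}$ (equivalently, its dual $\{\Sigma^\alpha\CU^*\}$) and then recognizing all the terms explicitly. The key point is that each term of the complex should be of the form $\Lambda^{\nu_i}V^*\otimes\Sigma^{\mu_i}\CU^*$, i.e. a direct sum of copies of a single bundle $\Sigma^{\mu_i}\CU^*$. So first I would identify the diagrams $\mu_i$. Writing $\lambda=(n-k,\lambda_2,\dots,\lambda_k)$ and $\lambda'=(\lambda_2,\dots,\lambda_k,0)$, the natural guess is that the intermediate diagrams are obtained by ``peeling off'' the top row one column at a time: one takes $\mu_i$ to be the diagram with rows $(\lambda_2,\dots,\lambda_k)$ together with an extra row inserted, of length depending on $i$, so that $\mu_i\in\You_{n,k}$. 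Concretely the boundary maps will be built from the natural morphism $\Sigma^\lambda\CU^*\to V^*\otimes\Sigma^{\mu_1}\CU^*$ coming from the inclusion $\CU^\perp\hookrightarrow V^*\otimes\CO_X$ (equivalently the surjection $V\otimes\CO_X\to V/\CU$), iterated, with the exterior powers $\Lambda^{\nu_i}V^*$ appearing because of the Koszul-type syzygies among these maps.

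The cleanest route is the Borel--Bott--Weil / Beilinson spectral sequence of Theorem~\ref{specseq}. Apply it to the object $\CF^\bullet=\Sigma^\lambda\CU^*$. Then $E_1^{pq}=\bigoplus_{|\alpha|=-p}H^q(X,\Sigma^\lambda\CU^*\otimes\Sigma^{\alpha^*}\CU^\perp)\otimes\Sigma^\alpha\CU$, and since $\Sigma^\lambda\CU^*$ has no higher cohomology against the relevant bundles except in a single predictable degree, the $E_1$-page collapses to a single row. The surviving terms, after using Corollary~\ref{crl:coh} and Theorem~\ref{thm:bbw} to evaluate the cohomology groups $H^q(X,\Sigma^\lambda\CU^*\otimes\Sigma^{\alpha^*}\CU^\perp)$, are exactly the bundles $\Sigma^{\mu_i}\CU$ with multiplicity spaces that turn out to be exterior powers of $V$; dualizing (using $\Sigma^\beta\CU=\Sigma^{-\beta}\CU^*$) gives \eqref{eq:staircase}. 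One then has to check the two ends: the term $\Sigma^\lambda\CU^*$ itself appears at one end (the contribution of $\alpha=\emptyset$, or rather $\alpha=\lambda$ after the identification), and $\Sigma^{\lambda'}\CU^*(-1)$ appears at the other — here one uses the hypothesis $\lambda_1=n-k$ together with the computation already done in Lemma~\ref{extlemma}, which pins down that the extreme cohomology group is one-dimensional and sits in degree $n-k$, so the complex has length $n-k$.

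Alternatively, and perhaps more transparently, one can construct the complex by hand as an iterated cone: start from the canonical map $\Sigma^\lambda\CU^*\to V^*\otimes\Sigma^{\mu_1}\CU^*$, take the cone, and show inductively that the cone is quasi-isomorphic to a two-term-shorter complex of the same shape ending in $\Sigma^{\lambda'}\CU^*(-1)$, using at each stage a Borel--Bott--Weil computation (of the same flavor as in the proof of Proposition~\ref{prop:exc_b} and Lemma~\ref{extlemma}) to verify that no unwanted $\Ext$'s appear and that the map at each stage is the unique (up to scalar) nonzero one. The exactness on the nose — rather than just in the derived category — follows because Kapranov's collection is \emph{strong}, so $\D^b(X)$ is the homotopy category of complexes of sums of the $\Sigma^\alpha\CU$, and a complex with the prescribed cohomology sheaves (namely: acyclic) must be a genuine exact sequence of sheaves once we know each term is a single such bundle.

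The main obstacle I expect is purely bookkeeping of the Littlewood--Richardson / Borel--Bott--Weil computation: identifying precisely which diagram $\mu_i$ and which exponent $\nu_i$ occur at each step, and checking that at every intermediate stage the relevant tensor product $\Sigma^\lambda\CU\otimes\Sigma^{\alpha^*}\CU^\perp$ has cohomology concentrated in a single degree and of the right rank. The inputs are all already available — Lemma~\ref{lm:lrrule}, Corollary~\ref{crl:coh}, Theorem~\ref{thm:bbw}, Theorem~\ref{specseq}, and the rank-one computation of Lemma~\ref{extlemma} — so the difficulty is organizational rather than conceptual: one must set up the indexing of the $\nu_i,\mu_i$ (which the statement defers to ``will be described later'') so that the Weyl-group shuffles in Borel--Bott--Weil come out cleanly, and then verify the two boundary cases match $\Sigma^\lambda\CU^*$ and $\Sigma^{\lambda'}\CU^*(-1)$ as claimed.
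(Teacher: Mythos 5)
Your first route is essentially the paper's: the proof runs Kapranov's spectral sequence (Theorem~\ref{specseq}) together with Borel--Bott--Weil, finds exactly $n-k+1$ contributing diagrams, and reads off the long exact sequence. However, two points in your setup would derail the computation as written. First, the spectral sequence must be applied to $\Sigma^{\lambda^\prime}\CU(1)$ --- the dual of the left-hand end term of~\eqref{eq:staircase} --- and the resulting resolution by the bundles $\Sigma^\alpha\CU$ is then dualized. Applying it to $\Sigma^\lambda\CU^*$ directly, as you propose, resolves $\Sigma^\lambda\CU^*$ by multiples of $\Sigma^\alpha\CU$ with $\alpha\in\You_{n,k}$; rewritten via $\Sigma^\alpha\CU=\Sigma^{-\alpha}\CU^*$ these terms carry \emph{varying} twists and do not assemble into~\eqref{eq:staircase}, nor does dualizing that output help (it would resolve $\Sigma^\lambda\CU$, which is $\Sigma^{\lambda^c}\CU^*(k-n)$, not $\Sigma^{\lambda^\prime}\CU^*(-1)$). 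Second, the $E_1$-page does not collapse to a single row: the surviving terms sit at positions $(p_i,q_i)$ with both coordinates varying with $i$. What makes the argument work is the observation that the total degrees $p_i+q_i=i-(n-k)$ are pairwise distinct, so each antidiagonal carries exactly one term; combined with convergence to a single sheaf in degree $0$, this forces the differentials to string the terms into one long exact sequence. With these two corrections the remaining work is indeed the bookkeeping you describe: determining which entry of the Borel--Bott--Weil sequence may be omitted (this indexes the $\mu_i$) and identifying the multiplicity spaces as $\Lambda^{\nu_i}V$.
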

\begin{proof}
	It will be convenient to construct the dual long exact sequence. Let us apply Theorem~\ref{specseq} to the complex consisting of a single sheaf
	$\Sigma^{\lambda^\prime}\CU(1)$. One should compute
\[
E^{pq}_1=\bigoplus_{|\alpha|=-p}\HH^q(\Sigma^{\lambda^\prime}\CU(1) \otimes \Sigma^{\alpha^*}\CU^\perp)\otimes \Sigma^\alpha\CU.
\]

	First, let us find all $\alpha\in\You_{n,k}$ such that $\HH^\bullet(\Sigma^{\lambda^\prime}\CU(1) \otimes \Sigma^{\alpha^*}\CU^\perp)$
	is nonzero. The Borel--Bott--Weil theorem states that this is the case if and only if all the entries in the sequence
\[
	(n-\lambda^\prime_k+1, (n-1)-\lambda^\prime_{k-1}+1,\ldots, (n-k+1)-\lambda^\prime_1+1, n-k+\alpha^*_1, \ldots, 2+\alpha^*_{n-k-1}, 1+\alpha^*_{n-k})
\]
are distinct. Recall that $\lambda^\prime=(\lambda_2,\lambda_3,\ldots, \lambda_{n-k},0)$. Thus, one can rewrite the previous sequence as
\begin{equation}\label{bbwsq1}
	(n+1,n-\lambda_k,\ldots,n-k+2-\lambda_2,n-k+\alpha^*_1, \ldots, 2+\alpha^*_{n-k-1}, 1+\alpha^*_{n-k}).
\end{equation}

	Assume that all the entries in sequence~\eqref{bbwsq1} are distinct. Then after a suitable permutation they will form a strictly decreasing sequence. Since
	all the terms except the first are in $[1,n]$, the permuted sequence will be
\begin{equation}\label{eq:no_j}
	(n+1,n,\ldots,n-j+1,n-j-1,\ldots,1)
\end{equation}
	for some $0\leq j\leq n-1$. Moreover, $n-j\neq n-k+t-\lambda_t\Leftrightarrow j\neq k+\lambda_t-t$ for all $2\leq t\leq k$. Thus, we have $n-k+1$ choices for
	$j$ and each of the choices gives a unique $\alpha^*$ which is obtained by permuting sequence~\eqref{eq:no_j} in such a way that the first $k$ places
	are occupied by $n+1$ and $n-k+t-\lambda_t$ and the last $n-k$ entries decrease. Finally, remark that $\alpha^*=\lambda^*$ corresponds to $j=n-1$.
	
	This allows us to write down all the possible $\alpha$:
\begin{enumerate}
	\item[(0):] $\alpha^* = \mu_0 = \lambda^*$,
	\item[(i):] $\alpha^* = \mu_i = (\lambda^*_1,\ldots,\lambda^*_{n-k-i}, \lambda^*_{n-k-i+2}-1,\ldots,\lambda^*_{n-k}-1,0)$ for $i=1,\ldots,n-k$.
\end{enumerate}

	For each case we need the values $p$ and $q$. Recall that $p$ is equal to $-|\alpha|$ and $q$ is equal to the number of inversions in sequence~\eqref{bbwsq1}.
	These are easy to compute and one gets the following result:
\begin{enumerate}
	\item[(0):] $p_0=-|\lambda|,\qquad q_0=|\lambda|-(n-k)$,
	\item[(i):] $p_i=-|\lambda|+\lambda^*_{n-k+1-i}+(i-1),\qquad q_i=|\lambda|-(n-k)-(\lambda^*_{n-k+1-i}-1)$.
\end{enumerate}

	We immediately see that $p_i+q_i=-(n-k)+i$ are all distinct for $i=0,\ldots,n-k$. Combining with the fact that the spectral sequence converges
	to a single sheaf concentrated in degree $0$, we get a long exact sequence
\[
0\to\HH^{q_0}(\Sigma^{\lambda^\prime}\CU(1)\otimes\Sigma^{\mu_0}\CU^\perp)\otimes \Sigma^{\mu_0^*}\CU\to\ldots\to \HH^{q_{n-k}}(\Sigma^{\lambda^\prime}\CU(1)\otimes\Sigma^{\mu_0}\CU^\perp)\otimes \Sigma^{\mu_{n-k}^*}\CU\to \Sigma^{\lambda^\prime}\CU(1)\to 0.
\]

	Finally, let us compute $\HH^{q_i}(\Sigma^{\lambda^\prime}\CU(1)\otimes \Sigma^{\mu_i}\CU^\perp)$. Once we permute sequence~\eqref{bbwsq1} so that it
	becomes strictly decreasing, it will be of the form
\[
(n+1,n,\ldots,\widehat{n+1-\nu_i},\ldots,1).
\]
	Thus, applying the Borel--Bott--Weil theorem we get $\HH^{q_i}(\Sigma^{\lambda^\prime}\CU(1)\otimes \Sigma^{\mu_i}\CU^\perp)\simeq \Lambda^{\nu_i}V$. Finally, we note that $\mu_0=\lambda^*$ and $\nu_0=n$.
	As $\Lambda^n V\simeq\kk$, dualizing we get the desired long exact sequence. This finishes the proof.
\end{proof}

There is a nice combinatorial way to describe $\mu_i$ and $\nu_i$ from Proposition~\ref{les}. Given a diagram $\lambda\in\You_{n,k}$ with
$\lambda_1=n-k$, draw a stripe of width 1, as shown on Figure~\ref{fig:first}.

\begin{figure}
	\scalebox{1} 
	{
	\begin{pspicture}(0,-2.08)(6.74,2.1)
	\psframe[linewidth=0.02,dimen=outer](6.66,1.6)(0.66,-2.0)
	\psline[linewidth=0.04](6.66,1.6)(6.66,1.0)(4.86,1.0)(4.86,-0.2)(4.26,-0.2)(4.26,-0.8)(2.46,-0.8)(2.46,-1.4)(1.26,-1.4)(1.26,-2.0)(0.66,-2.0)
	\psline[linewidth=0.04,linestyle=dashed,dash=0.16cm 0.16cm](6.06,1.6)(4.26,1.6)(4.26,0.4)(3.66,0.4)(3.66,-0.2)(1.86,-0.2)(1.86,-0.8)(0.66,-0.8)(0.66,-1.4)(0.06,-1.4)(0.06,-2.0)
	\usefont{T1}{ptm}{m}{n}
	\rput(3.66,1.905){$n-k$}
	\usefont{T1}{ptm}{m}{n}
	\rput(0.37,-0.155){$k$}
	\usefont{T1}{ptm}{m}{n}
	\rput(5.22,0.225){$\lambda$}
	\usefont{T1}{ptm}{m}{n}
	\rput(2.84,0.125){$\lambda^\prime(-1)$}
	\psdots[dotsize=0.12](6.06,1.6)
	\psdots[dotsize=0.12](6.66,1.6)
	\psdots[dotsize=0.12](0.66,-2.0)
	\psdots[dotsize=0.12](0.06,-2.0)
	\end{pspicture} 
	}
	\caption{}\label{fig:first}
\end{figure}

Now, $\mu_i$ is pictured on Figure~\ref{fig:mui} by the solid line. One takes the path $\lambda$ going from left to right and ``jumps'' upward on the path
$\lambda^\prime(-1)$ in the point with abscissa $n-k-i$.

\begin{figure}[h]
	\scalebox{1} 
	{
	\begin{pspicture}(0,-2.41)(6.68,2.41)
	\definecolor{color132b}{rgb}{0.8,0.8,0.8}
	\pspolygon[linewidth=0.02,linecolor=white,fillstyle=solid,fillcolor=color132b](6.6,1.8)(6.6,1.2)(4.8,1.2)(4.8,0.0)(4.2,0.0)(4.2,-0.6)(3.6,-0.6)(3.6,0.6)(4.2,0.6)(4.2,1.8)
	\psframe[linewidth=0.02,dimen=outer](6.6,1.8)(0.6,-1.8)
	\usefont{T1}{ptm}{m}{n}
	\rput(2.09,2.065){$n-k-i$}
	\usefont{T1}{ptm}{m}{n}
	\rput(0.31,0.045){$k$}
	\psdots[dotsize=0.12](6.6,1.8)
	\psdots[dotsize=0.12](0.6,-1.8)
	\psline[linewidth=0.04,linestyle=dashed,dash=0.16cm 0.16cm](3.6,-0.6)(4.2,-0.6)(4.2,0.0)(4.8,0.0)(4.8,1.2)(6.6,1.2)(6.6,1.8)
	\psline[linewidth=0.04,linestyle=dashed,dash=0.16cm 0.16cm](3.6,0.0)(1.8,0.0)(1.8,-0.6)(0.6,-0.6)(0.6,-1.2)(0.0,-1.2)(0.0,-1.8)
	\usefont{T1}{ptm}{m}{n}
	\rput(2.72,-1.195){$\mu_i$}
	\psline[linewidth=0.04](6.6,1.8)(4.2,1.8)(4.2,0.6)(3.6,0.6)(3.6,0.0)(3.6,-0.6)(2.4,-0.6)(2.4,-1.2)(1.2,-1.2)(1.2,-1.8)(0.6,-1.8)
	\psline[linewidth=0.02,fillcolor=color132b,linestyle=dashed,dash=0.16cm 0.16cm](3.6,2.4)(3.6,-2.4)
	\usefont{T1}{ptm}{m}{n}
	\rput(5.09,2.065){$i$}
	\end{pspicture} 
	}
	\caption{}\label{fig:mui}
\end{figure}

The number $\nu_i$ is the number of boxes one needs to remove from $\lambda$ in order to get $\mu_i$. These a pictured in gray on Figure~\ref{fig:mui}.


\subsection{Fullness} 
\label{sub:fullness}

In this section we will finish the proof of Theorem~\ref{theorem:brav}. We have already shown semi-orthogonality in Proposition~\ref{prop:exc_b}. In order to
show fullness it is convenient to introduce another Lefschetz decomposition.

Consider the categories
\[
	\CBp_i = \left<\Sigma^\lambda\CU^*\mid \lambda\in\Youl_{n,k},\ i<l(\lambda)\right>.
\]

\begin{example}
	In the case $X=\Gr(3,6)$ we have
\[
	\begin{array}{l}
		\CBp_0=\CBp_1=\left<\CO_X(3),\ \Lambda^2\CU^*(2),\ \CU^*(2),\ \Sigma^{(3,3,1)}\CU^*,\ \Sigma^{(3,2,1)}\CU^*\right> \\
		\CBp_2=\CBp_3=\left<\CO_X(3),\ \Lambda^2\CU^*(2),\ \CU^*(2)\right> \\
		\CBp_4=\CBp_5=\left<\CO_X(3),\ \Lambda^2\CU^*(2)\right> \\
	\end{array}
\]
\end{example}

We will need the following observation.

\begin{lemma}\label{lemma:b_prime}
	There is a Lefschetz decomposition
\[
	\left<\CB_0,\CB_1(1),\ldots, \CB_{n-1}(n-1)\right>=\D^b(X)
\]
	if and only if there is a decomposition 
\begin{equation}\label{eq:bp}
	\left<\CBp_{n-1}(1-n),\ldots,\CBp_1(-1),\CB_0\right>=\D^b(X).
\end{equation}
\end{lemma}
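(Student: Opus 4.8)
The statement is an equivalence between two fullness claims, so the natural approach is to produce an explicit duality (auto-)equivalence of $\D^b(X)$ that intertwines the two decompositions, up to reversing the order of the blocks. The obvious candidate is the composition of the derived dualization functor $\RCHom(-,\CO_X)$ with a twist by $\CO_X(1-n)$; equivalently, one may work directly with $\mathbb D:=\RCHom(-,\CO_X)$ and keep track of twists at the end. The key point, already recorded in the excerpt, is the identity $\Sigma^\lambda\CU=\Sigma^{-\lambda}\CU^*$, together with the fact that $\mathbb D$ is a contravariant equivalence, hence sends a semi-orthogonal decomposition $\langle\CC_1,\dots,\CC_N\rangle$ to $\langle\mathbb D\CC_N,\dots,\mathbb D\CC_1\rangle$.

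First I would compute the effect of $\mathbb D$ composed with an appropriate line-bundle twist on the generators $\Sigma^\lambda\CU^*$ of the blocks $\CB_i$. We have $\mathbb D(\Sigma^\lambda\CU^*)=\Sigma^\lambda\CU=\Sigma^{-\lambda}\CU^*$, and twisting by $\CO_X(n-k-\text{something})$ turns $-\lambda$ into its complement $\lambda^c=(-\lambda)(n-k)$; more precisely one checks that the functor $F:=\mathbb D(-)\otimes\CO_X(\text{appropriate power})$ sends $\Sigma^\lambda\CU^*(t)$ to $\Sigma^{\lambda^c}\CU^*(\text{shifted }t)$. The combinatorial input is the pair of facts stated earlier in Section~\ref{sec:combinatorics_of_young_diagrams}: taking the complement interchanges $\Youu_{n,k}$ with $\Youl_{n,k}$, and it exchanges the two length functions, $l(\lambda)=r(\lambda^c)$. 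So $F$ sends the block $\CB_i=\langle\Sigma^\lambda\CU^*\mid\lambda\in\Youu_{n,k},\ i<r(\lambda)\rangle$ to $\langle\Sigma^{\mu}\CU^*\mid\mu\in\Youl_{n,k},\ i<l(\mu)\rangle=\CBp_i$, up to the overall twist. Tracking the twists on the full decomposition $\langle\CB_0,\CB_1(1),\dots,\CB_{n-1}(n-1)\rangle$, applying the contravariant $F$ reverses the order of the blocks and shifts all twists, producing exactly $\langle\CBp_{n-1}(1-n),\dots,\CBp_1(-1),\CBp_0(0)\rangle$; and since $\CBp_0=\CB_0$ after accounting for the twist normalization (or one simply renames it), this is~\eqref{eq:bp}.

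The remaining step is purely formal: an exact contravariant equivalence of triangulated categories sends a full subcategory onto a full subcategory, so $\langle\CB_0,\CB_1(1),\dots,\CB_{n-1}(n-1)\rangle=\D^b(X)$ if and only if its image under $F$ equals $F(\D^b(X))=\D^b(X)$, which is exactly the right-hand decomposition in~\eqref{eq:bp}. Hence the two fullness statements are equivalent.

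\textbf{Main obstacle.} The genuinely delicate part is bookkeeping of the line-bundle twists and of the identification $\CBp_0=\CB_0$. One must verify that, with the correct normalization of the dualization-plus-twist functor, the complement map on diagrams is compatible with $r\leftrightarrow l$ and that the total twist on $\CB_i(i)$ lands on $\CBp_i(-i)$ (equivalently $\CBp_{n-1-j}(1-n+j)$ after reversing), so that the reversed, twisted image of the $\CB$-decomposition is literally the $\CBp$-decomposition in~\eqref{eq:bp}. I would do this by checking the action of $F$ on a single generator $\Sigma^\lambda\CU^*(t)$ and then invoking the already-established formula $(\lambda')^c=(\lambda^c)^{(n-1)}$ and the symmetry of $r$ and $l$ under complementation; everything else is the standard fact that contravariant equivalences reverse semi-orthogonal decompositions.
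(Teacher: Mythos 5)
Your proposal is correct and is essentially the paper's own argument: the proof consists of observing that dualization composed with the twist by $\CO_X(n-k)$ is an anti-auto-equivalence reversing semi-orthogonal decompositions, and that $\CBp_i=\CB_i^*(n-k)$, which follows from exactly the combinatorial facts you cite ($\lambda\mapsto\lambda^c$ swaps $\Youu_{n,k}$ with $\Youl_{n,k}$ and $l(\lambda^c)=r(\lambda)$). Your remark about the last block is also apt — the right-hand side should read $\CBp_0$ rather than $\CB_0$, consistent with Proposition~\ref{prop:bp_full}.
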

\begin{proof}
	The duality functor is an anti-auto-equivalence of categories, and the twist functor by $\CO_X(n-k)$ is an auto-equivalence. Composing these functors one gets
	an anti-auto-equivalence that preserves fullness and inverts orthogonality relations. It is left to note that $\CBp_i = \CB_i^*(n-k)$.
\end{proof}

Due to the previous lemma it will be enough to show that the decomposition~\eqref{eq:bp} is full in order to prove fullness of the decomposition $\CB_i(i)$.

\begin{proposition}\label{prop:bp_full}
	The decomposition $\CBp_i(-i)$ is full. In other words,
\[
	\left<\CBp_{n-1}(1-n),\ldots,\CBp_1(-1),\CBp_0\right> = \D^b(X).
\]
\end{proposition}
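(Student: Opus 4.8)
The plan is to show that the triangulated subcategory $\CD\subset\D^b(X)$ generated by $\left\{\Sigma^\lambda\CU^*(-i)\mid \lambda\in\Youl_{n,k},\ i<l(\lambda)\right\}$ — i.e.\ the right-hand side of the displayed equation — contains every object in Kapranov's full exceptional collection $\left\{\Sigma^\mu\CU^*\mid \mu\in\You_{n,k}\right\}$, which by the theorem of Kapranov recalled in Section~\ref{sub:kapranov_s_exceptional_collections} forces $\CD=\D^b(X)$. The key device is the staircase long exact sequence of Proposition~\ref{les}: for any $\lambda\in\You_{n,k}$ with $\lambda_1=n-k$ it expresses $\Sigma^{\lambda^\prime}\CU^*(-1)$ (or, after twisting, $\Sigma^{\lambda}\CU^*$) as built out of the bundles $\Sigma^{\mu_i}\CU^*$ with $\mu_i\prec\lambda$, tensored with trivial bundles $\Lambda^{\nu_i}V^*$. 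Twisting the staircase sequences by powers of $\CO_X(1)$ gives, for every $\lambda$ and every $j$, a way to rewrite $\Sigma^{\lambda^{(j)}}\CU^*(-j)$ (the $j$-fold cyclic shift, with the corresponding twist) in terms of $\Sigma^\mu\CU^*(-j+?)$ for smaller diagrams. In other words, the cyclic shift action on $\You_{n,k}$ from Section~\ref{sec:combinatorics_of_young_diagrams}, combined with twists, is exactly the combinatorial bookkeeping that the staircase complexes implement geometrically.

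Concretely, I would argue by induction. Every orbit of the cyclic action contains a lower triangular element (this is the dual of the Lemma stating that every orbit contains an upper triangular element, since $\lambda\mapsto\lambda^c$ swaps the two notions), and in fact one shows that for $\lambda\in\Youl_{n,k}$ the generators $\Sigma^\lambda\CU^*(-i)$ for $0\le i<l(\lambda)$ together with their continuations $\Sigma^{\lambda^{(i)}}\CU^*(-i)$ for $l(\lambda)\le i<n$ — where now $\lambda^{(i)}$ leaves $\Youl_{n,k}$ — sweep out, as $\lambda$ runs over $\Youl_{n,k}$ and $i$ over $0,\dots,n-1$, a complete set of pairs $(\mu,j)$ needed to reconstruct Kapranov's collection. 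The induction is on the diagrams ordered by the partial inclusion order $\preceq$ (equivalently, by $|\mu|$): the base case is $\mu=\varnothing$, for which $\CO_X$ is visibly among the generators (the empty diagram is lower triangular with $l(\varnothing)=n$), and for the inductive step, given $\Sigma^\mu\CU^*$ with $\mu$ not of the required "boundary" form, one applies a twisted staircase sequence of Proposition~\ref{les} to express it through $\Lambda^{\nu_i}V^*\otimes\Sigma^{\mu_i}\CU^*(*)$ with $\mu_i\prec\mu$, all of which lie in $\CD$ by the inductive hypothesis; since $\CD$ is triangulated and closed under direct summands and tensoring by the trivial bundles $\Lambda^{\nu_i}V^*$, we get $\Sigma^\mu\CU^*\in\CD$.

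The main obstacle — and the place where the combinatorics of Section~\ref{sec:combinatorics_of_young_diagrams} (the functions $l(\lambda)$, $d(\lambda)$, $e(\lambda)$ and the behaviour of $\oplus$ under the orders) really earns its keep — is the bookkeeping that guarantees the induction actually terminates and covers \emph{exactly} the $\binom{n}{k}$ pairs $(\mu,j)$ that Kapranov's collection requires, with no pair used twice and none missed. One must check that: (a) starting from any $\mu\in\You_{n,k}$, iterating "shift and use a staircase sequence" eventually lands inside the explicitly listed generators of $\CBp_i(-i)$; (b) the twist exponents match up so that what we produce is genuinely $\Sigma^\mu\CU^*$ and not some twist of it; and (c) the inductive order is well-founded, for which the key point is that each application of Proposition~\ref{les} strictly decreases $|\mu|$ among the nontrivial terms. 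I expect (a) to be the delicate part: it amounts to verifying that the orbit of $\mu$ under the shift, traversed in the correct direction, meets the lower triangular region, and that the number $l(\lambda)$ of the lower triangular representative is precisely the number of consecutive twists for which the bundle stays "on the diagonal side" — which is how $l(\lambda)$ was defined. Once this dictionary between the geometry of the staircase complexes and the path combinatorics on the rectangle is set up carefully, the fullness statement follows formally.
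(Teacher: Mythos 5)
Your overall strategy is the same as the paper's: dualize Kapranov's collection and resolve each $\Sigma^\lambda\CU^*$ by iterating the staircase complexes of Proposition~\ref{les} until only terms $\Sigma^\mu\CU^*(-t)$ with $\mu\in\Youl_{n,k}$ and $t<l(\mu)$ remain. But the two points you defer or gloss over are the entire content of the proof, and the one mechanism you do commit to is wrong. You propose to make the recursion well-founded by inducting on $|\mu|$ (equivalently on $\preceq$), asserting that each application of Proposition~\ref{les} strictly decreases $|\mu|$. This fails: Proposition~\ref{les} only applies to diagrams with $\lambda_1=n-k$, so before re-applying it to a term $\Sigma^{\mu}\CU^*(-t)$ you must renormalize, replacing $\mu$ by $\tilde{\mu}=\mu(n-k-\mu_1)$ and absorbing the difference into the twist; this renormalization \emph{increases} $|\mu|$ by $k(n-k-\mu_1)$ and can easily overshoot $|\lambda|$. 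Concretely, in $\Gr(3,6)$ the staircase complex for $(3,2,1)$ produces the terms $\Sigma^{(3,3,2)}\CU^*(-1)$, $\Sigma^{(3,3,3)}\CU^*(-2)$, $\Sigma^{(3,2,2)}\CU^*(-2)$, whose (normalized) diagrams all have more boxes than $(3,2,1)$. The invariant that actually decreases is not the size of the diagram but the function $d(\lambda)$ of Section~\ref{sub:orders} (the number of cyclic shifts needed to reach the lower-triangular region): one checks that every term produced by a staircase step has $d$ at most $\max\{d(\lambda)-1,0\}$, so after $d(\lambda)$ iterations all diagrams are lower triangular. Without identifying a genuinely decreasing quantity of this kind, nothing rules out an infinite regress.

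The second essential verification — your point (b)/(a), that the accumulated twists land in the admitted range $0\le t<l(\mu)$ — is also left as a hope. The paper controls it by showing that every pair $(\mu,t)$ produced from $\lambda$ satisfies $t\le e(\mu)-e(\lambda)$ (with $e$ the distance to the leftmost nontrivial l-admissible vertex), which propagates through the iteration and, once $\mu$ is lower triangular (so $e(\mu)=l(\mu)$), gives $t<l(\mu)$ because $e(\lambda)>0$. Two smaller inaccuracies: the base of your induction is misidentified ($\varnothing$ is \emph{upper}, not lower, triangular; $\CO_X$ enters as $\Sigma^{(n-k,\dots,n-k)}\CU^*(-(n-k))$, a twist of the full rectangle, which is the lower-triangular diagram with $l=n$); and there is no need to hit ``exactly the $\binom{n}{k}$ pairs with no pair used twice'' — fullness requires only generation, semi-orthogonality having been proved separately in Proposition~\ref{prop:exc_b}. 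So: right scaffolding, but the load-bearing combinatorial estimates ($d$ decreases; $t\le e(\mu)-e(\lambda)$) are missing, and the substitute offered for the first of them is false.
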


\begin{proof}
	We recall that the definitions of $r(\lambda)$, $l(\lambda)$, $d(\lambda)$ and $e(\lambda)$ are given in the end of Section~\ref{sub:orders}.
	
	Given a diagram $\lambda$ such that $\lambda(t)\in\You_{n,k}$ for some $t$, let $\tilde{\lambda}$ denote the diagram $\lambda(-\lambda_1+n-k)$. It will be convenient to
	dualize Kapranov's collection and work with $\left(\Sigma^\lambda\CU^*\mid\lambda\in\You_{n,k}\right)$. Once again, the duality functor is an anti-auto-equivalence, thus, it is
	a full exceptional collection in $\D^b(X)$.
	
	It will be enough to show that every object from $\left(\Sigma^\lambda\CU^*\mid\lambda\in\You_{n,k}\right)$ has a resolution with all the terms being direct sums of vector bundles
	$\Sigma^\mu\CU^*(-t)$, where $\mu\in\Youl_{n,k}$ and $0\leq t<l(\mu)$.
	
	Given a vector bundle $\Sigma^\lambda\CU^*(-t)$ with $\lambda\in\You_{n,k}$, one can twist it by $\CO_X(t+n-k-\lambda_1)$ and get a vector bundle satisfying
	the conditions of Proposition~\ref{les}. Twisting back the long exact sequence~\eqref{eq:staircase}, one gets a resolution of the original vector bundle. Informally speaking,
	given a vector bundle $\Sigma^{\lambda}\CU^*$ with $\lambda\in\You_{n,k}$ we would like to take this resolution. Replace every term that is not of the form
	$\Sigma^\mu\CU^*(-t)$ for some $\mu\in\Youl_{n,k}$ with its resolution and repeat this process until it terminates. Now we can fold all this ``multicomplex'' into a giant
	resolution. However, we need to show that the process actually terminates and that the bundles $\Sigma^\mu\CU^*$ with $\mu\in\Youl_{n,k}$ arise with appropriate twists.
	Let us formalize this argument.
	
	For each $\lambda\in\You_{n,k}$, such that $\lambda_1=n-k$ and $\lambda\notin \Youl_{n,k}$, consider the set
\[
	\mathrm{Exp}(\lambda) = \left\{(\tilde{\mu}_1, t_1),\ \ldots,\  (\tilde{\mu}_{n-k}, t_{n-k}),\ (\tilde{\lambda^\prime},\ t_{n-k+1})\right\},
\]
	where $t_i=(n-k-\mu_{i,1})$ for $1\leq i\leq n-k$ and $t_{n-k+1}=(n-k-\lambda^\prime_1)+1$. Define
	$\mathrm{Exp}(\lambda)=\left\{(\lambda,0)\right\}$ for $\lambda\in\Youl_{n,k}$ and denote $\mathrm{Exp^{(1)}}(\lambda)=\mathrm{Exp}(\lambda)$. Further, define
\[
	\mathrm{Exp^{(k+1)}}(\lambda)=\bigcup_{(\mu, t)\in \mathrm{Exp^{(k)}}(\lambda)}\mathrm{Exp}(\mu)[t],
\]
	where $\left\{(\mu_i,t_i)\right\}_{i\in I}[t]=\left\{(\mu_i,t_i+t)\right\}_{i\in I}$.
	
	Note that
\[
	\Sigma^\lambda\CU^*\in\left<\Sigma^\mu\CU^*(-t)\right>_{(\mu,t)\in \mathrm{Exp}(\lambda)}
\]
	by Proposition~\ref{les}. By induction we also have
\[
	\Sigma^\lambda\CU^*\in\left<\Sigma^\mu\CU^*(-t)\right>_{(\mu,t)\in \mathrm{Exp^{(k)}}(\lambda)}
\]
	for all $k\geq 1$.
	
	We claim that
\begin{enumerate}
	\item for some $k\geq 1$ one has $\mathrm{Exp^{(k)}}(\lambda)\subset \Youl_{n,k}\times\ZZ$,
	\item if $\mathrm{Exp^{(k)}}(\lambda)\subset\Youl_{n,k}\times\ZZ$, then for each $(\mu,t)\in\mathrm{Exp^{(k)}}(\lambda)$ one has $t<l(\mu)$.
\end{enumerate}

	For (1) we note that
\[
	\max\{d(\mu)\}_{(\mu,t)\in\mathrm{Exp^{(k)}}}\leq \max\{d(\lambda)-1, 0\}.
\]
	It follows that for all $(\mu,t)\in\mathrm{Exp^{d(\lambda)}}(\lambda)$ we have $d(\mu)\leq 0$, hence $\mu\in\Youl_{n,k}$.
	
	For (2) we note that for each $(\mu,t)\in\mathrm{Exp}(\lambda)$ we have $t\leq e(\mu)-e(\lambda)$. By induction this also holds for each
	$(\mu,t)\in\mathrm{Exp^{(k)}}(\lambda)$. Now, if $\mathrm{Exp^{(k)}}(\lambda) \subset \Youl_{n,k}\times\ZZ$ then
\[
	t\leq e(\mu)-e(\lambda)=l(\mu)-e(\lambda)<l(\mu),
\]
	as by definition $e(\lambda)>0$.
	
	We have shown that $\Sigma^\lambda\CU^*\in \left<\CBp_{n-1}(1-n),\ldots,\CBp_1(-1),\CBp_0\right>$ for all $\lambda\in \You_{n,k}$, $\lambda_1=n-k$. For an arbitrary $\lambda\in\You_{n,k}$
	note that
\[
	\Sigma^\lambda\CU^*\in\left<\Sigma^\mu\CU^*(-t)\mid {(\mu,t)\in \mathrm{Exp^{(k)}}(\tilde{\lambda})[n-k-\lambda_1]}\right>.
\]
	
	We need to show that for all $(\mu,t)\in\mathrm{Exp^{d(\tilde{\lambda})}}(\tilde{\lambda})$ one has $t+(n-k-\lambda_1)<l(\mu)$. We already know that $t\leq l(\mu)-e(\tilde{\lambda})$. It is easy to see that
\[
	e(\tilde{\lambda}) = e(\lambda(n-k-\lambda_1)) = e(\lambda)+(n-k-\lambda_1) > (n-k-\lambda_1),
\]
	as $e(\lambda)>0$. Therefore,
\[
	t\leq l(\mu)-e(\tilde{\lambda})=l(\mu)-e(\lambda)-(n-k-\lambda_1) < l(\mu)-(n-k-\lambda_1).
\]
	This completes the proof.
\end{proof}

\begin{proof}[Proof of Theorem~\ref{theorem:brav}]
	Semi-orthogonality was proved in Proposition~\ref{prop:exc_b}. Combining Lemma~\ref{lemma:b_prime} and Proposition~\ref{prop:bp_full} we get fullness.
\end{proof}

Finally, let us return to our main example.

\begin{proposition}
	Conjecture~\ref{conj:full} holds for $X=\Gr(3,6)$.
\end{proposition}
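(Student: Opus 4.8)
The plan is to deduce the statement from the fullness of the decomposition $\CB_i$ proved in Theorem~\ref{theorem:brav}. Unwinding that theorem for $X=\Gr(3,6)$, the category $\D^b(X)$ is generated by the twenty bundles $\CO_X(i),\CU^*(i)$ for $0\le i\le 5$, $\Lambda^2\CU^*(i)$ for $0\le i\le 3$, $\Sigma^{(2,1,0)}\CU^*(i)$ for $i=0,1$, and $S^2\CU^*(i)$ for $i=0,1$ (the Lefschetz basis with its twists, according to the support function $(6,6,4,2,2)$ of Example~\ref{ex:gr36brav}). On the other hand $\CA:=\langle\CA_0,\CA_1(1),\ldots,\CA_5(5)\rangle$ contains by construction $\CO_X(i),\CU^*(i),\Lambda^2\CU^*(i)$ for $0\le i\le 5$ and $\Sigma^{(2,1,0)}\CU^*(i)$ for $i=0,1$. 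Comparing the two lists, every generator of $\D^b(X)$ produced by the $\CB$-decomposition already lies in $\CA$, with the sole exception of $S^2\CU^*$ and $S^2\CU^*(1)$. Hence it suffices to prove $S^2\CU^*\in\CA$ and $S^2\CU^*(1)\in\CA$: then $\CA\supseteq\D^b(X)$, so $\CA=\D^b(X)$ and the $\CA_i$ form a Lefschetz decomposition (which, since $k=3$ is prime, is then also minimal by the Proposition above).

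To reach these two memberships I would use the staircase complexes of Proposition~\ref{les} as exact five-term sequences of vector bundles: if $0\to E_4\to E_3\to E_2\to E_1\to E_0\to 0$ is exact, then each $E_j$ lies in the triangulated subcategory generated by the other four. The staircase of $\lambda=(3,2,0)$, whose shift is $\lambda'=(2,0,0)$, reads
\[
0\to S^2\CU^*(-1)\to\Lambda^4V^*\otimes\CU^*\to\Lambda^3V^*\otimes\Lambda^2\CU^*\to V^*\otimes\Sigma^{(2,2,0)}\CU^*\to\Sigma^{(3,2,0)}\CU^*\to 0 ;
\]
twisting by $\CO_X(1)$ and by $\CO_X(2)$ places $S^2\CU^*$ and $S^2\CU^*(1)$ respectively in the subcategory generated by $\CU^*(t),\Lambda^2\CU^*(t),\Sigma^{(2,2,0)}\CU^*(t),\Sigma^{(3,2,0)}\CU^*(t)$ with $t\in\{1,2\}$. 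Since $\CU^*(t)$ and $\Lambda^2\CU^*(t)$ are already generators of $\CA$, the whole problem reduces to showing $\Sigma^{(2,2,0)}\CU^*(1),\Sigma^{(2,2,0)}\CU^*(2),\Sigma^{(3,2,0)}\CU^*(1),\Sigma^{(3,2,0)}\CU^*(2)\in\CA$. The first two I would obtain by twisting the staircase of $(3,2,2)$ (shift $(2,2,0)$) by $\CO_X(2)$ and $\CO_X(3)$: its interior terms are built from $\CO_X,\CO_X(1),\Lambda^2\CU^*$ and its right-hand end is $\CU^*(2)$, so after twisting all five terms except $\Sigma^{(2,2,0)}\CU^*(1)$, resp. $\Sigma^{(2,2,0)}\CU^*(2)$, are of the form $\CO_X(t),\CU^*(t),\Lambda^2\CU^*(t)$ with $t\le5$, hence generators of $\CA$. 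The last two I would obtain by twisting the staircase of $(3,3,2)$ (shift $(3,2,0)$) by $\CO_X(2)$ and $\CO_X(3)$; here the terms other than $\Sigma^{(3,2,0)}\CU^*(1)$, resp. $\Sigma^{(3,2,0)}\CU^*(2)$, are of the form $\CO_X(t),\CU^*(t),\Lambda^2\CU^*(t)$ with $t\le5$ together with $\Sigma^{(2,1,0)}\CU^*(t)$ for $t\in\{2,3\}$.

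The only genuinely delicate point — and the one I expect to be the main obstacle — is that $\Sigma^{(2,1,0)}\CU^*$ has support function value $2$, so it is a generator of $\CA$ only at twists $0$ and $1$, while the previous step needs it at twists $2$ and $3$. I would handle this first, using the staircase of $(3,2,1)$, whose shift is $(2,1,0)$: both of its extreme terms are then twists of $\Sigma^{(2,1,0)}\CU^*$, and twisting by $\CO_X(t)$ gives
\[
\Sigma^{(2,1,0)}\CU^*(t+1)\in\big\langle\,\Sigma^{(2,1,0)}\CU^*(t-1),\ \CU^*(t),\ \CO_X(t+1),\ \Lambda^2\CU^*(t+1)\,\big\rangle .
\]
Applying this for $t=1$ and $t=2$ (each time the $(t-1)$-term, $\Sigma^{(2,1,0)}\CU^*(0)$ or $\Sigma^{(2,1,0)}\CU^*(1)$, is a generator of $\CA$) puts $\Sigma^{(2,1,0)}\CU^*(2)$ and $\Sigma^{(2,1,0)}\CU^*(3)$ into $\CA$, after which the chain of the previous paragraph closes up and both $S^2\CU^*$ and $S^2\CU^*(1)$ land in $\CA$. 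What remains is mechanical: one must read off the explicit interior terms $\Lambda^{\nu_i}V^*\otimes\Sigma^{\mu_i}\CU^*$ of the four staircase complexes for $\lambda=(3,2,0),(3,2,2),(3,3,2),(3,2,1)$ from the combinatorial recipe following Proposition~\ref{les} and verify that after the indicated twists all bundles fall in the ranges claimed above; this is a bounded computation involving at most $5\times4$ bundles.
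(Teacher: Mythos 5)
Your argument is correct; I checked the four staircase complexes you invoke (for $\lambda=(3,2,0)$, $(3,2,2)$, $(3,3,2)$ and $(3,2,1)$ the interior terms $\Lambda^{\nu_i}V^*\otimes\Sigma^{\mu_i}\CU^*$ are exactly as you describe), and the chain of memberships closes as claimed. In spirit this is the paper's proof: both arguments bootstrap off the already-established fullness (Theorem~\ref{theorem:brav} for you, Proposition~\ref{prop:bp_full} in the paper), identify the finitely many twisted bundles not covered by the generators of the $\CA$-decomposition, and kill them with staircase complexes --- and the crucial step in both is the staircase of $(3,2,1)$, whose extreme terms are $\Sigma^{(2,1,0)}\CU^*(-1)$ and $\Sigma^{(2,1,0)}\CU^*(1)$, so that it propagates $\Sigma^{(2,1,0)}\CU^*$ to the extra twists $2$ and $3$. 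The only real difference is organizational: the paper first applies the duality anti-equivalence of Lemma~\ref{lemma:b_prime} to pass to lower-triangular diagrams, where the $\mathrm{Exp}$ bookkeeping already set up in Proposition~\ref{prop:bp_full} isolates $(3,3,1)$ and then reduces everything to that single staircase of $(3,2,1)$, whereas you stay on the upper-triangular side and must chain four staircases. Both routes are valid; yours costs a bit more explicit computation but avoids re-entering the $\mathrm{Exp}$ machinery and makes the comparison with the $\CB$-decomposition of Example~\ref{ex:gr36brav} completely transparent.
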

\begin{proof}
	An argument similar to Proposition~\ref{lemma:b_prime} shows that it is enough to check that
\[
	\D^b(X)=\left<\Sigma^\lambda\CU^*(-t)\mid\lambda\in\Youml_{n,k},\ t< o(\lambda)\right>,
\]
	where $\Youml_{n,k}=\left\{\lambda\mid\lambda^c\in\Youmu_{n,k}\right\}$.
	
	In our case
\[
	\Youl_{6,3}\setminus\Youml_{6,3}=\{(3,3,1)\}.
\]

	We already know from Proposition~\ref{prop:bp_full} that
\[
	\D^b(X)=\left<\Sigma^\lambda\CU^*(-t)\mid\lambda\in\Youl_{n,k},\ t< l(\lambda)\right>.
\]
	Thus, it will be enough to show that
\begin{equation}\label{eq:322}
	\Sigma^{(3,3,1)}\CU^*, \Sigma^{(3,3,1)}\CU^*(-1)\in \left< \Sigma^\lambda\CU^*(-t)\mid\lambda\in\Youml_{n,k},\ t< o(\lambda)\right>.
\end{equation}

	Like in the proof of Proposition~\ref{prop:bp_full} consider the sets $\mathrm{Exp^{(k)}}$, but this time $\mathrm{Exp}(\lambda)=\{(\lambda, 0)\}$ only
	for $\lambda\in\Youml_{n,k}$. Then it is easy to check that $E=\mathrm{Exp^{(2)}}((3,3,1))\subset\Youml_{6,3}\times\ZZ$. However,
	the sets $E$ and $E(1)$ contain bad elements $((3,2,1),2)$ and $((3,2,1),3)$ for which $o((3,2,1))=2$.
	
	In order to show~\eqref{eq:322} it is enough to check that
\begin{equation}\label{eq:321}
	\Sigma^{(3,2,1)}\CU^*(-2),\Sigma^{(3,2,1)}\CU^*(-3)\in \left<\Sigma^\lambda\CU^*(-t)\mid\lambda\in\Youml_{n,k},\ t< o(\lambda)\right>.
\end{equation}
	
	Consider the long exact sequence~\eqref{eq:staircase} for $\Sigma^{(3,2,1)}\CU^*$
\begin{multline*}
	0\to\Sigma^{(3,2,1)}\CU^*(-2)\to\Lambda^5V^*\otimes\Sigma^{(3,2,2)}\CU^*(-2)\to \\
	\to \Lambda^3V^*\otimes\Sigma^{(3,3,3)}\CU^*(-2)\to V^*\otimes
	\Sigma^{(3,3,2)}\CU^*(-1)\to\Sigma^{(3,2,1)}\CU^*\to 0
\end{multline*}
	as a right resolution for $\Sigma^{(3,2,1)}\CU^*(-2)$. We immediately see that~\eqref{eq:321} holds.
\end{proof}




\def\cprime{$'$}



\end{document}